\colorlet{MyBlue}{DodgerBlue!75!Black}
\colorlet{MyGreen}{DarkGreen!95!Black}
\newcommand{\EMAIL}[1]{\email{\href{mailto:#1}{#1}}}
\numberwithin{equation}{section}  
\newcommand{\dd}{\:d}
\newcommand{\eps}{\varepsilon}
\newcommand{\dif}{\dd}
\newcommand{\mgeq}{\succcurlyeq}
\DeclareMathOperator*{\argmin}{argmin}
\DeclareMathOperator{\dist}{dist}
\DeclareMathOperator{\tr}{tr}
\DeclareMathOperator{\Id}{Id}
\newcommand{\eqdef}{\triangleq}
\newcommand{\const}{\mathtt{c}}
\newcommand{\ta}{\mathtt{a}}
\newcommand{\scrA}{\mathcal{A}}
\newcommand{\scrB}{\mathcal{B}}
\newcommand{\scrE}{\mathcal{E}}
\newcommand{\scrF}{\mathcal{F}}
\newcommand{\scrG}{\mathcal{G}}
\newcommand{\scrI}{\mathcal{I}}
\newcommand{\scrJ}{\mathcal{J}}
\newcommand{\scrL}{\mathcal{L}}
\newcommand{\scrQ}{\mathcal{Q}}
\newcommand{\scrX}{\mathcal{X}}
\renewcommand{\Pr}{\mathbb{P}}
\newcommand{\Ex}{\mathbb{E}}
\newcommand{\measP}{{\mathsf{P}}}
\newcommand{\F}{{\mathbb{F}}}
\newcommand{\0}{\mathbf{0}}
\newcommand{\1}{\mathbf{1}}
\newcommand{\R}{\mathbb{R}}
\newcommand{\N}{\mathbb{N}}
\newcommand{\Lim}{\mathsf{Lim}}
\DeclareMathOperator{\VI}{VI}
\DeclareMathOperator{\SVI}{SVI}
\theoremstyle{plain}
\newtheorem{theorem}{Theorem}
\newtheorem{corollary}[theorem]{Corollary}
\newtheorem*{corollary*}{Corollary}
\newtheorem{lemma}[theorem]{Lemma}
\newtheorem{proposition}[theorem]{Proposition}
\theoremstyle{definition}
\newtheorem{definition}[theorem]{Definition}
\newtheorem*{definition*}{Definition}
\newtheorem{assumption}{Assumption}
\theoremstyle{remark}
\newtheorem{remark}{Remark}
\newtheorem*{remark*}{Remark}
\newtheorem*{notation*}{Notational remark}
\newtheorem{example}{Example}
\newtheorem{experiment}{Experiment}
\numberwithin{theorem}{section}
\numberwithin{remark}{section}
\numberwithin{example}{section}
\DeclarePairedDelimiter{\abs}{\lvert}{\rvert}
\DeclarePairedDelimiter{\inner}{\langle}{\rangle}
\DeclarePairedDelimiter{\norm}{\lVert}{\rVert}
\begin{document}


\title[Stochastic Forward-Backward-Forward]{Forward-Backward-Forward Methods with Variance Reduction for Stochastic Variational inequalities}

\author[R.I. Bo\c t]{Radu Ioan Bo\c t$^{\star}$}
\address{$^{\star}$\,%
Faculty of Mathematics, University of Vienna, Oskar-Morgenstern-Platz 1, A-1090 Vienna, Austria.}
\EMAIL{radu.bot@univie.ac.at}
\author[P.~Mertikopoulos]{Panayotis Mertikopoulos$^{\circ}$}
\address{$^{\circ}$\,%
Univ. Grenoble Alpes, CNRS, Inria, Grenoble INP, LIG 38000 Grenoble, France.}
\EMAIL{panayotis.mertikopoulos@imag.fr}
\author[M.Staudigl]{\\Mathias Staudigl$^{\diamond}$}
\address{$^{\diamond}$\,%
Maastricht University, Department of Quantitative Economics, P.O. Box 616, NL\textendash 6200 MD Maastricht, The Netherlands.}
\EMAIL{m.staudigl@maastrichtuniversity.nl}
\author[P.T. Vuong]{Phan Tu Vuong$^{\star}$}
\EMAIL{vuong.phan@univie.ac.at}

\thanks{%
R.~I.~Bo\c{t} and P.~T.~Vuong acknowledge support by the Austrian Science Fund (FWF) within the project I2419-N32 (``Employing Recent Outcomes in Proximal Theory Outside the Comfort Zone'').
P.~Mertikopoulos has received financial support from
the FMJH Program PGMO under grant HEAVY.NET
and
the French National Research Agency (ANR) under grant ORACLESS (ANR\textendash 16\textendash CE33\textendash 0004\textendash 01).
M.~Staudigl and P.~Mertikopoulos have been sponsored by the COST Action CA16228 ``European Network for Game Theory''.}

\subjclass[2010]{65K15; 62L20; 90C15; 90C33}
\keywords{variational inequalities, stochastic approximation, forward-backward-forward algorithm, variance reduction}

\newcommand{\acli}[1]{\textit{\acl{#1}}}
\newcommand{\acdef}[1]{\textit{\acl{#1}} \textup{(\acs{#1})}\acused{#1}}
\newcommand{\acdefp}[1]{\emph{\aclp{#1}} \textup(\acsp{#1}\textup)\acused{#1}}

\newacro{EV}{expected value}
\newacro{SAA}{sample average approximation}
\newacro{SO}{stochastic oracle}
\newacro{SA}{stochastic approximation}
\newacro{FB}{forward-backward}
\newacro{FBF}{forward-backward-forward}
\newacro{SFBF}{stochastic forward-backward-forward}
\newacro{SEG}{stochastic extra-gradient}
\newacro{UBV}{uniformly bounded variance}
\newacroplural{NE}[NE]{Nash equilibria}
\newacro{VI}{variational inequality}
\newacroplural{VI}[VIs]{variational inequalities}
\newacro{iid}[i.i.d.]{independent and identically distributed}
\newacro{EE}{energy efficiency}

\begin{abstract}
We develop a new stochastic algorithm with variance reduction for solving pseudo-monotone stochastic variational inequalities. Our method builds on Tseng's \ac{FBF} algorithm, which is known in the deterministic literature to be a valuable alternative to Korpelevich's extragradient method when solving variational inequalities over a convex and closed set governed by pseudo-monotone, Lipschitz continuous operators. The main computational advantage of Tseng's algorithm is that it relies only on a single projection step and two independent queries of a stochastic oracle. Our algorithm incorporates a variance reduction mechanism and leads to almost sure (a.s.) convergence to an optimal solution. To the best of our knowledge, this is the first stochastic look-ahead algorithm achieving this by using only a single projection at each iteration.
\end{abstract}
\maketitle

\renewcommand{\sharp}{\gamma}
\acresetall
\allowdisplaybreaks

\section{Introduction}
\label{sec:introduction}

In this paper we consider the following variational inequality problem, denoted as $\VI(T,\scrX)$, or simply  $\VI$:  given a nonempty closed and convex set $\scrX\subseteq\mathbb{R}^{d}$ and a single valued map $T:\mathbb{R}^{d}\to\mathbb{R}^{d}$, find $x^{\ast}\in\scrX$ such that
\begin{equation}
\label{eq:strong}
\inner{T(x^{\ast}),x-x^{\ast}}
	\geq 0
	\quad
	\text{for all $x\in\scrX$}.
\end{equation}
We call $S(T,\scrX)\equiv\scrX_{\ast}$ the set of (Stampacchia) solutions of $\VI(T,\scrX)$. The variational inequality problem \eqref{eq:strong} arises in many interesting applications in economics, game theory and engineering \cite{Scu10,RavSha11,KanShan12,JNT11,MerSta18b}, and includes as a special case first-order optimality conditions for nonlinear optimization, by choosing $T=\nabla f$ for some smooth function $f$. If $\scrX$ is unbounded, it can also be used to formulate complementarity problems, systems of equations, saddle point problems and many equilibrium problems. We refer the reader to \cite{FacPan03} for an extensive review of applications in engineering and economics.

In many instances the problem $\VI$ arises as the expected value of an underlying stochastic optimization problem whose primitives are defined on a probability space $(\Omega,\scrF,\Pr)$ carrying a random variable $\xi:(\Omega,\scrF)\to(\Xi,\scrA)$ taking values in a measurable space $(\Xi,\scrA)$ and inducing a law $\measP=\Pr\circ\xi^{-1}$. Given the random element $\xi$, consider the measurable mapping $F:\scrX\times\Xi\to\R^{d}$, defining an integrable random vector $F(x,\xi):\Omega\to\R^{d}$ via the composition $F(x,\xi)(\omega)=F(x,\xi(\omega))$. The stochastic variational inequality problem on which we will focus in this paper is denoted by  $\SVI$ and defined as follows: 
\begin{definition}\label{def:SVI}
Let the operator $T:\mathbb{R}^{d}\to\mathbb{R}^{d}$ be defined by
\begin{equation}\label{eq:T}
T(x):=\Ex_{\xi}[F(x,\xi)]:=\int_{\Omega}F(x,\xi(\omega))\dif\Pr(\omega)=\int_{\Xi}F(x,z)\dif\measP(z).
\end{equation}
Find $x^{\ast}\in\scrX$ satisfying \eqref{eq:strong}.
\end{definition}

This definition is known as the \emph{expected value formulation of the stochastic variational inequality problem}. The expected value formulation goes back to the seminal work of \cite{KinRoc93}. By its very definition, if the operator $T$ defined in \eqref{eq:T} would be known, then the expected value formulation can be solved by any standard solution technique for deterministic variational inequalities. However, in practice, the operator $T$ is usually not directly accessible, either due to excessive computations involved in performing the integral, or because $T$ itself is the solution of an embedded subproblem. Hence, in most situations of interest, the solution of $\SVI$ relies on random samples of the operator $F(x,\xi)$. In this context, there are two current methodologies available; the \ac{SAA} approach replaces the expected value formulation with an empirical estimator of the form 
\begin{align*}
\hat{T}^{N}(x)=\frac{1}{N}\sum_{j=1}^{N}F(x,\xi_{j}),
\end{align*}
and use the resulting deterministic map $T^{N}$ as the input in one existing algorithm of choice. We refer to \cite{DenRusSha09} for this solution approach in connection with Monte Carlo simulation. We note that this approach is the standard choice in expected residual minimization problems, when $\measP$ is unknown but accessible via a Monte Carlo approach. 

A different methodology is the \ac{SA} approach, where samples are obtained in an online fashion, namely, the decision maker chooses one deterministic algorithm to solve the expected value formulation, and draws a fresh random variable whenever needed. The mechanism to draw a fresh sample from $\measP$ is usually named a \ac{SO}, which report generates a stochastic error $F(x,\xi)-T(x)$.  

Until very recently, the SA approach has only been used for the expected value formulation under very restrictive assumptions. To the best of our knowledge, the first formulation of an SA approach for a stochastic VI problem was made by \cite{JiaXu09}, under the assumption of strong monotonicity and continuity of the operator $T$. There, a proximal point algorithm of the form
\begin{equation}\label{eq:prox}
X_{n+1}=\Pi_{\scrX}[X_{n}+\alpha_{n}F(x_{n},\xi_{n})]
\end{equation}
is considered, where $\Pi_{\scrX}$ denotes the Euclidean projection onto $\scrX$, $(\xi_{n})_{n\geq 0}$ is a sample of $\measP$, and $(\alpha_{n})_{n\geq 0}$ is a sequence of positive step sizes. Almost sure convergence of the iterates is proven for small step sizes, assuming $T$ is Lipschitz continuous and strongly monotone, and the stochastic error is uniformly bounded. Relaxing strong monotonicity to plain monotonicity, the recent paper \cite{YouNedSha17} incorporated a Tikhonov regularization scheme into the stochastic approximation algorithm \eqref{eq:prox} and proved almost sure convergence of the generated stochastic process. The only established method guaranteeing almost sure convergence under the significantly weaker assumption of \emph{pseudo-monotonicty} of the mean operator is the extragradient approach of \cite{IusJofOliTho17}. The original Korpelevich extragradient scheme of \cite{Kor76} consists of two projection steps using two evaluations of the deterministic map $T$ at generated test points $y_{n}$ and $x_{n}$. Extending this to the stochastic oracle case, we arrive at the \ac{SEG} method 
\begin{equation}
\label{eq:SEG}
\tag{SEG}
\begin{aligned}
Y_{n}
	&= \Pi_{\scrX}[X_{n}-\alpha_{n}A_{n+1}]
	\\
X_{n+1}
	&= \Pi_{\scrX}[X_{n}-\alpha_{n}B_{n+1}]
\end{aligned}
\end{equation}
where $(A_{n})_{n\geq 1},(B_{n})_{n\geq 1}$ are stochastic estimators of $T(X_{n})$, and $T(Y_{n})$, respectively.  The paper \cite{IusJofOliTho17} constructs these estimators by relying on a dynamic sampling strategy, where noise reduction of the estimators is achieved via a mini-batch sampling of the stochastic operators $F(X_{n},\xi$) and $F(Y_{n},\xi)$. Within this mini-batch formulation, almost sure convergence of the stochastic process $(X_{n})_{n\in\N}$ to the solution set can be proven even with constant step size implementations of \ac{SEG}. On top, optimal convergence rates of $O(1/N)$ in terms of the mean squared residual of the $\VI$ are obtained.

\subsection{Our Contribution}
We briefly summarize the main contributions of this work. The most costly part of \ac{SEG} are the two separate projection steps performed at each single iteration of the method. We show in this paper that a stochastic version of Tseng's forward-backward-forward \cite{Tse00}, which we call the \ac{SFBF} algorithm, preserves the strong trajectory-based convergence results, while the saving of one projection step allows us to beat \ac{SEG} significantly in terms of computational overheads and runtimes. In terms of convergence properties the \ac{SFBF} algorithm developed in this  paper has the same good properties as \ac{SEG}. However, \ac{SFBF} is potentially more efficient than \ac{SEG} in each iteration since it relies only on a single euclidean projection step. The price to pay for this is that we obtain an infeasible method (as is typical for primal-dual schemes) with a lower computational complexity count at the positive side. Additionally, the theoretically allowed range for step sizes is by the constant factor $\sqrt{3}$ times larger than the theoretically allowed largest step size in \ac{SEG}. This constant factor gain results in significant improvements in terms of the convergence speed. This will be illustrated with extensive numerical evidences reported in \cref{sec:numerics}.

\section{Preliminaries}
\label{sec:prelims}
\subsection{Notation}
For $x,y\in\mathbb{R}^{d}$, we denote by $\inner{x,y}$ the standard inner product, and by $\norm{x}\equiv\norm{x}_{2}:=\inner{x,x}^{\frac{1}{2}}$ the corresponding norm. For $p\in[1,\infty]$, the $\ell_{p}$ norm on $\R^d$ is defined for $x=(x_1,...,x_p)$ as $\norm{x}_{p}:=\left(\sum_{i=1}^{n}\abs{x_{i}}^{p}\right)^{\frac{1}{p}}$. For a nonempty, closed and convex set $E\subseteq\mathbb{R}^{d}$, the Euclidean projector is defined as $\Pi_{E}(x):=\argmin_{y\in E}\norm{y-x}$ for $x\in\mathbb{R}^{d}$. All random elements are defined on a given probability space $(\Omega,\scrF,\Pr)$. An $E$-valued random variable is a $(\scrF,\scrE)$-measurable mapping $f:\Omega\to E$; we write $f\in L^{0}(\Omega,\scrF,\Pr;E)$. For every $p\in[1,\infty]$, define the equivalence class of random variables $f\in L^{0}(\Omega,\scrF,\Pr;E)$ with $\Ex(\norm{f}^{p})^{1/p}<\infty$ as $L^{p}(\Omega,\scrF,\Pr;E)$. If $\scrG\subseteq\scrF$, the conditional expectation of the random variable $f\in L^{p}(\Omega,\scrF,\Pr;E)$ is denoted by $\Ex[f\vert\scrG]$. For $f_{1},\ldots,f_{k}\in L^{p}(\Omega,\scrF,\Pr;E)$ we denote the sigma-algebra generated by these random variables by $\sigma(f_{1},\ldots,f_{k})$, this is the smallest sigma-algebra measuring the random variables $f_{1},\ldots, f_{k}$. Let $(\Omega,\scrF,\F=(\scrF_{n})_{n\geq 0},\Pr)$ be a complete stochastic basis. We denote by $\ell^{0}(\F)$ the set of random sequences $(\xi_{n})_{n\geq 1}$ such for each $n\in\N$, $\xi_{n}\in L^{0}(\Omega,\scrF_{n},\Pr;\R)$. For $p\in[1,\infty]$, we set 
\begin{align*}
\ell^{p}(\F)\eqdef \{(\xi_{n})_{n\geq 1}\in\ell^{0}(\F)\vert \sum_{n\geq 1}\abs{\xi_{n}}^{p}<\infty\quad\Pr\text{-a.s.}\}. 
\end{align*}
The following properties of the euclidean projection on a closed and convex set are well known. 
\begin{lemma}\label{lem:projection}
Let $K\subseteq\mathbb{R}^{d}$ be a nonempty, closed and convex set. Then:
\begin{itemize}
\item[(i)] $\Pi_{K}(x)$ is the unique point of $K$ satisfying $\inner{x-\Pi_{K}(x),y-\Pi_{K}(x)}\leq 0$ for all $y\in K$;
\item[(ii)] for all $x\in\mathbb{R}^{d}$ and $y\in K$, we have $\norm{\Pi_{K}(x)-y}^{2}+\norm{\Pi_{K}(x)-x}^{2}\leq\norm{x-y}^{2}$;
\item[(iii)] for all $x,y\in\mathbb{R}^{d}$, $\norm{\Pi_{K}(x)-\Pi_{K}(y)}\leq\norm{x-y}$;
\item[(iv)] given $\alpha>0$ and $T:K\to\mathbb{R}^{d}$,  the set of solutions of the variational problem $\VI(T,K)$ can be expressed as $S(T,K)=\{x\in\mathbb{R}^{d}\vert x=\Pi_{K}(x-\alpha T(x))\}$.  
\end{itemize}
\end{lemma}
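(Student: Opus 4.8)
The plan is to treat part~(i) as the keystone: it is the variational characterization of the projection from which parts~(ii)--(iv) follow by purely algebraic manipulations, so the only genuine analytic work lies in~(i).

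Before addressing~(i), I would first record that the minimizer defining $\Pi_{K}(x)$ exists and is unique: existence follows because $y\mapsto\norm{y-x}^{2}$ is continuous and coercive on the nonempty closed set $K$, and uniqueness follows from the strict convexity of the squared norm together with the convexity of $K$. With $\Pi_{K}$ well defined, to prove the characterization in~(i) I would argue both implications. For the forward direction, set $p=\Pi_{K}(x)$ and fix $y\in K$; by convexity the segment $p+t(y-p)$ lies in $K$ for $t\in[0,1]$, so the scalar function $\phi(t)=\norm{x-p-t(y-p)}^{2}$ attains its minimum over $[0,1]$ at $t=0$, whence $\phi'(0)\geq 0$. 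Computing $\phi'(0)=-2\inner{x-p,y-p}$ yields $\inner{x-p,y-p}\leq 0$. For the converse, suppose $p\in K$ satisfies $\inner{x-p,y-p}\leq 0$ for all $y\in K$; expanding $\norm{x-y}^{2}=\norm{x-p}^{2}+2\inner{x-p,p-y}+\norm{p-y}^{2}$ and using $\inner{x-p,p-y}\geq 0$ shows $\norm{x-y}^{2}\geq\norm{x-p}^{2}$, so $p$ is a minimizer, and the uniqueness above identifies $p=\Pi_{K}(x)$.

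Parts~(ii) and~(iii) then follow by feeding~(i) into the same expansion of a squared norm. For~(ii), the identity $\norm{x-y}^{2}=\norm{x-p}^{2}+2\inner{x-p,p-y}+\norm{p-y}^{2}$ with $p=\Pi_{K}(x)$ and the inequality $\inner{x-p,p-y}\geq 0$ from~(i) give the claim directly. For~(iii), write $p=\Pi_{K}(x)$ and $q=\Pi_{K}(y)$, apply~(i) twice --- using $q$ as a test point for the projection of $x$ and $p$ as a test point for the projection of $y$ --- and add the two resulting inequalities; after rearrangement this produces the stronger estimate $\norm{p-q}^{2}\leq\inner{x-y,p-q}$, and the Cauchy--Schwarz inequality finishes the argument.

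Finally, part~(iv) is a direct translation of~(i). A point $x^{\ast}\in K$ solves $\VI(T,K)$ if and only if $\inner{T(x^{\ast}),y-x^{\ast}}\geq 0$ for all $y\in K$; multiplying by $\alpha>0$ and rewriting gives $\inner{(x^{\ast}-\alpha T(x^{\ast}))-x^{\ast},y-x^{\ast}}\leq 0$ for all $y\in K$, which by the characterization in~(i) is exactly the statement $x^{\ast}=\Pi_{K}(x^{\ast}-\alpha T(x^{\ast}))$. I expect the only delicate point in the whole argument to be the first-order step in~(i): one must justify taking a one-sided derivative of $\phi$ at the endpoint $t=0$, since the minimizer sits on the boundary of $[0,1]$; everything after that reduces to routine expansion of inner products.
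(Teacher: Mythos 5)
Your proof is correct, and in fact the paper offers no proof to compare it against: Lemma~\ref{lem:projection} is stated there as a collection of well-known facts about the Euclidean projection, so your self-contained argument supplies exactly what the paper omits. Your route is the standard one, with part~(i) as the keystone: existence/uniqueness of the minimizer, the forward implication via first-order optimality of $\phi(t)=\norm{x-p-t(y-p)}^{2}$ along the segment, and the converse via the expansion $\norm{x-y}^{2}=\norm{x-p}^{2}+2\inner{x-p,p-y}+\norm{p-y}^{2}$. Three small observations. First, the ``delicate point'' you flag is not delicate: $\phi$ is a quadratic polynomial in $t$, so it is smooth, and minimality at the left endpoint of $[0,1]$ yields $\phi'(0)\geq 0$ directly from the one-sided difference quotient. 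Second, your converse argument for~(i) already \emph{is} the proof of~(ii) verbatim, and in~(iii) you in fact establish the stronger firm-nonexpansiveness-type estimate $\norm{\Pi_{K}(x)-\Pi_{K}(y)}^{2}\leq\inner{x-y,\Pi_{K}(x)-\Pi_{K}(y)}$, which is more than the lemma requires --- both are worth recording explicitly. Third, in~(iv) it is worth noting that although the paper writes the fixed-point set over $x\in\mathbb{R}^{d}$, any $x$ satisfying $x=\Pi_{K}(x-\alpha T(x))$ automatically lies in $K$ (the projection maps into $K$), so the set is contained in $K$, where $T$ is defined and where your equivalence between $\inner{T(x^{\ast}),y-x^{\ast}}\geq 0$ for all $y\in K$ and the fixed-point equation applies; this closes the only formal gap between the two formulations.
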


\begin{remark}
In the literature on variational inequalities, there exists an alternative solution concept known as \emph{weak}, or \emph{Minty}, solutions. In this paper we are only interested in \emph{strong}, or \emph{Stampacchia}, solutions of $\VI(T,K)$, defined by inequality \eqref{eq:strong}.
\end{remark}
 
Another useful fact we use in this paper is the following elementary identity. 
\begin{lemma}[Pythagorean identity]
\label{lem:Pyth}
For all $x,x_{n},x_{n+1}\in\mathbb{R}^{d}$ we have
\begin{align*}
\norm{x_{n+1}-x}^{2}+\norm{x_{n+1}-x_{n}}^{2}-\norm{x_{n}-x}^{2}=2\inner{x_{n+1}-x_{n},x_{n+1}-x}.
\end{align*}
\end{lemma}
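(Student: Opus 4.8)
The identity is purely algebraic and follows from the bilinearity of the inner product together with the defining relation $\norm{z}^{2}=\inner{z,z}$; there is no genuine obstacle here, so the only real question is which bookkeeping keeps the expansion shortest. The plan is to introduce the two vectors that already appear on the right-hand side, namely $u\eqdef x_{n+1}-x_{n}$ and $v\eqdef x_{n+1}-x$, and then to observe that the remaining difference on the left-hand side is simply their difference, $x_{n}-x=(x_{n+1}-x)-(x_{n+1}-x_{n})=v-u$.

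With this substitution the left-hand side becomes $\norm{v}^{2}+\norm{u}^{2}-\norm{v-u}^{2}$, so the whole statement reduces to a single expansion, $\norm{v-u}^{2}=\norm{v}^{2}-2\inner{u,v}+\norm{u}^{2}$. Substituting this and cancelling the two squared-norm terms leaves exactly $2\inner{u,v}=2\inner{x_{n+1}-x_{n},x_{n+1}-x}$, which is the claimed right-hand side, so the proof closes in one line once the reduction is made.

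I expect no difficulty at any step: the argument does not use closedness, convexity, or any property of a projection, and in fact holds for arbitrary vectors in $\mathbb{R}^{d}$ (indeed in any real inner product space). The only mild care needed is to keep the signs straight when writing $x_{n}-x=v-u$ rather than $u-v$. An equally short alternative, if one prefers to avoid the auxiliary vectors, is to expand all three norms directly through $\norm{a-b}^{2}=\norm{a}^{2}-2\inner{a,b}+\norm{b}^{2}$ and collect the six resulting terms; both routes are routine and lead to the same cancellation.
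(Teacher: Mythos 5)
Your proof is correct. The paper states this lemma without proof (it is presented as an elementary identity), and your argument---substituting $u = x_{n+1}-x_{n}$, $v = x_{n+1}-x$, noting $x_{n}-x = v-u$, and expanding $\norm{v-u}^{2}$---is exactly the routine expansion the paper implicitly takes for granted, so there is nothing to reconcile.
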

\subsection{Probabilistic Tools}
We recall the Minkowski inequality: for given functions $f,g\in L^{p}(\Omega,\scrF,\Pr;E),\scrG\subseteq\scrF$ and $p\in[1,\infty]$, we have  
\begin{equation}
\label{eq:Minkowski}
\Ex[\norm{f+g}^{p}\vert\scrG]^{1/p}\leq \Ex[\norm{f}^{p}\vert\scrG]^{1/p}+\Ex[\norm{g}^{p}\vert\scrG]^{1/p}.
\end{equation}

For the convergence analysis we will make use of the following classical lemma (see e.g. \cite[Lemma 11, page 50]{Pol87}).
\begin{lemma}[Robbins-Siegmund]
\label{lem:RS}
Let $(\Omega,\scrF,\F=(\scrF_{n})_{n\geq 0},\Pr)$ be a discrete stochastic basis. Let $(v_{n})_{n\geq 1},(u_{n})_{n\geq 1}\in\ell^{0}_{+}(\F)$ and $(\theta_{n})_{n\geq 1},(\beta_{n})_{n\geq 1}\in\ell^{1}_{+}(\F)$ be such that  for all $n \geq 0$
\begin{align*}
\Ex[v_{n+1}\vert\scrF_{n}]\leq(1+\theta_{n})v_{n}-u_{n}+\beta_{n}\qquad  \Pr-\text{a.s. }.
\end{align*}
Then $(v_{n})_{n\geq 0}$ converges a.s. to a random variable $v$, and $(u_{n})_{n\geq 1}\in\ell^{1}_{+}(\F)$.
\end{lemma}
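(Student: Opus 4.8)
The plan is to prove the statement via the nonnegative supermartingale convergence theorem, after two preparatory reductions.

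First I would remove the multiplicative factor $(1+\theta_n)$ by a discounting transformation. Since $(\theta_n)\in\ell^{1}_{+}(\F)$, the products $\alpha_n\eqdef\prod_{k=0}^{n-1}(1+\theta_k)^{-1}$ form a nonincreasing, $\scrF_{n-1}$-measurable sequence in $(0,1]$ converging a.s. to a strictly positive limit $\alpha_\infty=\prod_{k\geq 0}(1+\theta_k)^{-1}>0$ (finiteness of $\sum_k\theta_k$ gives $\sum_k\log(1+\theta_k)<\infty$). Multiplying the hypothesis by the $\scrF_n$-measurable factor $\alpha_{n+1}$ and using $\alpha_{n+1}(1+\theta_n)=\alpha_n$ yields
\[
\Ex[\alpha_{n+1}v_{n+1}\vert\scrF_n]\leq\alpha_n v_n-\alpha_{n+1}u_n+\alpha_{n+1}\beta_n.
\]
Writing $v'_n=\alpha_n v_n$, $u'_n=\alpha_{n+1}u_n$, $\beta'_n=\alpha_{n+1}\beta_n$, this reduces matters to the case $\theta_n\equiv 0$; since $\alpha_\infty\leq\alpha_n\leq 1$, convergence of $(v'_n)$ is equivalent to that of $(v_n)$ and $\sum_n u'_n<\infty\iff\sum_n u_n<\infty$, while $\sum_n\beta'_n\leq\sum_n\beta_n<\infty$ a.s.

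Next, in the reduced setting I would construct a supermartingale. Put $B_n=\sum_{k=0}^{n-1}\beta'_k$, so $B_\infty<\infty$ a.s., and
\[
M_n=v'_n+\sum_{k=0}^{n-1}u'_k-B_n.
\]
A direct computation from $\Ex[v'_{n+1}\vert\scrF_n]\leq v'_n-u'_n+\beta'_n$ gives $\Ex[M_{n+1}\vert\scrF_n]\leq M_n$, so $(M_n)$ is a supermartingale (everything is adapted because $u_n,\theta_n,\beta_n$ are $\scrF_n$-measurable). The main obstacle is that only a.s. finiteness and summability are assumed, not integrability, so Doob's theorem does not apply to $M_n$ directly: it need not lie in $L^1$ nor be bounded below by a constant. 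I would resolve this by localization. For $a>0$ let $\sigma_a=\inf\{n:B_{n+1}>a\}$, a stopping time with $\sigma_a\uparrow\infty$ a.s. as $a\to\infty$ (since $B_\infty<\infty$ a.s.); on $\{n\leq\sigma_a\}$ one has $B_n\leq a$, hence $M_{n\wedge\sigma_a}+a\geq 0$ is a nonnegative supermartingale. To secure integrability of the initial value I would intersect with $\Omega_j=\{v'_0\leq j\}\in\scrF_0$ and study $\1_{\Omega_j}M_{n\wedge\sigma_a}$, which satisfies the same inequalities (as $\Omega_j\in\scrF_0\subseteq\scrF_n$) and has integrable, bounded-below initial value $\1_{\Omega_j}v'_0$. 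The nonnegative supermartingale convergence theorem then gives a.s. convergence of $M_{n\wedge\sigma_a}$ on $\Omega_j$; letting $a,j\to\infty$ along $\N$ and using $\Pr(\sigma_a=\infty)\to 1$ together with $\Pr(\bigcup_j\Omega_j)=1$ shows that $(M_n)$ converges a.s. on all of $\Omega$.

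Finally I would read off both conclusions. Since $B_n\to B_\infty$ converges a.s., the a.s. convergence of $(M_n)$ forces $v'_n+\sum_{k<n}u'_k$ to converge to a finite limit a.s. As $\sum_{k<n}u'_k$ is nondecreasing and $v'_n\geq 0$, these partial sums stay bounded, whence $\sum_k u'_k<\infty$ a.s.; consequently $v'_n=M_n+B_n-\sum_{k<n}u'_k$ converges a.s. as a difference of convergent sequences. Undoing the discounting via $v_n=v'_n/\alpha_n$ with $\alpha_n\to\alpha_\infty>0$ yields a.s. convergence of $(v_n)$ to some $v$ and $(u_n)\in\ell^{1}_{+}(\F)$. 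I expect the localization step to be the only genuinely delicate point, the remainder being bookkeeping.
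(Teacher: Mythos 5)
Your proof is correct. The paper does not actually prove this lemma---it cites it as a classical result (Lemma 11, p.~50 of Polyak's book)---and your argument is precisely the standard Robbins--Siegmund proof: discount away the factor $(1+\theta_n)$ using $\alpha_{n+1}(1+\theta_n)=\alpha_n$ with $\alpha_{n+1}$ being $\scrF_n$-measurable, then run a nonnegative-supermartingale convergence argument on the compensated process. Your localization (stopping at $\sigma_a=\inf\{n:B_{n+1}>a\}$, which is a stopping time because $B_{n+1}$ is $\scrF_n$-measurable, together with truncation on $\Omega_j=\{v_0'\leq j\}\in\scrF_0$) is exactly the right device to cover the paper's purely almost-sure hypotheses $(\ell^{0}_{+},\ell^{1}_{+}$, no integrability assumed$)$, under which conditional expectations must be read in the generalized sense for nonnegative random variables; this matches how the classical proof handles the same issue.
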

Finally, we need the celebrated Burkholder-Davis-Gundy inequality (see e.g. \cite{Str11}).
\begin{lemma}\label{lem:BDG}
Let $(\Omega,\scrF,(\scrF_{n})_{n\geq 0},\Pr)$ be a discrete stochastic basis and $(U_{n})_{n\geq 0}$ a vector-valued martingale relative to this basis. Then, for all $p\in[1,\infty)$, there exists a universal constant $C_{p}>0$ such that for every $N \geq 1$
\begin{equation*}
\Ex\left[\left(\sup_{0 \leq i\leq N}\norm{U_{i}}\right)^{p}\right]^{1/p}\leq C_{p}\Ex\left[\left(\sum_{i=1}^{N}\norm{U_{i}-U_{i-1}}^{2}\right)^{p/2}\right]^{\frac{1}{p}}.
\end{equation*}
\end{lemma}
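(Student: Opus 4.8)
The plan is to treat this as the classical discrete-time Burkholder--Davis--Gundy inequality and reassemble the standard proof. Note first that the asserted bound can hold with a \emph{universal} constant only when the martingale starts at the origin (the constant process $U_{n}\equiv U_{0}$ otherwise makes the left-hand side positive and the right-hand side zero), so I would record the normalisation $U_{0}=\0$. Write $U_{N}^{\ast}:=\max_{0\le i\le N}\norm{U_{i}}$ for the maximal function and $S_{N}:=\bigl(\sum_{i=1}^{N}\norm{U_{i}-U_{i-1}}^{2}\bigr)^{1/2}$ for the discrete square function, so that the goal is $\Ex[(U_{N}^{\ast})^{p}]^{1/p}\le C_{p}\,\Ex[S_{N}^{p}]^{1/p}$. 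The strategy has two stages: (i) settle the exponent $p=2$ by hand using Doob's maximal inequality together with orthogonality of the martingale increments; and (ii) promote this to every $p\in[1,\infty)$ through a \emph{good-$\lambda$} comparison between $U_{N}^{\ast}$ and $S_{N}$, integrated against $p\lambda^{p-1}\dd\lambda$.

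For stage (i) I would first observe that, the Euclidean norm being convex, conditional Jensen gives $\Ex[\norm{U_{n+1}}\mid\scrF_{n}]\ge\norm{\Ex[U_{n+1}\mid\scrF_{n}]}=\norm{U_{n}}$, so $(\norm{U_{n}})_{n\ge0}$ is a nonnegative submartingale and Doob's $L^{2}$ maximal inequality yields $\Ex[(U_{N}^{\ast})^{2}]\le4\,\Ex[\norm{U_{N}}^{2}]$. Since the increments $\Delta U_{i}:=U_{i}-U_{i-1}$ are martingale differences, the cross terms vanish, $\Ex[\inner{\Delta U_{i},\Delta U_{j}}]=0$ for $i\ne j$, whence $\Ex[\norm{U_{N}}^{2}]=\sum_{i=1}^{N}\Ex[\norm{\Delta U_{i}}^{2}]=\Ex[S_{N}^{2}]$. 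Combining the two bounds proves the case $p=2$ with $C_{2}=2$. The same $L^{2}$ estimate, applied to suitably \emph{stopped} portions of the martingale, is the engine for the general exponent.

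For stage (ii) I would establish a good-$\lambda$ inequality
\[
\Pr\bigl(U_{N}^{\ast}>\beta\lambda,\ S_{N}\le\delta\lambda\bigr)\le\frac{C\,\delta^{2}}{(\beta-1)^{2}}\,\Pr\bigl(U_{N}^{\ast}>\lambda\bigr),\qquad\lambda>0,
\]
for a fixed $\beta>1$ and all small $\delta>0$. The argument is a stopping-time localisation: with $\tau=\inf\{n:\norm{U_{n}}>\lambda\}$ marking the first passage above $\lambda$, one stops the martingale the first instant the square function would cross $\delta\lambda$; on the event above, the stopped process must climb from level $\lambda$ to level $\beta\lambda$ while its square function stays below $\delta\lambda$, so the $L^{2}$ estimate of stage (i) applied after $\tau$, together with Chebyshev's inequality, supplies the factor $\delta^{2}/(\beta-1)^{2}$ relative to $\Pr(\tau\le N)=\Pr(U_{N}^{\ast}>\lambda)$. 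Feeding this into the elementary integration lemma --- if $\Pr(X>\beta\lambda,\,Y\le\delta\lambda)\le\eta\,\Pr(X>\lambda)$ for all $\lambda>0$ and $\beta^{p}\eta<1$, then $\Ex[X^{p}]\le C_{p}\,\Ex[Y^{p}]$, obtained by multiplying by $p\lambda^{p-1}$ and integrating --- with $X=U_{N}^{\ast}$, $Y=S_{N}$ and $\delta$ small enough that $\beta^{p}C\delta^{2}/(\beta-1)^{2}<1$ delivers the claim for every $p\in[1,\infty)$.

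The hard part is the good-$\lambda$ step, and in particular the bookkeeping of the stopping times needed to make the estimate genuinely uniform in $\lambda$ with a constant $C_{p}$ depending on $p$ alone: because the square function is not predictable, a single large increment can carry it across the threshold $\delta\lambda$ exactly at the stopping time, and this terminal jump must be split off and controlled separately. One also has to check the integrability that legitimises integrating the tail estimate, but truncating at the finite horizon $N$ renders every quantity bounded and removes this concern. Everything else --- Doob's inequality, the orthogonality identity, conditional Jensen, and the $\lambda$-integration --- is routine, and the vector-valued setting costs nothing beyond replacing absolute values by the Euclidean norm and products by inner products; accordingly, in the paper I would simply invoke the cited reference rather than reproduce this argument.
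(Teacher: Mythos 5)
The paper contains no proof of this lemma to compare against: it is the classical discrete-time Burkholder--Davis--Gundy inequality, invoked directly from the cited reference \cite{Str11}, and your closing recommendation (cite rather than reprove) coincides with what the paper does. Two pieces of your reconstruction are correct and even sharpen the statement: the normalisation $U_{0}=\0$ is genuinely needed for a universal constant (and it holds in the paper's only application, \cref{lem:M}, where the martingale starts at zero), and stage (i) \textpar{conditional Jensen to make $(\norm{U_{n}})_{n}$ a submartingale, Doob's $L^{2}$ maximal inequality, orthogonality of increments} is a complete proof for $p=2$.

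Stage (ii), however, has a genuine gap, and it sits exactly where you dismiss it as ``bookkeeping''. To run ``the $L^{2}$ estimate after $\tau$'' you need a \emph{martingale} whose square function is pathwise of order $\delta\lambda$, and here you face a dilemma. If you stop ``the first instant the square function \emph{would} cross $\delta\lambda$'', i.e.\ at $\inf\{n:\sum_{i\le n+1}\norm{\Delta U_{i}}^{2}>\delta^{2}\lambda^{2}\}$, this random time anticipates the next increment, is not a stopping time, and the stopped process need not be a martingale, so Doob and Chebyshev do not apply. If instead you stop at the legitimate stopping time $\sigma=\inf\{n:\sum_{i\le n}\norm{\Delta U_{i}}^{2}>\delta^{2}\lambda^{2}\}$, the stopped square function is only bounded by $\delta^{2}\lambda^{2}+\norm{\Delta U_{\sigma}}^{2}$, and your Chebyshev step then requires an estimate of the form $\Ex\bigl[\norm{\Delta U_{\sigma}}^{2}\,;\,\mu<\sigma\le\nu\wedge N\bigr]\le C\,\delta^{2}\lambda^{2}\,\Pr(U_{N}^{\ast}>\lambda)$ with a \emph{universal} $C$. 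This is false: take a martingale that with probability $\pi$ climbs to level $\lambda$ through increments of size $\delta^{2}\lambda$ (so its square function stays at $\delta\lambda$), is frozen on all other paths, and whose next increment after the climb equals $K\lambda$ with probability $q$ and $-K\lambda q/(1-q)$ otherwise; the left side is at least $K^{2}\lambda^{2}q\pi$ while the right side is $C\delta^{2}\lambda^{2}\pi$, and $K^{2}q\to\infty$ kills any universal constant. The offending paths are excluded from your good event (their square function is large), but Chebyshev integrates over the whole space and cannot see this. The classical repair is not routine: it is precisely Davis's decomposition --- split $\Delta U_{i}$ into a conditionally centred part supported on $\{\norm{\Delta U_{i}}\le 2\max_{j<i}\norm{\Delta U_{j}}\}$, whose jumps are \emph{predictably} bounded so that the good-$\lambda$/Chebyshev argument applies to it, plus a remainder whose total variation is controlled by the maximal jump, hence by $S_{N}$. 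The cheaper alternative of moving the jump term to the majorising side and absorbing it after the $\lambda$-integration only converges for $p>2$, whereas the lemma claims $p\in[1,\infty)$ and the paper does use exponents $q\in[1,2)$ in \cref{lem:M}. A second, smaller slip: the absorption step of your integration lemma needs $\Ex[(U_{N}^{\ast})^{p}]<\infty$ a priori, and a finite horizon does \emph{not} grant this (an integrable martingale need not have finite $p$-th moments); one must localise in space, stopping at $\inf\{n:\norm{U_{n}}>m\}$ and letting $m\to\infty$. So your skeleton is the right classical one, but the hard half of the theorem --- Davis's half --- is missing, which is a good reason to do what both you and the paper ultimately do: quote the literature.
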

When combined with Minkowski inequality, we obtain for all $p\geq 2$ a constant $C_{p}>0$ such that for every $N \geq 1$
\begin{equation*}
\Ex\left[\left(\sup_{0 \leq i\leq N}\norm{U_{i}}\right)^{p}\right]^{1/p}\leq C_{p}\sqrt{\sum_{k=1}^{N}\Ex\left(\norm{U_{i}-U_{i-1}}^{p}\right)^{2/p}}
\end{equation*}
\section{The stochastic forward-backward-forward algorithm}
\label{sec:algorithm}

In this paper we study a forward-backward-forward algorithm of Tseng type under weak monotonicity assumptions. The blanket hypotheses we consider throughout our analysis are summarized here:
\begin{assumption}[Consistency]
\label{ass:Consistent}
The solution set $\scrX_{\ast}\equiv S(T,\scrX)$ is nonemtpy.
\end{assumption}
\begin{assumption}[Stochastic Model]
\label{ass:stochastic}
The set $\scrX\subseteq\mathbb{R}^{d}$ is nonempty, closed  and convex, $(\Xi,\scrA)$ is a measurable space and $F:\mathbb{R}^{d}\times\Xi\to\mathbb{R}^{d}$ is a Carath\'{e}odory map.\footnote{The mapping $x\mapsto F(x,\xi)$ is continuous for a. e. $\xi\in\Xi$, and $\xi\mapsto F(x,\xi)$ is measurable for all $x\in\mathbb{R}^{d}$;  $\xi$ is a random variable with values in $\Xi$, defined on a probability space $(\Omega,\scrF,\Pr)$.}
\end{assumption}
\begin{assumption}[Lipschitz continuity]
\label{ass:Lipschitz}
The averaged operator $T(\cdot)=\Ex_{\xi}[F(\cdot,\xi)]:\mathbb{R}^{d}\to\mathbb{R}^{d}$ is Lipschitz continuous with modulus $L>0$.
\end{assumption}
\begin{assumption}[Pseudo-Monotonicity]
\label{ass:monotone}
The averaged operator $T(\cdot)=\Ex_{\xi}[F(\cdot,\xi)]$ is pseudo-monotone on $\mathbb{R}^{d}$, which means
\[
\forall x,y\in\R^{d}: \inner{T(x),y-x}\geq 0\Rightarrow \inner{T(y),y-x}\geq 0.
\]
\end{assumption}
At each iteration, the decision maker has access to a stochastic oracle, reporting an approximation of $T(x)$ of the form
\begin{equation}\label{eq:approx}
\hat{T}_{n+1}(x,\xi_{n+1})\eqdef \frac{1}{m_{n+1}}\sum_{i=1} ^{m_{n+1}}F(x,\xi^{(i)}_{n+1}) \ \mbox{for} \ x\in\mathbb{R}^{d}.
\end{equation}
The sequence $(m_{n})_{n\geq 1}\subseteq\N$ determines the batch size of the stochastic oracle. The random sequence $\xi_{n}=(\xi^{(1)}_{n},\ldots,\xi^{(m_{n})}_{n})$ is an i.i.d draw from $\measP$. Approximations of the form \eqref{eq:approx} are very common in Monte-Carlo simulation approaches, machine learning and computational statistics (see e.g. \cite{AtcForMou17,BotCurNoc18}, and references therein); they are easy to obtain in case we are able to sample from the measure $\measP$. The forward-backward-forward algorithm requires two queries from the stochastic oracle in which mini-batch estimators of the averaged map $T$ are revealed. This dynamic sampling strategy requires a sequence of integers $(m_{n})_{n\geq 1}$ (the \emph{batch size}) determining the size of the data set to be processed at each iteration. The random sample on each mini-batch consists of two independent stochastic processes $\xi_{n}$ and $\eta_{n}$ drawn from the law $\measP$, and explicitly given by
\begin{align*}
\xi_{n}\eqdef (\xi^{(1)}_{n},\ldots,\xi^{(m_{n})}_{n})\text{ and }\eta_{n}\eqdef (\eta^{(1)}_{n},\ldots,\eta^{(m_{n})}_{n})\qquad\forall n\geq 1.
\end{align*}
Given the current position $X_{n}$, Algorithm \ac{SFBF} queries the \ac{SO} once, to obtain the estimator $A_{n+1}\eqdef \hat{T}_{n+1}(X_{n},\xi_{n+1})$, and then constructs the random variable $Y_{n}=\Pi_{\scrX}(X_{n}-\alpha_{n}A_{n+1})$. Next, a second query to \ac{SO} is made to obtain the estimator $B_{n+1}\eqdef \hat{T}_{n+1}(Y_{n},\eta_{n+1})$, followed by the update $X_{n+1}=Y_{n}+\alpha_{n}(A_{n+1}-B_{n+1})$. The pseudocode for \ac{SFBF} is given in \cref{alg:TsengFB}.

\begin{algorithm}[htbp]
\caption{Stochastic forward-backward-forward (SFBF)}
\label{alg:TsengFB}
\tt
\begin{algorithmic}[1]
\Require
step-size sequence $\alpha_{n}$;
batch size sequence $m_{n}$
\State
Initialize $X$
	\Comment{initialization}%
\For{$n = 1,2,\dotsc$}
\State
Draw samples $\xi^{i}$ and $\eta^{i}$ from $\measP$ ($i = 1,\dotsc,m_{n}$)%
	\vspace{1ex}
	\State
	Oracle returns $\displaystyle A \leftarrow \frac{1}{m_{n}} \sum_{i=1}^{m_{n}} F(X,\xi^{i})$
		\Comment{first oracle query}\vspace{.5ex}
	\State
		Set $Y \leftarrow \Pi_{\scrX}(X - \alpha_{n}A)$
		\Comment{\acl{FB} step}\vspace{.5ex}
	\State
		Oracle returns 
		\(
		\displaystyle
		B
			\leftarrow \frac{1}{m_{n}} \sum_{i=1}^{m_{n}} F(Y,\eta^{i})
		\)
		\Comment{second oracle query}\vspace{.5ex}
	\State
		Set $X \leftarrow Y + \alpha_{n} (A - B)$
		\Comment{second forward step}
\EndFor
\end{algorithmic}
\end{algorithm}
Observe that Algorithm \ac{SFBF} is an infeasible method: the iterates $(X_{n})_{n\geq 0}$ are not necessarily elements of the admissible set $\scrX$, but the process $(Y_{n})_{n\geq 0}$ is by construction so. In the stochastic optimization case, i.e. for instances where $A_{n+1}$ is an unbiased estimator of the gradient of a real-valued function, the process $(Y_{n})_{n\geq 0}$ is seen to be a projected gradient step, where $A_{n+1}$ acts as an unbiased estimator for the stochastic gradient. This gradient step is used in an extrapolation step to generate the iterate $X_{n+1}$. We just mention that related popular primal-dual splitting schemes like ADMM \cite{BoyChuEckADMM11,CheCheOuPas18} are infeasible by nature as well. 

\begin{assumption}[Step-size choice]
\label{ass:step}
The step-size sequence $(\alpha_{n})_{n\geq 0}$ in Algorithm \ac{SFBF} satisfies
\begin{equation*}
0<\underline{\alpha}\eqdef\inf_{n\geq 0}\alpha_{n}\leq\bar{\alpha}\eqdef\sup_{n\geq 1}\alpha_{n}<\frac{1}{\sqrt{2}L}.
\end{equation*}
\end{assumption}
For $n\geq 0$, we introduce the \emph{approximation error}
\begin{equation}\label{eq:Doob}
W_{n+1}\eqdef A_{n+1}-T(X_{n}),\text{ and } Z_{n+1}\eqdef B_{n+1}-T(Y_{n}),
\end{equation}
and the sub-sigma algebras $(\scrF_{n})_{n\geq 0},(\hat{\scrF}_{n})_{n\geq 0}$, defined by $\scrF_{0}\eqdef\sigma(X_{0})$, and
\begin{align*}
\scrF_{n}&\eqdef \sigma(X_{0},\xi_{1},\xi_{2},\ldots,\xi_{n},\eta_{1},\ldots,\eta_{n})\qquad \forall n\geq 1,
\end{align*}
and
\begin{align*}
\hat{\scrF}_{n} & \eqdef \sigma(X_{0},\xi_{1},\ldots,\xi_{n},\xi_{n+1},\eta_{1},\ldots,\eta_{n}) \qquad\forall n\geq 0,
\end{align*}
respectively. Observe that $\scrF_{n}\subseteq\hat{\scrF}_{n}$ for all $n\geq 0$. We also define the filtrations $\F\eqdef (\scrF_{n})_{n\geq 0}$ and $\hat{\F}\eqdef (\hat{\scrF}_{n})_{n\geq 0}$. The introduction of these two different sub-sigma algebras is important for many reasons. First, observe that they embody the information the learner has about the optimization problem. Indeed, the sub-sigma algebra $(\scrF_{n})_{n \geq 0}$ corresponds to the information the decision maker has at the beginning the $n$-th iteration, whereas $(\hat{\scrF}_{n})_{n \geq 0}$ is the information the decision maker has after the first (projection)-step of the iteration. Therefore, $(Y_{n})_{n \geq 0}$ is measurable with respect to the sub-sigma algebra $(\hat{\scrF}_{n})_{n \geq 0}$ and $(X_{n})_{n \geq 0}$ is measurable with respect to the sub-sigma algebra $(\scrF_{n})_{n \geq 0}$. Second, we see that the process $(W_{n})_{n\geq 1}$ is $\F$-adapted, whereas the process $(Z_{n})_{n\geq 1}$ is $\hat{\F}$-adapted, unbiased approximations relative to the respective information structures are provided: 
\begin{align*}
\Ex[W_{n+1}\vert\scrF_{n}]=0 \text{ and } \Ex[Z_{n+1}\vert \hat{\scrF}_{n}]=0 \ \forall n \geq 0.
\end{align*}
\begin{assumption}[Batch Size]
\label{ass:batch}
The batch size sequence $(m_{n})_{n\geq 1}$ satisfies $\sum_{n=1}^{\infty}\frac{1}{m_{n}}<\infty$. 
\end{assumption}
A sufficient condition on the sequence $(m_{n})_{n\geq 1}$ is that for some constant $\const>0$ and integer $n_{0}>0$, we have 
\begin{equation}\label{eq:m}
m_{n}=\const\cdot(n+n_{0})^{1+a}\ln(n+n_{0})^{1+b}
\end{equation}
for $a>0$ and $b\geq-1$, or $a=0$ and $b>0$. 
The next assumption is essentially the same as the variance control assumption in \cite{IusJofOliTho17}.

\begin{assumption}[Variance Control]
\label{ass:variance}
For all $x\in\mathbb{R}^{d}$ and $p\geq 1$, let
\begin{equation*}
s_{p}(x)\eqdef\Ex[\norm{F(x,\xi)-T(x)}^{p}]^{1/p}.
\end{equation*}
There exist $p\geq 2,\sigma_{0}\geq 0$, and a measurable locally bounded function $\sigma:\scrX_{\ast}\to\R_{+}$ such that for all $x\in\mathbb{R}^{d}$ and all $x^{\ast}\in\scrX_{\ast}$ 
\begin{equation}\label{ineq:ass:variance}
s_{p}(x)\leq\sigma(x^{\ast})+\sigma_{0}\norm{x-x^{\ast}}. 
\end{equation}
\end{assumption}
Before we proceed with the convergence analysis, we want to make some clarifying remarks on this assumption. The most frequently used assumption on the SO's approximation error, which dates back to the seminal work of Robbins and Monro (see \cite{KusYin97,Duf96} for a textbook reference), asks for \ac{UBV}, i.e. 
\begin{equation}\label{eq:UBV}\tag{UBV}
\sup_{x\in\scrX}s_{2}(x)\leq \sigma. 
\end{equation}
\ac{UBV} is covered by \cref{ass:variance} when $\sigma_{0}=0$ and $\sup_{x\in\scrX_{\ast}}\sigma(x^{\ast})\leq \sigma$. \ac{UBV} is for instance valid when additive noise with finite $p$-th moment is assumed, that is, for some random variable $\xi\in L^{2}(\Omega,\Pr;\mathbb{R}^{d})$ with $\Ex[\norm{\xi}^{p}]^{1/p}\leq\sigma<\infty$ we have 
\begin{align*}
F(x,\xi)=T(x)+\xi\qquad\Pr\text{-a.s. }. 
\end{align*}
However, assuming a global variance bound is not realistic in cases where the variance of the stochastic oracle depends on the position $x$ (see e.g. Example 1 in \cite{JofTho18}). \cref{ass:variance} is much weaker than \ac{UBV}, as it exploits the local variance of the stochastic oracle rather than, potentially hard to estimate, global mean square variance bounds. The recent papers \cite{IusJofOliTho17,JofTho18} make similar assumptions on the variance of the stochastic oracle. It is shown there that \cref{ass:variance} is most natural in cases where the feasible set $\scrX$ is unbounded, and it is always satisfied when the Carath\'{e}odory functions $F(\cdot,\xi)$ are random Lipschitz (see \cref{ex:Lipschitz} below). Since \cref{alg:TsengFB} is an infeasible method, we are forced to analyze the behavior of the stochastic process $(X_{n},Y_{n})_{n\geq 0}$ on an unbounded domain, which makes \cref{ass:variance} the only realistic and convenient choice for us. \cref{ex:Lipschitz}  illustrates an important instance where \cref{ass:variance} holds. 
\begin{example}\label{ex:Lipschitz}
Assume for the Carath\'{e}odory mapping $F:\mathbb{R}^{d}\times\Xi\to\mathbb{R}^{d}$ that there exists $\scrL\in L^{p}(\Omega,\scrF,\Pr;\R_{+})$ with 
\begin{align*}
\norm{F(x,\xi)-F(y,\xi)}\leq \scrL(\xi)\norm{x-y}\qquad\forall x,y\in\mathbb{R}^{d}. 
\end{align*}
Call $L$ the Lipschitz constant of the map $x \mapsto T(x)=\Ex[F(x,\xi)]$. Then, a repeated application of the Minkowski inequality shows that for all $x \in \R^d$ and all $x^{\ast}\in\scrX_{\ast}$ we have
\begin{align*}
s_{p}(x)&\leq \Ex[\norm{F(x,\xi)-F(x^{\ast},\xi)}^{p}]^{1/p}+s_{p}(x^{\ast})+\norm{T(x)-T(x^{\ast})}\\
&\leq (\Ex[\scrL(\xi)^{p}]^{1/p}+L)\norm{x-x^{\ast}}+s_{p}(x^{\ast}).
\end{align*}
Let $\sigma(x^{\ast})$ denote a bound on $s_{p}(x^{\ast})$ and set $\sigma_{0}\eqdef L+\Ex[\scrL(\xi)^{p}]^{1/p}$, to get a variance bound as required in \cref{ass:variance}. 
\end{example}
\section{Convergence Analysis}
\label{sec:converge}
We consider the quadratic residual function defined by 
\begin{equation*}
r_{a}(x)^{2}\eqdef \norm{x-\Pi_{\scrX}(x-a T(x))}^{2} \ \forall x\in\mathbb{R}^{d}.
\end{equation*}
The reader familiar with the literature on finite-dimensional variational inequalities will recognize this immediately as the energy defined by the natural map $F^{\text{nat}}_{a}(x)\eqdef x-\Pi_{\scrX}(x-aT(x))$ \cite[][chapter 10]{FacPan03}. It is well known that $r_{a}(x)$ is a merit function for $\VI(T,\scrX)$. Moreover, $\{r_{a}(x);a>0\}$ is a family of equivalent merit functions for $\VI(T,\scrX)$, in the sense that $r_{b}(x)\geq r_{a}(x)$ for all $b>a>0$ \citep[][Proposition 10.3.6]{FacPan03}. 
Denote
\begin{equation}\label{eq:rho}
\rho_{n}\eqdef 1-2L^{2}\alpha_{n}^{2}\qquad\forall n\geq 0. 
\end{equation}
We define recursively the process $(V_{n})_{n\geq 0}$ by $V_{0}=0$ and, for all $n\geq 1$,
\begin{equation*}
V_{n+1}\eqdef V_{n}+(4+\rho_{n})\alpha_{n}^{2}\norm{W_{n+1}}^{2}+4\alpha_{n}^{2}\norm{Z_{n+1}}^{2},
\end{equation*}
so that 
\begin{equation}\label{eq:DeltaV}
\Delta V_{n}\eqdef V_{n+1}-V_{n}=(4+\rho_{n})\alpha_{n}^{2}\norm{W_{n+1}}^{2}+4\alpha_{n}^{2}\norm{Z_{n+1}}^{2}\qquad\forall n\geq 0. 
\end{equation}
Additionally, we define for all $x \in \R^d$ the process $(U_{n}(x))_{n\geq 0}$ given by $U_{0}(x)=0$, and 
\begin{equation*}
U_{n+1}(x)\eqdef U_{n}(x)+2\alpha_{n}\inner{Z_{n+1},x-Y_{n}}\qquad\forall n\geq 1
\end{equation*}
with corresponding increment 
\begin{equation*}\label{eq:defDeltaU}
\Delta U_{n}(x)\eqdef 2\alpha_{n}\inner{Z_{n+1},x-Y_{n}}\qquad\forall n\geq 0.
\end{equation*}
For any reference point $x\in\mathbb{R}^{d}$ we see that $\Ex[\Delta U_{n}(x)\vert\hat{\scrF}_{n}]=0$ for all $n\geq 0$. Hence, the process $\left(U_{n}(x)\right)_{n\geq 0}$ is a martingale w.r.t. the filtration $\hat{\F}$. Since $\scrF_{n}\subseteq\hat{\scrF}_{n}$, the tower property implies that 
\begin{equation}
\label{eq:DeltaU}
\Ex[\Delta U_{n}(x)\vert\scrF_{n}]=0 \qquad\forall x\in\mathbb{R}^{d} \ \forall n\geq 0,
\end{equation}
showing that it is also a $\F$-martingale.  $(V_{n})_{n\geq 0}$ is an increasing process, with increments $\Delta V_{n}$ whose expected value is determined by the variance of the approximation error of the stochastic oracle feedback. In terms of these increment processes, we establish the following fundamental recursion. 
\begin{lemma}
For all $x^{\ast}\in\scrX_{\ast}$ and all $n \geq 0$ we have 
\begin{equation}\label{eq:recursion}
\norm{X_{n+1}-x^{\ast}}^{2}\leq \norm{X_{n}-x^{\ast}}^{2}-\frac{\rho_{n}}{2}r_{\alpha_{n}}(X_{n})^{2}+\Delta U_{n}(x^{\ast})+\Delta V_{n} \quad \Pr-\mbox{a.s.}.
\end{equation}
\end{lemma}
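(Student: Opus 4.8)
The plan is to mirror Tseng's deterministic forward-backward-forward analysis, while carefully tracking the two stochastic error terms $W_{n+1}$ and $Z_{n+1}$ introduced by the two oracle queries. First I would expand the squared distance using the second forward step $X_{n+1}=Y_{n}+\alpha_{n}(A_{n+1}-B_{n+1})$:
\begin{equation*}
\norm{X_{n+1}-x^{\ast}}^{2}=\norm{Y_{n}-x^{\ast}}^{2}+2\alpha_{n}\inner{A_{n+1}-B_{n+1},Y_{n}-x^{\ast}}+\alpha_{n}^{2}\norm{A_{n+1}-B_{n+1}}^{2},
\end{equation*}
and treat the cross term and the quadratic term separately.

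For the cross term I would write $A_{n+1}-B_{n+1}=(T(X_{n})-T(Y_{n}))+(W_{n+1}-Z_{n+1})$. The operator part splits into $\inner{T(X_{n}),Y_{n}-x^{\ast}}-\inner{T(Y_{n}),Y_{n}-x^{\ast}}$; since $x^{\ast}\in\scrX_{\ast}$ and $Y_{n}\in\scrX$ give $\inner{T(x^{\ast}),Y_{n}-x^{\ast}}\geq 0$, \cref{ass:monotone} yields $\inner{T(Y_{n}),Y_{n}-x^{\ast}}\geq 0$, so this subtracted term may be dropped. The remaining $\inner{T(X_{n}),Y_{n}-x^{\ast}}$ I control through the projection characterization \cref{lem:projection}(i) applied at $Y_{n}=\Pi_{\scrX}(X_{n}-\alpha_{n}A_{n+1})$ with test point $x^{\ast}$, obtaining $\alpha_{n}\inner{A_{n+1},Y_{n}-x^{\ast}}\leq\inner{X_{n}-Y_{n},Y_{n}-x^{\ast}}$, and then convert the right-hand inner product to norms via \cref{lem:Pyth}. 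The crucial bookkeeping point is that substituting $T(X_{n})=A_{n+1}-W_{n+1}$ makes the $W_{n+1}$ contributions cancel exactly against those carried by $A_{n+1}-B_{n+1}$, leaving precisely $\Delta U_{n}(x^{\ast})=2\alpha_{n}\inner{Z_{n+1},x^{\ast}-Y_{n}}$ together with the telescoping norms $\norm{X_{n}-x^{\ast}}^{2}-\norm{X_{n}-Y_{n}}^{2}-\norm{Y_{n}-x^{\ast}}^{2}$.

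For the quadratic term I would apply $\norm{a+b}^{2}\leq 2\norm{a}^{2}+2\norm{b}^{2}$ twice: once to separate $2\norm{T(X_{n})-T(Y_{n})}^{2}\leq 2L^{2}\norm{X_{n}-Y_{n}}^{2}$ (\cref{ass:Lipschitz}) from the noise, and once to split $2\norm{W_{n+1}-Z_{n+1}}^{2}\leq 4\norm{W_{n+1}}^{2}+4\norm{Z_{n+1}}^{2}$. Combining the resulting $2L^{2}\alpha_{n}^{2}\norm{X_{n}-Y_{n}}^{2}$ with the $-\norm{X_{n}-Y_{n}}^{2}$ surviving from the cross term produces exactly $-\rho_{n}\norm{X_{n}-Y_{n}}^{2}$, where $\rho_{n}=1-2L^{2}\alpha_{n}^{2}>0$ by \cref{ass:step}.

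The final and most delicate step is to replace $\norm{X_{n}-Y_{n}}^{2}$ by the residual $r_{\alpha_{n}}(X_{n})^{2}=\norm{X_{n}-\bar{Y}_{n}}^{2}$, with $\bar{Y}_{n}\eqdef\Pi_{\scrX}(X_{n}-\alpha_{n}T(X_{n}))$. The obstacle here is genuinely stochastic and has no counterpart in the deterministic proof: $Y_{n}$ is built from the noisy estimator $A_{n+1}$ rather than from $T(X_{n})$, so $Y_{n}\neq\bar{Y}_{n}$ in general. I would control the discrepancy through the nonexpansiveness of the projection \cref{lem:projection}(iii), which gives $\norm{Y_{n}-\bar{Y}_{n}}\leq\alpha_{n}\norm{A_{n+1}-T(X_{n})}=\alpha_{n}\norm{W_{n+1}}$, and then use $\norm{X_{n}-\bar{Y}_{n}}^{2}\leq 2\norm{X_{n}-Y_{n}}^{2}+2\norm{Y_{n}-\bar{Y}_{n}}^{2}$ to deduce
\begin{equation*}
-\norm{X_{n}-Y_{n}}^{2}\leq-\tfrac{1}{2}r_{\alpha_{n}}(X_{n})^{2}+\alpha_{n}^{2}\norm{W_{n+1}}^{2}.
\end{equation*}
Multiplying by $\rho_{n}$ and collecting the noise contributions then assembles the coefficient $(4+\rho_{n})\alpha_{n}^{2}\norm{W_{n+1}}^{2}+4\alpha_{n}^{2}\norm{Z_{n+1}}^{2}=\Delta V_{n}$, yielding the recursion \eqref{eq:recursion}. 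Every estimate above is pathwise, so the bound holds $\Pr$-almost surely.
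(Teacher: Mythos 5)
Your proposal is correct and follows essentially the same route as the paper's proof: the same four ingredients — pseudo-monotonicity of $T$ evaluated at $Y_{n}$, the variational characterization of the projection defining $Y_{n}$ (\cref{lem:projection}(i)), Lipschitz continuity plus Young's inequality to bound $\alpha_{n}^{2}\norm{A_{n+1}-B_{n+1}}^{2}$, and nonexpansiveness of $\Pi_{\scrX}$ to pass from $\norm{X_{n}-Y_{n}}^{2}$ to the residual $r_{\alpha_{n}}(X_{n})^{2}$ — appear in the same roles and yield the identical coefficients $(4+\rho_{n})\alpha_{n}^{2}$ and $4\alpha_{n}^{2}$. The only difference is bookkeeping: you expand $\norm{X_{n+1}-x^{\ast}}^{2}$ directly around $Y_{n}$, so the $\norm{Y_{n}-x^{\ast}}^{2}$ terms and the $\inner{A_{n+1}-B_{n+1},X_{n}-Y_{n}}$ cross terms cancel automatically, whereas the paper reaches the same intermediate inequality via the Pythagorean identity applied to $(X_{n},X_{n+1},x^{\ast})$ in its Steps 2–3; this is an organizational, not a mathematical, difference.
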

\begin{proof}
This recursive relation follows via several simple algebraic steps. Let be $x^{\ast}\in\scrX_{\ast}$  and $n \geq 0$ fixed.
\paragraph{\underline{Step 1} } We have 
\begin{align*}
\inner{T(x^{\ast}),y-x^{\ast}}\geq 0\qquad\forall y\in\scrX. 
\end{align*}
Using that $\alpha_{n}>0$ as well as the pseudo-monotonicity of $T$, we see
\begin{align*}
\inner{\alpha_{n}T(Y_{n}),Y_{n}-x^{\ast}}\geq 0.
\end{align*}
Using the Doob decomposition in equation \eqref{eq:Doob}, we can rewrite this inequality as 
\begin{equation}
\label{eq:1}
\inner{\alpha_{n}B_{n+1},Y_{n}-x^{\ast}}\geq \alpha_{n}\inner{Z_{n+1},Y_{n}-x^{\ast}}. 
\end{equation}
Since $Y_{n}=\Pi_{\scrX}(X_{n}-\alpha_{n}A_{n+1})$, from \cref{lem:projection}(i) we conclude that 
\begin{equation}
\label{eq:2}
\inner{x^{\ast}-Y_{n},Y_{n}-X_{n}+\alpha_{n}A_{n+1}}\geq 0.
\end{equation}
Adding \eqref{eq:1} and \eqref{eq:2} gives
\begin{align*}
\inner{\alpha_{n}(A_{n+1}-B_{n+1})-X_{n}+Y_{n},x^{\ast}-Y_{n}}\geq\alpha_{n}\inner{Z_{n+1},Y_{n}-x^{\ast}},
\end{align*}
which is equivalent to 
\begin{equation}\label{eq:step1}
\inner{x^{\ast}-Y_{n},X_{n+1}-X_{n}}\geq\alpha_{n}\inner{Z_{n+1},Y_{n}-x^{\ast}}. 
\end{equation}
\paragraph{\underline{Step 2} }Using \eqref{eq:step1}, we get
\begin{align*}
\inner{X_{n+1}-X_{n},X_{n+1}-x^{\ast}} = & \inner{X_{n+1}-X_{n},Y_{n}-x^{\ast}}+\inner{X_{n+1}-X_{n},X_{n+1}-Y_{n}}\\
\leq & \inner{\alpha_{n}Z_{n+1},x^{\ast}-Y_{n}}+\norm{X_{n+1}-X_{n}}^{2}\\
& +\inner{X_{n+1}-X_{n},X_{n}-Y_{n}}\\
= & \inner{\alpha_{n}Z_{n+1},x^{\ast}-Y_{n}}+\norm{X_{n+1}-X_{n}}^{2}-\norm{X_{n}-Y_{n}}^{2}\\
&+\alpha_{n}\inner{A_{n+1}-B_{n+1},X_{n}-Y_{n}}
\end{align*}
where we have used the definition of  $X_{n+1}$ in the last equality. The Pythagoras identity in \cref{lem:Pyth} gives us
\begin{align*}
\norm{X_{n+1}-x^{\ast}}^{2}= & \norm{X_{n}-x^{\ast}}^{2}-\norm{X_{n+1}-X_{n}}^{2}+2\inner{X_{n+1}-X_{n},X_{n+1}-x^{\ast}}\\
\leq & \norm{X_{n}-x^{\ast}}^{2}+\norm{X_{n+1}-X_{n}}^{2}-2\norm{X_{n}-Y_{n}}^{2}\\
&+2\inner{\alpha_{n}Z_{n+1},x^{\ast}-Y_{n}}+2\alpha_{n}\inner{A_{n+1}-B_{n+1},X_{n}-Y_{n}}. 
\end{align*}
\paragraph{\underline{Step 3}. } Using again the definition of $X_{n+1}$, we see 
\begin{align*}
\norm{X_{n+1}-X_{n}}^{2} = & \norm{Y_{n}+\alpha_{n}(A_{n+1}-B_{n+1})-X_{n}}^{2}\\
= & \norm{X_{n}-Y_{n}}^{2}+\alpha_{n}^{2}\norm{A_{n+1}-B_{n+1}}^{2}+2\alpha_{n}\inner{A_{n+1}-B_{n+1},Y_{n}-X_{n}}\\
\leq & \norm{X_{n}-Y_{n}}^{2}+2\alpha_{n}^{2}\norm{T(X_{n})-T(Y_{n})}^{2}+2\alpha_{n}^{2}\norm{W_{n+1}-Z_{n+1}}^{2}\\
&+2\alpha_{n}\inner{A_{n+1}-B_{n+1},Y_{n}-X_{n}}\\
\leq & \norm{X_{n}-Y_{n}}^{2}+2L^{2}\alpha_{n}^{2}\norm{X_{n}-Y_{n}}^{2}+4\alpha_{n}^{2}\norm{W_{n+1}}^{2}+4\alpha_{n}^{2}\norm{Z_{n+1}}^{2}\\
&+2\alpha_{n}\inner{A_{n+1}-B_{n+1},Y_{n}-X_{n}}.
\end{align*}
The first inequality is the Cauchy-Schwarz inequality. The second inequality follows from the $L$-Lipschitz continuity of the averaged operator $T$ (\cref{ass:Lipschitz}), and again the Cauchy-Schwarz inequality. Combining this with the last inequality obtained in Step 2, we see that 
\begin{align*}
\norm{X_{n+1}-x^{\ast}}^{2}&\leq \norm{X_{n}-x^{\ast}}^{2}-(1-2L^{2}\alpha_{n}^{2})\norm{X_{n}-Y_{n}}^{2}+4\alpha_{n}^{2}\norm{W_{n+1}}^{2}\\
&+4\alpha_{n}^{2}\norm{Z_{n+1}}^{2}+2\inner{\alpha_{n}Z_{n+1},x^{\ast}-Y_{n}}. 
\end{align*}
\paragraph{\underline{Step 4} } By the definition of the squared residual function, the definition of $Y_{n}$ and \cref{lem:projection}(iii), we have 
\begin{align*}
r_{\alpha_{n}}(X_{n})^{2}&=\norm{X_{n}-\Pi_{\scrX}(X_{n}-\alpha_{n}T(X_{n}))}^{2}\\
&\leq 2\norm{X_{n}-Y_{n}}^{2}+2\norm{Y_{n}-\Pi_{\scrX}(X_{n}-\alpha_{n}T(X_{n}))}^{2}\\
&=2\norm{X_{n}-Y_{n}}^{2}+2\norm{\Pi_{\scrX}(X_{n}-\alpha_{n}A_{n+1})-\Pi_{\scrX}(X_{n}-\alpha_{n}T(X_{n})}^{2}\\
&\leq 2\norm{X_{n}-Y_{n}}^{2}+2\norm{\alpha_{n}W_{n+1}}^{2}.
\end{align*}
Hence, 
\begin{equation}
\label{eq:Step4}
-2\norm{X_{n}-Y_{n}}^{2}\leq 2\alpha_{n}^{2}\norm{W_{n+1}}^{2}-r_{\alpha_{n}}(X_{n})^{2}. 
\end{equation}
\paragraph{\underline{Step 5} } Combining \eqref{eq:Step4} with the last inequality from Step 3 and recalling \cref{ass:step}, we conclude
\begin{align*}
\norm{X_{n+1}-x^{\ast}}^{2} \leq & \norm{X_{n}-x^{\ast}}^{2}-\frac{1}{2}(1-2L^{2}\alpha_{n}^{2})r_{\alpha_{n}}(X_{n})^{2}+(1-2L^{2}\alpha_{n}^{2})\alpha_{n}^{2}\norm{W_{n+1}}^{2}\\
&+4\alpha_{n}^{2}\norm{W_{n+1}}^{2}+4\alpha_{n}\norm{Z_{n+1}}^{2}+2\inner{\alpha_{n}Z_{n+1},x^{\ast}-Y_{n}}\\
=& \norm{X_{n}-x^{\ast}}^{2}-\frac{\rho_{n}}{2}r_{\alpha_{n}}(X_{n})^{2}+(4+\rho_{n})(\alpha_n)^{2}\norm{W_{n+1}}^{2}+4\alpha_{n}^{2}\norm{Z_{n+1}}^{2}\\
&+2\inner{\alpha_{n}Z_{n+1},x^{\ast}-Y_{n}}.
\end{align*}
The definitions of the increments associated with the martingales $(U_{n}(x^{\ast}))_{n\geq 0}$ and $(V_{n})_{n\geq 0}$ give the claimed result. 
\end{proof} 
\begin{remark}
One can notice that in the above proof the pseudo-monotonicity of $T$ is used only in Step 1 of the above proof, if order to obtain relation \eqref{eq:1}. Thus, as happened in \cite{DanLan15,SolSva99}, the pseudo-monotonicity of $T$ can actually be replaced by the following weaker assumption
\begin{align*}
\inner{T(x),x-x^{\ast}}\geq 0\qquad\forall x\in\scrX,x^{\ast}\in\scrX_{\ast}.
\end{align*}
See also \cite{MerZho18} for a similar condition. 
\end{remark}
In the following, we let $p\geq 2$ be the exponent as specified in \cref{ass:variance}. Taking conditional expectations in equation \eqref{eq:recursion} and using the martingale property \eqref{eq:DeltaU}, we see for all $n\geq 0$ that
\begin{equation}
\label{eq:startFejer}
\Ex[\norm{X_{n+1}-x^{\ast}}^{2}\vert\scrF_{n}]\leq\norm{X_{n}-x^{\ast}}^{2}-\frac{\rho_{n}}{2}r_{\alpha_{n}}(X_{n})^{2}+\Ex[\Delta V_{n}\vert\scrF_{n}]. 
\end{equation}
In order to prove convergence of the process $(X_{n})_{n\geq 0}$, we aim to deduce a stochastic quasi-Fej\'{e}r relation. For that we need to understand the properties of the conditional expectation 
\[
\Ex[\Delta V_{n}\vert\scrF_{n}]=(4+\rho_{n})\alpha_{n}^{2}\Ex[\norm{W_{n+1}}^{2}\vert\scrF_{n}]+4\alpha_{n}^{2}\Ex[\norm{Z_{n+1}}^{2}\vert\scrF_{n}] \ \forall n \geq 0.  
\]
Let be $q\in[1,\infty]$. The monotonicity of $ L^{q}(\Pr) \eqdef L^{q}(\Omega,\scrF,\Pr;\R)$ norms gives $\Ex[\Delta V_{n}\vert\scrF_{n}]\leq \Ex[\abs{\Delta V_{n}}^{q}\vert\scrF_{n}]^{\frac{1}{q}}$ for all $n\geq 0$. By Minkowski inequality, 
\begin{align*}
\Ex[\abs{\Delta V_{n}}^{q}\vert\scrF_{n}]^{\frac{1}{q}}\leq (4+\rho_{n})\alpha_{n}^{2}\Ex[\norm{W_{n+1}}^{2q}\vert\scrF_{n}]^{1/q}+4\alpha_{n}^{2}\Ex[\norm{Z_{n+1}}^{2q}\vert\scrF_{n}]^{1/q} \ \forall n \geq 0.  
\end{align*}
The next lemma provides the required bounds for these expressions, and also highlights the implicit variance reduction of our method. 
\begin{lemma}\label{lem:Approx}
Let be $p' \in [2,p]$. For all $n\geq 0$ we have $\Pr$-a.s.
\begin{equation}\label{eq:boundW}
\Ex[\norm{W_{n+1}}^{p'}\vert\scrF_{n}]^{\frac{1}{p'}}\leq \frac{C_{p'}\left(\sigma(x^{\ast})+\sigma_{0}\norm{X_{n}-x^{\ast}}\right)}{\sqrt{m_{n+1}}}
\end{equation}
and 
\begin{equation}\label{eq:boundZ}
\Ex[\norm{Z_{n+1}}^{p'}\vert\scrF_{n}]^{\frac{1}{p'}}\leq \frac{C_{p'}}{\sqrt{m_{n+1}}}\left(\sigma(x^{\ast})+\sigma_{0}\Ex[\norm{Y_{n}-x^{\ast}}^{p'}\vert\scrF_{n}]^{\frac{1}{p'}}\right). 
\end{equation}
In particular, in case of \eqref{eq:UBV} with $\sigma_{0}=0$ and $\sup_{x\in\scrX_{\ast}}\sigma(x^{\ast})\leq \hat{\sigma}$, both approximation errors are bounded in $L^{p'}(\Pr)$ by the common factor $\frac{C_{p'}\hat{\sigma}}{\sqrt{m_{n+1}}}$.
\end{lemma}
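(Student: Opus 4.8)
The plan is to recognize each approximation error as a normalized sum of conditionally independent, mean-zero increments and to control its conditional moments via the Burkholder--Davis--Gundy inequality of \cref{lem:BDG}.

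First I would prove \eqref{eq:boundW}. Fix $n$ and write $W_{n+1} = \frac{1}{m_{n+1}} \sum_{i=1}^{m_{n+1}} D_i$ with $D_i \eqdef F(X_n, \xi^{(i)}_{n+1}) - T(X_n)$. Conditionally on $\scrF_n$ the point $X_n$ is frozen while the samples $\xi^{(i)}_{n+1}$ are i.i.d.\ draws from $\measP$ independent of $\scrF_n$, so the partial sums $U_k \eqdef \sum_{i=1}^k D_i$ form a martingale with respect to the filtration $\scrG_k \eqdef \sigma(\scrF_n, \xi^{(1)}_{n+1}, \ldots, \xi^{(k)}_{n+1})$, with $\scrG_0 = \scrF_n$: indeed, by independence of the fresh sample and $\scrF_n$-measurability of $X_n$ one has $\Ex[D_{k+1} \vert \scrG_k] = T(X_n) - T(X_n) = 0$. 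Applying the $L^{p'}$ form of \cref{lem:BDG} conditionally on $\scrG_0 = \scrF_n$ to this martingale (the universal constant $C_{p'}$ being distribution-free, the inequality transfers verbatim to the conditional expectation) and using $\norm{U_{m_{n+1}}} = m_{n+1} \norm{W_{n+1}} \leq \sup_{0 \leq k \leq m_{n+1}} \norm{U_k}$, I obtain
\begin{equation*}
m_{n+1} \Ex[\norm{W_{n+1}}^{p'} \vert \scrF_n]^{1/p'} \leq C_{p'} \sqrt{\sum_{k=1}^{m_{n+1}} \Ex[\norm{D_k}^{p'} \vert \scrF_n]^{2/p'}}.
\end{equation*}
Since the $D_k$ are conditionally identically distributed with $\Ex[\norm{D_k}^{p'} \vert \scrF_n]^{1/p'} = s_{p'}(X_n)$, and since the monotonicity of $L^q(\Pr)$ norms gives $s_{p'}(x) \leq s_p(x)$ for $p' \leq p$, \cref{ass:variance} yields $s_{p'}(X_n) \leq \sigma(x^{\ast}) + \sigma_0 \norm{X_n - x^{\ast}}$. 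Substituting and dividing by $m_{n+1}$ produces exactly \eqref{eq:boundW}.

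For \eqref{eq:boundZ} I would run the identical argument with $Z_{n+1} = \frac{1}{m_{n+1}} \sum_{i=1}^{m_{n+1}} [F(Y_n, \eta^{(i)}_{n+1}) - T(Y_n)]$, now conditioning on $\hat{\scrF}_n$: here $Y_n$ is $\hat{\scrF}_n$-measurable and the fresh draws $\eta^{(i)}_{n+1}$ are independent of $\hat{\scrF}_n$, so the same martingale construction gives
\begin{equation*}
\Ex[\norm{Z_{n+1}}^{p'} \vert \hat{\scrF}_n]^{1/p'} \leq \frac{C_{p'}\left(\sigma(x^{\ast}) + \sigma_0 \norm{Y_n - x^{\ast}}\right)}{\sqrt{m_{n+1}}}.
\end{equation*}
To pass from $\hat{\scrF}_n$ to the coarser $\scrF_n \subseteq \hat{\scrF}_n$, I raise this to the power $p'$ and apply the tower property $\Ex[\,\cdot\,\vert\scrF_n] = \Ex[\Ex[\,\cdot\,\vert\hat{\scrF}_n]\vert\scrF_n]$, then take the $p'$-th root and invoke the conditional Minkowski inequality \eqref{eq:Minkowski} (splitting the constant term $\sigma(x^{\ast})$ from the term $\sigma_0\norm{Y_n - x^{\ast}}$), which delivers \eqref{eq:boundZ}. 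The final \eqref{eq:UBV} claim is then immediate: setting $\sigma_0 = 0$ and bounding $\sigma(x^{\ast}) \leq \hat{\sigma}$ collapses both right-hand sides to $C_{p'}\hat{\sigma}/\sqrt{m_{n+1}}$.

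The main obstacle is the rigorous passage to a \emph{conditional} BDG inequality: \cref{lem:BDG} is stated for unconditional expectations, so I must justify that, after freezing $\scrF_n$ (respectively $\hat{\scrF}_n$) and identifying the partial-sum martingale of the fresh samples, the same universal constant governs the conditional moments. This hinges on the fact that the BDG constant depends only on $p'$ and neither on the underlying distribution nor on the filtration, so the inequality may be applied on the regular conditional law; the remaining steps (moment monotonicity, Minkowski, and the tower property) are routine.
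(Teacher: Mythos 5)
Your proof is correct and takes essentially the same approach as the paper's: the paper establishes the unconditional bound of \cref{lem:M} for the partial-sum martingale at a fixed point $x$ via the Burkholder--Davis--Gundy inequality combined with Minkowski and the monotonicity of $L^{q}(\Pr)$ norms, then substitutes $x = X_{n}$ (resp.\ $x = Y_{n}$) using the independence of the fresh mini-batch from $\scrF_{n}$ (resp.\ $\hat{\scrF}_{n}$) — which is exactly your conditional-BDG argument carried out at a frozen point — and its passage from $\hat{\scrF}_{n}$ to $\scrF_{n}$ for $Z_{n+1}$ via the tower property and conditional Minkowski matches yours step for step. The only cosmetic difference is that you apply BDG under the regular conditional law, whereas the paper applies it unconditionally and then freezes the argument; both rest on the same independence fact.
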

\begin{proof}
See \cref{app:A}
\end{proof}
Let be $p'\geq 2$ and $n\geq 0$. We have 
\begin{align*}
\Ex[\norm{Y_{n}-x^{\ast}}^{p'}\vert\scrF_{n}]^{\frac{1}{p'}}\leq (1+\alpha_{n}L)\norm{X_{n}-x^{\ast}}+\alpha_{n}\Ex[\norm{W_{n+1}}^{p'}\vert\scrF_{n}]^{1/p'}.
\end{align*}
Hence, combining this with \eqref{eq:boundW} for $p'\in[2,p]$ as in \cref{lem:Approx}, we see that 
\begin{align*}
\Ex[\norm{Y_{n}-x^{\ast}}^{p'}\vert\scrF_{n}]^{\frac{1}{p'}}&\leq (1+\alpha_{n}L)\norm{X_{n}-x^{\ast}}+\alpha_{n}\frac{C_{p'}\left(\sigma(x^{\ast})+\sigma_{0}\norm{X_{n}-x^{\ast}}\right)}{\sqrt{m_{n+1}}}\\
&=\left(1+\alpha_{n}L+\alpha_{n}\frac{C_{p'}\sigma_{0}}{\sqrt{m_{n+1}}}\right)\norm{X_{n}-x^{\ast}}+\alpha_{n}\frac{C_{p'}\sigma(x^{\ast})}{\sqrt{m_{n+1}}}. 
\end{align*}
Plugging this inequality into  \eqref{eq:boundZ}, after rearranging the terms we see that
\begin{align*}
\Ex[\norm{Z_{n+1}}^{p'}\vert\scrF_{n}]^{\frac{1}{p'}}\leq & \frac{C_{p'}\sigma(x^{\ast})}{\sqrt{m_{n+1}}}\left(1+\alpha_{n}\frac{\sigma_{0}C_{p'}}{\sqrt{m_{n+1}}}\right)\\
&+\norm{X_{n}-x^{\ast}}\frac{C_{p'}\sigma_{0}}{\sqrt{m_{n+1}}}\left(1+\alpha_{n}L+\alpha_{n}\frac{C_{p'}\sigma_{0}}{\sqrt{m_{n+1}}}\right).
\end{align*}
We denote
\begin{equation}\label{eq:G}
G_{n,p}\eqdef\frac{C_{p}}{\sqrt{m_{n+1}}}, 
\end{equation}
such that, for all $n\geq 0$ and $p'\in[2,p]$ we obtain the expressions
\begin{align}
\label{eq:W}
\Ex[\norm{W_{n+1}}^{p'}\vert\scrF_{n}]^{\frac{1}{p'}} \leq & G_{n,p'}\left(\sigma(x^{\ast})+\sigma_{0}\norm{X_{n}-x^{\ast}}\right),\\
\label{eq:Z}
\Ex[\norm{Z_{n+1}}^{p'}\vert\scrF_{n}]^{\frac{1}{p'}} \leq  & \sigma(x^{\ast})G_{n,p'}(1+\alpha_{n}\sigma_{0}G_{n,p'})\\
\nonumber
& +\sigma_{0}G_{n,p'}\norm{X_{n}-x^{\ast}}(1+\alpha_{n}L+\alpha_{n}\sigma_{0}G_{n,p'}),\\
\label{eq:Y}
\Ex[\norm{Y_{n}-x^{\ast}}^{p'}\vert\scrF_{n}]^{\frac{1}{p'}} \leq & (1+\alpha_{n}L+\alpha_{n}\sigma_{0}G_{n,p'})\norm{X_{n}-x^{\ast}}+\alpha_{n}\sigma(x^{\ast})G_{n,p'}. 
\end{align}
In case of a \eqref{eq:UBV}, we obtain from the above estimates simple upper bounds, by setting $\sigma_{0}=0$, and replacing $\sigma(x^{\ast})$ with the uniform upper bound $\hat{\sigma}$. We next use these derived expressions to obtain $L^{q}(\Pr)$ bounds for the error increments $(\Delta U_{n}(x^{\ast}))_{n\geq 1}$ and $(\Delta V_{n})_{n\geq 1}$, when $q\in[1,p/2]$.
\begin{lemma}
Let \cref{ass:variance} be fulfilled with $p \geq 2$. For $p'\in[2,p]$, $q=\frac{p'}{2}\geq 1$ and all $n\geq 0$ we have
\begin{align}\label{eq:V}
\Ex[\abs{\Delta V_{n}}^{q}\vert\scrF_{n}]^\frac{1}{q} \leq & \ \alpha^{2}_{n}G_{n,p'}^{2}\sigma(x^{\ast})^{2}[2(4+\rho_{n})+16+16\alpha^{2}_{n}\sigma^{2}_{0}G_{n,p'}^{2}] \nonumber \\
&+\alpha^{2}_{n}G_{n,p'}^{2}\sigma^{2}_{0}\norm{X_{n}-x^{\ast}}^{2} \nonumber\\
&\quad  \times [2(4+\rho_{n})+8(1+\alpha_{n}L+\alpha_{n}\sigma_{0}G_{n,p'})^{2}]
\end{align}
and 
\begin{align}\label{eq:U}
& \Ex[\abs{\Delta U_{n}(x^{\ast})}^{q}\vert\scrF_{n}]^\frac{1}{q} \nonumber \\
\leq & \ 2\alpha^{2}_{n}G_{n,p'}^{2}\sigma(x^{\ast})^{2}(1+\alpha_{n}G_{n,p'}\sigma_{0}) \nonumber \\
& + 2\alpha_{n}G_{n,p'}\sigma(x^{\ast})\norm{X_{n}-x^{\ast}}[1+\alpha_{n}L+\alpha_{n}\sigma_{0}G_{n,p'}(3+2\alpha_{n}L)+2\alpha^{2}_{n}\sigma_{0}^{2}G^{2}_{n,p'}] \nonumber \\
& + 2\alpha_{n}G_{n,p'}\sigma_{0}\norm{X_{n}-x^{\ast}}^{2}(1+\alpha_{n}L+\alpha_{n}\sigma_{0}G_{n,p'})^{2}.
\end{align}
If \eqref{eq:UBV} holds with variance bound $\hat{\sigma}$, then these upper bounds simplify to 
\begin{align}
\label{eq:exptectedVUVB}
\Ex[\abs{\Delta V_{n}}^{q}\vert\scrF_{n}]^\frac{1}{q}&\leq \alpha^{2}_{n}\hat{\sigma}^{2}G^{2}_{n,p'}(8+\rho_{n})
\end{align}
and, respectively,
\begin{align}\label{eq:exptectedVUVB2}
\Ex[\abs{\Delta U_{n}(x^{\ast})}^{q}\vert\scrF_{n}]^\frac{1}{q}&\leq 2\alpha_{n}\hat{\sigma}G_{n,p'}(1+L\alpha_{n})\norm{X_{n}-x^{\ast}}+2\alpha^{2}_{n}\hat{\sigma}^{2}G^{2}_{n,p'}. 
\end{align}
\end{lemma}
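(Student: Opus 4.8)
The plan is to treat the two conditional $L^{q}$-moments separately, the whole point being that the exponent choice $q=p'/2$, i.e.\ $2q=p'$, lets every genuinely second-order quantity appearing in $\Delta V_{n}$ and $\Delta U_{n}(x^{\ast})$ be controlled by the $p'$-th order bounds \eqref{eq:W}, \eqref{eq:Z}, \eqref{eq:Y} already in hand. For \eqref{eq:V} I would start from the definition \eqref{eq:DeltaV} and apply the conditional Minkowski inequality \eqref{eq:Minkowski} to get
\[
\Ex[\abs{\Delta V_{n}}^{q}\vert\scrF_{n}]^{1/q}\leq (4+\rho_{n})\alpha_{n}^{2}\Ex[\norm{W_{n+1}}^{2q}\vert\scrF_{n}]^{1/q}+4\alpha_{n}^{2}\Ex[\norm{Z_{n+1}}^{2q}\vert\scrF_{n}]^{1/q}.
\]
Since $2q=p'$, each factor on the right is the square of the corresponding quantity bounded in \eqref{eq:W} and \eqref{eq:Z}. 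From here it is pure algebra: use $(a+b)^{2}\leq 2a^{2}+2b^{2}$ to separate the $\sigma(x^{\ast})^{2}$- and $\norm{X_{n}-x^{\ast}}^{2}$-contributions, and once more the estimate $(1+\alpha_{n}\sigma_{0}G_{n,p'})^{2}\leq 2(1+\alpha_{n}^{2}\sigma_{0}^{2}G_{n,p'}^{2})$ on the purely $\sigma(x^{\ast})^{2}$ part, after which collecting coefficients produces exactly \eqref{eq:V}.

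For \eqref{eq:U} I would first apply Cauchy--Schwarz inside the inner product of $\Delta U_{n}(x^{\ast})=2\alpha_{n}\inner{Z_{n+1},x^{\ast}-Y_{n}}$, giving $\abs{\Delta U_{n}(x^{\ast})}\leq 2\alpha_{n}\norm{Z_{n+1}}\norm{Y_{n}-x^{\ast}}$, and then the conditional Cauchy--Schwarz inequality (H\"older with exponents $2,2$) relative to $\scrF_{n}$,
\[
\Ex[\norm{Z_{n+1}}^{q}\norm{Y_{n}-x^{\ast}}^{q}\vert\scrF_{n}]^{1/q}\leq \Ex[\norm{Z_{n+1}}^{2q}\vert\scrF_{n}]^{1/(2q)}\,\Ex[\norm{Y_{n}-x^{\ast}}^{2q}\vert\scrF_{n}]^{1/(2q)}.
\]
Again $2q=p'$, so the two factors are precisely the quantities bounded in \eqref{eq:Z} and \eqref{eq:Y}; substituting and expanding the product of the $Z$-bound and the $Y$-bound, then regrouping by powers of $\norm{X_{n}-x^{\ast}}$, reproduces \eqref{eq:U} term by term (the middle bracket being exactly $(1+\alpha_{n}L+\alpha_{n}\sigma_{0}G_{n,p'})(1+2\alpha_{n}\sigma_{0}G_{n,p'})$). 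A cleaner variant that exposes the martingale structure is to condition first on $\hat{\scrF}_{n}$, pull $\norm{Y_{n}-x^{\ast}}$ out since $Y_{n}$ is $\hat{\scrF}_{n}$-measurable, bound $\Ex[\norm{Z_{n+1}}^{p'}\vert\hat{\scrF}_{n}]^{1/p'}\leq G_{n,p'}(\sigma(x^{\ast})+\sigma_{0}\norm{Y_{n}-x^{\ast}})$, and only afterwards pass to $\scrF_{n}$ by the tower property together with \eqref{eq:Y}; this yields the same three terms.

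For the \eqref{eq:UBV} specializations the two cases behave differently, which is worth flagging. Since the derivation of \eqref{eq:U} involved no wasteful $(a+b)^{2}$ splitting, \eqref{eq:exptectedVUVB2} follows simply by setting $\sigma_{0}=0$ and $\sigma(x^{\ast})=\hat{\sigma}$ in \eqref{eq:U}: the $\norm{X_{n}-x^{\ast}}^{2}$-term drops and the rest collapses directly. For $\Delta V_{n}$, however, plugging $\sigma_{0}=0$ into \eqref{eq:V} gives only the weaker constant $24+2\rho_{n}$; the sharp bound \eqref{eq:exptectedVUVB} must instead be obtained by rerunning the short Minkowski computation with the common estimate $\Ex[\norm{W_{n+1}}^{p'}\vert\scrF_{n}]^{1/p'},\ \Ex[\norm{Z_{n+1}}^{p'}\vert\scrF_{n}]^{1/p'}\leq G_{n,p'}\hat{\sigma}$, where no splitting is needed and the coefficient collapses to $(4+\rho_{n})+4=8+\rho_{n}$.

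There is no real conceptual obstacle here; the statement is a bookkeeping consequence of \eqref{eq:W}, \eqref{eq:Z} and \eqref{eq:Y}. The only points requiring genuine care are the systematic use of $2q=p'$ to reduce every quadratic quantity to an available $p'$-th moment, the correct choice of conditioning filtration in the $\Delta U_{n}(x^{\ast})$ estimate, and the observation that the tight \eqref{eq:UBV} constant for $\Delta V_{n}$ comes from a direct recomputation rather than from specializing \eqref{eq:V}.
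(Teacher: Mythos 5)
Your proposal is correct and follows essentially the same route as the paper: conditional Minkowski together with the moment bounds \eqref{eq:W}, \eqref{eq:Z}, \eqref{eq:Y} for $\Delta V_{n}$; Cauchy--Schwarz combined with those same bounds for $\Delta U_{n}(x^{\ast})$ (your ``cleaner variant'' that conditions first on $\hat{\scrF}_{n}$, pulls out $\norm{Y_{n}-x^{\ast}}$, and then passes to $\scrF_{n}$ by the tower property is literally the paper's argument, and your direct $\scrF_{n}$-conditional H\"older version yields the identical expansion); and, for the \eqref{eq:UBV} case, specialization of \eqref{eq:U} for the $U$-bound but a direct rerun of the Minkowski computation with the uniform estimate $G_{n,p'}\hat{\sigma}$ for the $V$-bound. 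Your explicit observation that the sharp constant $8+\rho_{n}$ in \eqref{eq:exptectedVUVB} cannot be recovered by setting $\sigma_{0}=0$ in \eqref{eq:V} (which only gives $24+2\rho_{n}$) is precisely what the paper's terse ``follows immediately from the defining expression \eqref{eq:DeltaV}'' is doing, just stated more carefully.
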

\begin{proof} Let be $n \geq 0$. For $q\geq 1$, we know that 
\begin{align*}
\Ex[\abs{\Delta V_{n}}^{q}\vert\scrF_{n}]^\frac{1}{q}\leq (4+\rho_{n})\alpha^{2}_{n}\Ex[\norm{W_{n+1}}^{p'}\vert\scrF_{n}]^\frac{2}{p'}+4\alpha^{2}_{n}\Ex[\norm{Z_{n+1}}^{p'}\vert\scrF_{n}]^\frac{2}{p'}.
\end{align*}
Using \eqref{eq:W} and \eqref{eq:Z}, and rearranging terms, we obtain \eqref{eq:V}. On the other hand, we have by definition 
\begin{align*}
\Ex[\abs{\Delta U_{n}(x^{\ast})}^{q}\vert\hat{\scrF}_{n}]^\frac{1}{q}&\leq 2\alpha_{n}\norm{Y_{n}-x^{\ast}}\cdot \Ex[\norm{Z_{n+1}}^{q}\vert\hat{\scrF}_{n}]^\frac{1}{q}\\
&  \leq 2\alpha_{n}\norm{Y_{n}-x^{\ast}}\cdot \Ex[\norm{Z_{n+1}}^{p'}\vert\hat{\scrF}_{n}]^\frac{1}{p'}\\
 & \leq  2\alpha_{n}\norm{Y_{n}-x^{\ast}}G_{n,p'}\sigma(x^{\ast})+2\alpha_{n}G_{n,p'}\sigma_{0}\norm{Y_{n}-x^{\ast}}^{2},
 \end{align*}
where the first estimate follows from the Cauchy-Schwarz inequality, the second one uses the monotonicity of $L^{q}(\Pr)$ norms, and the third one uses eq. \eqref{eq:Z1}. Applying the operator $\Ex[\cdot\vert\scrF_{n}]$ on both sides, and using again the monotonicity of $L^{q}(\Pr)$ norms, we obtain 
\begin{align*}
\Ex[\abs{\Delta U_{n}(x^{\ast})}^{q}\vert\scrF_{n}]^\frac{1}{q} \leq & \ 2\alpha_{n}G_{n,p'}\sigma(x^{\ast})\Ex[\norm{Y_{n}-x^{\ast}}^{q}\vert\scrF_{n}]^\frac{1}{q}\\
& +2\alpha_{n}G_{n,p'}\sigma_{0}\Ex[\norm{Y_{n}-x^{\ast}}^{p'}\vert\scrF_{n}]^\frac{2}{p'}\\
\leq &  \ 2\alpha_{n}G_{n,p'}\sigma(x^{\ast})\Ex[\norm{Y_{n}-x^{\ast}}^{p'}\vert\scrF_{n}]^\frac{1}{p'}\\
&+2\alpha_{n}G_{n,p'}\sigma_{0}\Ex[\norm{Y_{n}-x^{\ast}}^{p'}\vert\scrF_{n}]^\frac{2}{p'}.
\end{align*}
After applying \eqref{eq:Y} and rearranging terms we arrive at the expression \eqref{eq:U}.

In case \ac{UBV} holds with uniform variance bound $\hat{\sigma}$, the upper bound for $\abs{\Delta V_{n+1}}^{q}$ follows immediately from the defining expression \eqref{eq:DeltaV}  by using the uniform bounds $\frac{C_{p'}\hat{\sigma}}{\sqrt{m_{n+1}}}=G_{n,p'}\hat{\sigma}$ for the quadratic error terms $\norm{W_{n+1}}^{2}$ and $\norm{Z_{n+1}}^{2}$. The corresponding bound for $\abs{\Delta U_{n}(x^{\ast})}^{q}$ is obtained from \eqref{eq:U} by setting $\sigma_{0}=0$ and replacing $\sigma(x^{\ast})$ by its uniform upper bound $\hat{\sigma}$. 
\end{proof}
Based on the previous estimates, we can now derive the announced stochastic quasi-Fej\'{e}r inequality for the sequence $\left(\norm{X_{n}-x^{\ast}}^{2}\right)_{n\geq 0}$. 
\begin{proposition}\label{prop:Fejer}
For all $x^{\ast}\in\scrX_{\ast}$ and all $n \geq 0$, we have 
\begin{equation}\label{eq:Fejer}
\Ex[\norm{X_{n+1}-x^{\ast}}^{2}\vert\scrF_{n}]\leq\norm{X_{n}-x^{\ast}}^{2}-\frac{\rho_{n}}{2}r_{\alpha_{n}}(X_{n})^{2}+\frac{\kappa_{n}}{m_{n+1}}\left[\sigma_{0}^{2}\norm{X_{n}-x^{\ast}}^{2}+\sigma(x^{\ast})^{2}\right],
\end{equation}
where 
\begin{equation*}
\kappa_{n}\eqdef \alpha^{2}_{n}C_{2}^{2}[2(4+\rho_{n})+16(1+\alpha_{n}L+\alpha_{n}\sigma_{0}G_{n,2})^{2}].
\end{equation*}
If \eqref{eq:UBV} holds with uniform variance bound $\hat{\sigma}$, then
\begin{equation}\label{eq:Fejeruniform}
\Ex[\norm{X_{n+1}-x^{\ast}}^{2}\vert\scrF_{n}]\leq\norm{X_{n}-x^{\ast}}^{2}-\frac{\rho_{n}}{2}r_{\alpha_{n}}(X_{n})^{2}+\frac{\kappa_{n}\hat{\sigma}^{2}}{m_{n+1}},
\end{equation}
where now $\kappa_{n}=\alpha^{2}_{n}C_{2}^{2}(8+\rho_{n})$. 
\end{proposition}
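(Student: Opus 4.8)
The plan is to start directly from the conditional recursion \eqref{eq:startFejer}, which already isolates the descent term $-\frac{\rho_n}{2}r_{\alpha_n}(X_n)^2$ and thereby reduces the whole proposition to controlling the single residual contribution $\Ex[\Delta V_n\vert\scrF_n]$. The key preliminary observation is that, under \cref{ass:step}, one has $\rho_n = 1-2L^2\alpha_n^2>0$, so that $\Delta V_n$ is a \emph{nonnegative} random variable; consequently $\Ex[\Delta V_n\vert\scrF_n]=\Ex[\abs{\Delta V_n}\vert\scrF_n]$, and we may invoke the bound \eqref{eq:V} in the borderline case $p'=2$, $q=1$, which is always admissible because $p\geq 2$.

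Substituting $p'=2$ into \eqref{eq:V} produces two groups of terms, one proportional to $\sigma(x^{\ast})^2$ and one to $\sigma_0^2\norm{X_n-x^{\ast}}^2$, each carrying its own bracketed coefficient. The only genuine work is to dominate both brackets by the common factor appearing in the definition of $\kappa_n$. For the $\norm{X_n-x^{\ast}}^2$ term this is immediate, since its bracket $2(4+\rho_n)+8(1+\alpha_n L+\alpha_n\sigma_0 G_{n,2})^2$ is bounded by $2(4+\rho_n)+16(1+\alpha_n L+\alpha_n\sigma_0 G_{n,2})^2$ merely because $8\leq 16$. For the $\sigma(x^{\ast})^2$ term I would use the elementary inequality $1+\alpha_n^2\sigma_0^2 G_{n,2}^2\leq(1+\alpha_n L+\alpha_n\sigma_0 G_{n,2})^2$, which holds because expanding the right-hand square yields exactly $1+\alpha_n^2\sigma_0^2 G_{n,2}^2$ plus the manifestly nonnegative cross terms $\alpha_n^2 L^2+2\alpha_n L+2\alpha_n\sigma_0 G_{n,2}+2\alpha_n^2 L\sigma_0 G_{n,2}$; multiplying through by $16$ upgrades the bracket $2(4+\rho_n)+16+16\alpha_n^2\sigma_0^2 G_{n,2}^2$ to the same common factor. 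After this step both groups share the coefficient $2(4+\rho_n)+16(1+\alpha_n L+\alpha_n\sigma_0 G_{n,2})^2$, and recalling $G_{n,2}^2=C_2^2/m_{n+1}$ one factors out $\alpha_n^2 C_2^2/m_{n+1}$ and reads off precisely $\frac{\kappa_n}{m_{n+1}}[\sigma_0^2\norm{X_n-x^{\ast}}^2+\sigma(x^{\ast})^2]$; combined with \eqref{eq:startFejer} this gives \eqref{eq:Fejer}.

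For the \eqref{eq:UBV} specialization I would not pass through the general estimate at all, but instead invoke the sharper bound \eqref{eq:exptectedVUVB} (again with $p'=2$, $q=1$), which already reads $\Ex[\abs{\Delta V_n}\vert\scrF_n]\leq\alpha_n^2\hat{\sigma}^2 G_{n,2}^2(8+\rho_n)$; substituting $G_{n,2}^2=C_2^2/m_{n+1}$ delivers $\kappa_n=\alpha_n^2 C_2^2(8+\rho_n)$ and hence \eqref{eq:Fejeruniform}. The argument is thus essentially bookkeeping layered on top of the previous lemma; the only points requiring a moment of care are verifying the positivity $\rho_n>0$ (so that replacing $\Delta V_n$ by $\abs{\Delta V_n}$ at $q=1$ is harmless) and the scalar inequality that merges the two distinct coefficient brackets into the single constant $\kappa_n$.
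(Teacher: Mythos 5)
Your proposal is correct and follows essentially the same route as the paper's own proof: both start from \eqref{eq:startFejer}, invoke \eqref{eq:V} at the exponent $p'=2$, $q=1$, merge the two coefficient brackets into $2(4+\rho_n)+16(1+\alpha_n L+\alpha_n\sigma_0 G_{n,2})^2$ via exactly the two scalar inequalities you state, and handle the \eqref{eq:UBV} case by combining \eqref{eq:startFejer} with \eqref{eq:exptectedVUVB}. Your explicit expansion of $(1+\alpha_n L+\alpha_n\sigma_0 G_{n,2})^2$ and the remark on the nonnegativity of $\Delta V_n$ are minor elaborations of steps the paper leaves implicit.
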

\begin{proof}
Let be $x^{\ast}\in\scrX_{\ast}$ and $n \geq 0$. Our point of departure is \eqref{eq:startFejer}, together with \eqref{eq:V}. From here we derive that
\begin{align*}
& \Ex[\norm{X_{n+1}-x^{\ast}}^{2}\vert\scrF_{n}]\\
\leq & \ \norm{X_{n}-x^{\ast}}^{2}-\frac{\rho_{n}}{2}r_{\alpha_{n}}(X_{n})^{2}\\
&+\alpha^{2}_{n}G_{n,2}^{2}\sigma(x^{\ast})^{2}[2(4+\rho_{n})+16+16\alpha^{2}_{n}\sigma^{2}_{0}G_{n,2}^{2}]\\
&+\alpha^{2}_{n}G_{n,2}^{2}\sigma^{2}_{0}\norm{X_{n}-x^{\ast}}^{2}[2(4+\rho_{n})+8(1+\alpha_{n}L+\alpha_{n}\sigma_{0}G_{n,p2})^{2}]\\
\leq & \ \norm{X_{n}-x^{\ast}}^{2}-\frac{\rho_{n}}{2}r_{\alpha_{n}}(X_{n})^{2}\\
&+\left(\sigma^{2}_{0}\norm{X_{n}-x^{\ast}}^{2}+\sigma(x^{\ast})^{2}\right)\left[2(4+\rho_{n})+16(1+\alpha_{n}L+\alpha_{n}\sigma_{0}G_{n,2})^{2}\right]\alpha_{n}^{2}G_{n,2}^{2}. 
\end{align*}
In the last equality, we have used that $2(4+\rho_{n})+8(1+\alpha_{n}L+\alpha_{n}\sigma_{0}G_{n,2})^{2}\leq 2(4+\rho_{n})+16(1+\alpha_{n}L+\alpha_{n}\sigma_{0}G_{n,2})^{2}$, and that $2(4+\rho_{n})+16+16\alpha_{n}^{2}\sigma_{0}^{2}G_{n,2}^{2}\leq 2(4+\rho_{n})+16(1+\alpha_{n}L+\alpha_{n}\sigma_{0}G_{n,2})^{2}$. Recalling that $G_{n,2}=C_{2}/\sqrt{m_{n+1}}$, the proof is complete.

In the case where \eqref{eq:UBV} holds, we just have to combine \eqref{eq:startFejer} with \eqref{eq:exptectedVUVB} to obtain the claimed result. 
\end{proof}

\begin{remark}\label{rem:kappa}
The scaling factor $\kappa_{n}$ only depends on the step size $\alpha_{n}$, the Lipschitz constant $L$, and the variance bound on the stochastic oracle. Let $\bar{\alpha}\eqdef\sup_{n\geq 0}\alpha_{n}$ and $\underline{\alpha}\eqdef\inf_{n\geq 0}\alpha_{n}$ (both finite and positive according to \cref{ass:step}). Using the definition of $\rho_{n}$ in \eqref{eq:rho}, we can bound
\begin{align*}
\kappa_{n}&=\alpha^{2}_{n}C_{2}^{2}\left[2(4+\rho_{n})+16(1+\alpha_{n}L+\frac{\alpha_{n}\sigma_{0}C_{2}}{\sqrt{m_{n+1}}})^{2}\right]\\
&\leq \alpha_{n}^{2}C_{2}^{2}\left[10+32(1+\alpha_{n}L)^2+32\alpha^{2}_{n}\sigma^{2}_{0}\frac{C_{2}^{2}}{m_{n+1}}\right]\\
&\leq \bar{\alpha}^{2}C_{2}^{2}\const_{1}\left[1+\frac{\bar{\alpha}^{2}\sigma_{0}^{2}C_{2}^{2}}{m_{n+1}}\right] \ \forall n \geq 0,
\end{align*}
where $\const_{1}>1$ is a constant. Combined with the batch size condition \eqref{eq:m}, we obtain the existence of constants $\const_{0}$ and $\const_{1}$ such that 
\begin{align*}
\kappa_{n}\leq \const_{1}\left(1+\frac{\bar{\alpha}^{2}\sigma_{0}^{2}C_{2}^{2}}{\const_{0}(n+n_{0})^{1+a}\ln(n+n_{0})^{1+b}}\right)
\end{align*}
for all $n\gg n_{0}$. Such non-asymptotic bounds will be used in the estimation of the rate of convergence of the algorithm. 

\end{remark}

Next we will prove that the process $(X_{n})_{n\geq 0}$ converges a.s. to a random variable $X$ with values in the set $\scrX_{\ast}$. This will be obtained as a consequence of the classical Robbins-Siegmund \cref{lem:RS}, and recent results on the convergence of stochastic quasi-F\'{e}jer monotone sequences (Proposition 2.3 in \cite{ComPes15}).

Given a stochastic process $(f_{n})_{n\geq 0} \subseteq L^{0}(\Omega,\scrF,\Pr;\mathbb{R}^{d})$, we define the (random) set of cluster points 
\begin{equation*}
\Lim(f)(\omega)\eqdef \{x\in\mathbb{R}^{d}\vert (\exists(n_{j})\uparrow\infty):\lim_{n_{j}\to\infty}f_{n_{j}}(\omega)=x\} 
\end{equation*}
\begin{theorem}
\label{th:converge}
Consider the stochastic process $(X_{n},Y_{n})_{n\geq 0}$ generated by Algorithm \ac{SFBF} under Assumptions \ref{ass:Consistent}-\ref{ass:variance}. Then, $(X_{n})_{n\geq 0}$ converges as $n \rightarrow \infty$ almost surely to a limit random variable $X$ with values in $\scrX_{\ast}$, and $\lim_{n\to\infty}\Ex[r_{\alpha_{n}}(X_{n})^{2}]=0.$ 
\end{theorem}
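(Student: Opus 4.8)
The plan is to read the statement as a textbook application of stochastic quasi-Fej\'er theory, using the recursion \eqref{eq:Fejer} of \cref{prop:Fejer} as the engine and the Robbins-Siegmund \cref{lem:RS} as the convergence principle, and then to upgrade subsequential information to full convergence by an Opial-type argument. First I would cast \eqref{eq:Fejer}, for a fixed $x^{\ast}\in\scrX_{\ast}$, into the form
\[
\Ex[\norm{X_{n+1}-x^{\ast}}^{2}\vert\scrF_{n}]\leq\Bigl(1+\tfrac{\kappa_{n}\sigma_{0}^{2}}{m_{n+1}}\Bigr)\norm{X_{n}-x^{\ast}}^{2}-\tfrac{\rho_{n}}{2}r_{\alpha_{n}}(X_{n})^{2}+\tfrac{\kappa_{n}\sigma(x^{\ast})^{2}}{m_{n+1}}.
\]
Setting $v_{n}=\norm{X_{n}-x^{\ast}}^{2}$, $u_{n}=\tfrac{\rho_{n}}{2}r_{\alpha_{n}}(X_{n})^{2}$, $\theta_{n}=\kappa_{n}\sigma_{0}^{2}/m_{n+1}$ and $\beta_{n}=\kappa_{n}\sigma(x^{\ast})^{2}/m_{n+1}$, the hypotheses of \cref{lem:RS} are met: by \cref{rem:kappa} the $\kappa_{n}$ are uniformly bounded, so the (deterministic) sequences $\theta_{n},\beta_{n}$ are dominated by a constant multiple of $m_{n+1}^{-1}$ and hence summable by \cref{ass:batch}. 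A preliminary induction, using the boundedness of $\kappa_{n}$ and $m_{n+1}\geq 1$, guarantees $\Ex[\norm{X_{n}-x^{\ast}}^{2}]<\infty$ for every $n$, so all conditional expectations are legitimate. \cref{lem:RS} then gives, for each fixed $x^{\ast}$, a.s.\ convergence of $\norm{X_{n}-x^{\ast}}^{2}$ together with $\sum_{n}\rho_{n}r_{\alpha_{n}}(X_{n})^{2}<\infty$; since \cref{ass:step} forces $\rho_{n}\geq 1-2L^{2}\bar{\alpha}^{2}>0$, this yields $r_{\alpha_{n}}(X_{n})\to 0$ a.s.

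Next I would locate the cluster points of $(X_{n})$. The a.s.\ convergence of $\norm{X_{n}-x^{\ast}}^{2}$ makes $(X_{n})$ a.s.\ bounded, hence it possesses cluster points. Using the ordering of the merit-function family, $r_{\underline{\alpha}}(X_{n})\leq r_{\alpha_{n}}(X_{n})\to 0$ a.s., and since $x\mapsto r_{\underline{\alpha}}(x)=\norm{x-\Pi_{\scrX}(x-\underline{\alpha}T(x))}$ is continuous (the Lipschitz map $T$ composed with the nonexpansive projector), every cluster point $\bar{x}$ of $(X_{n})$ satisfies $r_{\underline{\alpha}}(\bar{x})=0$, i.e.\ $\bar{x}\in\scrX_{\ast}$ by \cref{lem:projection}(iv). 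The reduction to the fixed residual $r_{\underline{\alpha}}$ is what lets me pass to the limit despite the step size $\alpha_{n}$ itself varying along the subsequence.

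The main obstacle is promoting these two facts — a.s.\ convergence of $\norm{X_{n}-x^{\ast}}^{2}$ for each fixed $x^{\ast}$, and membership of all cluster points in $\scrX_{\ast}$ — into a.s.\ convergence of the entire sequence, since the exceptional null set in the first fact a priori depends on $x^{\ast}$. I would resolve this exactly as in \cite[Proposition 2.3]{ComPes15}: choose a countable dense subset $D\subseteq\scrX_{\ast}$, intersect the countably many full-measure events to obtain a single event on which $\norm{X_{n}-x^{\ast}}$ converges for all $x^{\ast}\in D$, and extend this convergence to every $x^{\ast}\in\scrX_{\ast}$ by a triangle-inequality (Cauchy) argument exploiting boundedness of $(X_{n})$. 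Fixing $\omega$ in this event and two cluster points $\bar{x}_{1},\bar{x}_{2}\in\scrX_{\ast}$, the quantity $\inner{X_{n},\bar{x}_{1}-\bar{x}_{2}}$ converges (expand $\norm{X_{n}-\bar{x}_{1}}^{2}-\norm{X_{n}-\bar{x}_{2}}^{2}$); evaluating this limit along the two subsequences converging to $\bar{x}_{1}$ and $\bar{x}_{2}$ forces $\norm{\bar{x}_{1}-\bar{x}_{2}}^{2}=0$. Thus the cluster point is unique and $X_{n}\to X$ a.s.\ with $X$ valued in $\scrX_{\ast}$.

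For the $L^{2}$ residual statement I would take total expectations in \eqref{eq:Fejer}, giving $a_{n+1}\leq(1+\theta_{n})a_{n}-\tfrac{\rho_{n}}{2}\Ex[r_{\alpha_{n}}(X_{n})^{2}]+\beta_{n}$ with $a_{n}=\Ex[\norm{X_{n}-x^{\ast}}^{2}]$ and the same summable $\theta_{n},\beta_{n}$. The deterministic analogue of \cref{lem:RS} yields $\sup_{n}a_{n}<\infty$ and $\sum_{n}\rho_{n}\Ex[r_{\alpha_{n}}(X_{n})^{2}]<\infty$; invoking $\rho_{n}\geq 1-2L^{2}\bar{\alpha}^{2}>0$ once more gives $\sum_{n}\Ex[r_{\alpha_{n}}(X_{n})^{2}]<\infty$, whence $\Ex[r_{\alpha_{n}}(X_{n})^{2}]\to 0$, completing the proof.
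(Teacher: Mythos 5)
Your proposal is correct, and its skeleton coincides with the paper's: recast \eqref{eq:Fejer} as a Robbins--Siegmund recursion, deduce a.s.\ convergence of $\norm{X_{n}-x^{\ast}}^{2}$ together with $\sum_{n}\rho_{n}r_{\alpha_{n}}(X_{n})^{2}<\infty$ (hence $r_{\alpha_{n}}(X_{n})\to 0$ a.s., using $\rho_{n}\geq 1-2L^{2}\bar{\alpha}^{2}>0$), identify every cluster point as a solution, upgrade to a.s.\ convergence of the whole sequence, and obtain $\Ex[r_{\alpha_{n}}(X_{n})^{2}]\to 0$ by taking total expectations and running the deterministic Robbins--Siegmund argument. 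You deviate in two local steps, both valid. First, to place cluster points in $\scrX_{\ast}$, the paper extracts a further subsequence along which $\alpha_{n_{j}}\to\alpha\in[\underline{\alpha},\bar{\alpha}]$ and passes to the limit in $\Pi_{\scrX}(X_{n_{j}}-\alpha_{n_{j}}T(X_{n_{j}}))$, whereas you use the monotonicity of the merit family $a\mapsto r_{a}(x)$ (recalled at the opening of \cref{sec:converge}) to write $r_{\underline{\alpha}}(X_{n})\leq r_{\alpha_{n}}(X_{n})\to 0$ and then apply continuity of the single fixed residual $r_{\underline{\alpha}}$ together with \cref{lem:projection}(iv); this avoids handling a varying step size in the limit and is arguably cleaner. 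Second, where the paper invokes Proposition 2.3 of \cite{ComPes15} as a ready-made result to pass from ``$\norm{X_{n}-x^{\ast}}$ converges a.s.\ for each fixed $x^{\ast}$'' plus ``all cluster points lie in $\scrX_{\ast}$ a.s.'' to convergence of the entire sequence, you reprove that proposition (countable dense $D\subseteq\scrX_{\ast}$, intersection of the countably many null sets, triangle-inequality extension of the convergence to all of $\scrX_{\ast}$, Opial-type uniqueness of the cluster point); this buys a self-contained argument at the cost of length. A minor point in your favour: your preliminary induction establishing $\Ex[\norm{X_{n}-x^{\ast}}^{2}]<\infty$ for every $n$, which legitimizes both the conditional recursion and the final passage to total expectations, is a technicality the paper leaves implicit.
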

\begin{proof}
We fix an element $x^{\ast}\in\scrX_{\ast}$. Let $\delta_{n}(x^{\ast})\eqdef \norm{X_{n}-x^{\ast}}^{2},u_{n}\eqdef\frac{\rho_{n}}{2}r_{\alpha_{n}}(X_{n})^{2},\theta_{n} \!\eqdef \frac{\kappa_{n}\sigma_{0}^{2}}{m_{n+1}}$, and $\beta_{n}=\frac{\kappa_{n}\sigma(x^{\ast})^{2}}{m_{n+1}}$, so that \eqref{eq:Fejer} can be rewritten for all $n \geq 0$ as 
\begin{align*}
\Ex[\delta_{n+1}(x^{\ast})\vert\scrF_{n}]\leq (1+\theta_{n})\delta_{n}(x^{\ast})-u_{n}+\beta_{n}\qquad \Pr-\text{a.s.  }.
\end{align*}
Hence, by \cref{lem:RS}, there exists a random variable $\hat{\delta}(x^{\ast})\in[0,\infty)$ such that $(\delta_{n}(x^{\ast}))_{n\geq 1}\to\hat{\delta}(x^{\ast})$ a.s. as $n \rightarrow \infty$, and  $\Pr\left[\sum_{n\geq 0}u_{n}<\infty\right]=1$. In particular, $(X_{n})_{n\geq 0}$ is bounded for almost every $\omega\in\Omega$. Since $\sum_{n\geq 0}u_{n}=\sum_{n\geq 0}\rho_{n}r_{\alpha_{n}}(X_{n})^{2}\geq \hat{\rho}\sum_{n\geq 0}r_{\alpha_{n}}(X_{n})^{2}$, where $\hat{\rho}=1-2\bar{\alpha}^2L^{2}>0$, it follows that $\lim_{n\to\infty}r_{\alpha_{n}}(X_{n})=0$ $\Pr-$a.s. 

We next show that for all $\omega \in \Omega$ all limit points of $(X_{n}(\omega))_{n\geq 0}$ are points in $\scrX_{\ast}$, and then apply Proposition 2.3(iii) to conclude that $(X_{n})_{n}$ converges almost surely to a random variable $X$ with values in $\scrX_{\ast}$. Let $\omega\in\Omega$ be such that $X_{n}(\omega)$ is bounded. Since $(\alpha_{n})_{n \geq 0}$ is bounded as well, we can construct subsequences $(\alpha_{n_{j}})_{j \geq 0}$ and $(X_{n_{j}}(\omega))_{j \geq 0}$ such that $\lim_{j\to\infty}\alpha_{n_{j}}=\alpha\in[\underline{\alpha},\bar{\alpha}]$ and $\lim_{j\to\infty}X_{n_{j}}(\omega)=\chi(\omega)$. Additionally, we have $\lim_{j\to\infty}r_{\alpha_{n_{j}}}(X_{n_{j}}(\omega))=0$, so that 
\begin{align*}
\lim_{j\to\infty} X_{n_{j}}(\omega)=\lim_{j\to\infty}\Pi_{\scrX}(X_{n_{j}}(\omega)-\alpha_{n_{j}}T(X_{n_{j}}(\omega))).
\end{align*}
Therefore, by continuity of the projection operator and of the averaged map $T$, \cref{lem:projection}(iv) allows us to conclude that $\chi(\omega)\in\scrX_{\ast}$. Since the subsequence is arbitrary, it follows that $\Lim((X_{n})_{n\geq 0})(\omega)\subseteq \scrX_{\ast}$ for $\Pr$-almost all $\omega\in\Omega$. Now apply Proposition 2.3(iv) of \cite{ComPes15} to conclude that $X_{n}\to X\in L^{0}(\Omega,\scrF,\Pr;\scrX_{\ast})\; \Pr-$a.s.

To prove that $r_{\alpha_{n}}(X_{n})$ converges to $0$ in mean square as $n \rightarrow \infty$, observe first that 
\begin{align*}
\Ex[\delta_{n+1}(x^{\ast})]\leq\Ex[\delta_{n}(x^{\ast})]-\frac{\rho_{n}}{2}\Ex[r_{\alpha_{n}}(X_{n})^{2}]+\frac{\kappa_{n}}{m_{n+1}}\left(\sigma_{0}^{2}\Ex[\delta_{n}(x^{\ast})]+\sigma(x^{\ast})^{2}\right) \ \forall n \geq 0.
\end{align*}
Let $z_{n}=\Ex[\delta_{n}(x^{\ast})],u_{n}=\frac{\rho_{n}}{2}\Ex[r_{\alpha_{n}}(X_{n})^{2}]$ and $\theta_{n}$ and $\beta_{n}$ be defined as in the previous paragraph. The deterministic version of the Robbins-Siegmund \cref{lem:RS} gives $(u_{n})_{n\geq 1}\in\ell^{1}_{+}(\N)$. Hence, $\lim_{n\to\infty}\Ex[r_{\alpha_{n}}(X_{n})^{2}]=0$.
\end{proof}

\cref{th:converge} considerably strengthens similar results obtained via different splitting techniques. For \ac{SEG}, asymptotic convergence of the iterates in the sense of \cref{th:converge} is established in Theorem 3 of \cite{IusJofOliTho17}. However, different to \ac{SFBF}, \ac{SEG} requires two costly projection steps, with the same number of oracle calls. This makes Algorithm \ac{SFBF} a potentially more efficient tool, and we will demonstrate that this is actually the case empirically, as well as theoretically. Under strong monotonicity assumptions, a version of \cref{th:converge} has been recently established for a stochastic version of the classical forward-backward splitting technique in \cite{RosVilVu16}, assuming a similar variance structure on the stochastic oracle as we do. Theorem {th:converge} shows convergence of \ac{SFBF} under the much weaker assumption of  pseudo-monotonicity of the mean operator $T$. 

We close this section by reporting an improved stochastic quasi-Fej\'{e}r property in terms of the distance to the solution set $\scrX_{\ast}$. 
\begin{proposition}\label{prop:uniformFejer}
Suppose that Assumptions \ref{ass:Consistent}-\ref{ass:variance} hold. For $x^{\ast}\in\scrX_{\ast}$ set $\hat{\sigma}(x^{\ast})\eqdef \max\{\sigma(x^{\ast}),\sigma_{0}\}$, and  define $\dist(x,\scrX_{\ast})\eqdef \inf_{y\in\scrX_{\ast}}\norm{y-x}=\norm{\Pi_{\scrX_{\ast}}(x)-x}$. For all $n\geq 0$ it holds
\begin{align*}
\Ex[\dist(X_{n+1},\scrX_{\ast})^{2}\vert\scrF_{n}] \leq & \dist(X_{n},\scrX_{\ast})^{2}-\frac{\rho_{n}}{2}r_{\alpha_{n}}(X_{n})^{2}\\
& +\frac{\kappa_{n}\hat{\sigma}(\Pi_{\scrX_{\ast}}(X_{n}))^{2}}{m_{n+1}}[1+\dist(X_{n},\scrX_{\ast})^{2}]. 
\end{align*}
If \eqref{eq:UBV} holds, then we get for all $n \geq 0$ the uniform bound 
\begin{equation*}
\Ex[\dist(X_{n+1},\scrX_{\ast})^{2}\vert\scrF_{n}]\leq\dist(X_{n},\scrX_{\ast})^{2}-\frac{\rho_{n}}{2}r_{\alpha_{n}}(X_{n})^{2}+\frac{\kappa_{n}\hat{\sigma}^{2}}{m_{n+1}},
\end{equation*}
with $\kappa_{n}=\alpha^{2}_{n}C^{2}_{2}(8+\rho_{n})$. 
\end{proposition}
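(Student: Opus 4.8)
The plan is to upgrade the pointwise Fej\'er inequality of \cref{prop:Fejer} from a fixed reference point $x^{\ast}$ to the distance $\dist(X_{n+1},\scrX_{\ast})$. The key observation is that \eqref{eq:Fejer} holds for \emph{every} $x^{\ast}\in\scrX_{\ast}$ $\Pr$-a.s., so I am free to specialize the reference point to the one that minimizes the distance from the previous iterate, namely $x^{\ast}=\Pi_{\scrX_{\ast}}(X_{n})$. First I would note that $\dist(X_{n+1},\scrX_{\ast})^{2}=\norm{X_{n+1}-\Pi_{\scrX_{\ast}}(X_{n+1})}^{2}\leq \norm{X_{n+1}-\Pi_{\scrX_{\ast}}(X_{n})}^{2}$ by definition of the metric projection (any point of $\scrX_{\ast}$ is a feasible competitor for the projection of $X_{n+1}$). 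Taking conditional expectation of this inequality and applying \eqref{eq:Fejer} with the \emph{data-dependent} choice $x^{\ast}=\Pi_{\scrX_{\ast}}(X_{n})$ yields
\begin{align*}
\Ex[\dist(X_{n+1},\scrX_{\ast})^{2}\vert\scrF_{n}]
&\leq \Ex[\norm{X_{n+1}-\Pi_{\scrX_{\ast}}(X_{n})}^{2}\vert\scrF_{n}]\\
&\leq \dist(X_{n},\scrX_{\ast})^{2}-\frac{\rho_{n}}{2}r_{\alpha_{n}}(X_{n})^{2}
+\frac{\kappa_{n}}{m_{n+1}}\bigl[\sigma_{0}^{2}\dist(X_{n},\scrX_{\ast})^{2}+\sigma(\Pi_{\scrX_{\ast}}(X_{n}))^{2}\bigr],
\end{align*}
since $\norm{X_{n}-\Pi_{\scrX_{\ast}}(X_{n})}^{2}=\dist(X_{n},\scrX_{\ast})^{2}$. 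Using $\hat{\sigma}(x^{\ast})\eqdef\max\{\sigma(x^{\ast}),\sigma_{0}\}$, both $\sigma_{0}^{2}$ and $\sigma(\Pi_{\scrX_{\ast}}(X_{n}))^{2}$ are bounded above by $\hat{\sigma}(\Pi_{\scrX_{\ast}}(X_{n}))^{2}$, so the bracket collapses to $\hat{\sigma}(\Pi_{\scrX_{\ast}}(X_{n}))^{2}[1+\dist(X_{n},\scrX_{\ast})^{2}]$, which is exactly the claimed inequality.

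The only point requiring genuine care is the measurability/adaptedness issue created by letting the reference point depend on $X_{n}$. The inequality \eqref{eq:Fejer} is an almost-sure inequality valid simultaneously for each \emph{deterministic} $x^{\ast}\in\scrX_{\ast}$, whereas $\Pi_{\scrX_{\ast}}(X_{n})$ is an $\scrF_{n}$-measurable random vector. To legitimize the substitution I would invoke the standard ``freezing'' argument: since $X_{n}$ (hence $\Pi_{\scrX_{\ast}}(X_{n})$) is $\scrF_{n}$-measurable and the conditioning in \eqref{eq:Fejer} is on $\scrF_{n}$, one may condition on $\scrF_{n}$ and treat the random reference point as a constant inside the conditional expectation. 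Concretely, because the martingale increments $\Delta U_{n}(x)$ and the bound on $\Ex[\Delta V_{n}\vert\scrF_{n}]$ were derived as $\scrF_{n}$-conditional estimates holding for every fixed $x^{\ast}$, and the dependence on $x^{\ast}$ enters only through $\scrF_{n}$-measurable quantities, the inequality transfers verbatim to the $\scrF_{n}$-measurable selection $x^{\ast}=\Pi_{\scrX_{\ast}}(X_{n})$. This is the main (if minor) obstacle; everything else is substitution and the elementary projection inequality.

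The \ac{UBV} specialization is immediate: when \eqref{eq:UBV} holds with $\sigma_{0}=0$ and $\sup_{x\in\scrX_{\ast}}\sigma(x^{\ast})\leq\hat{\sigma}$, the $\dist(X_{n},\scrX_{\ast})^{2}$ term inside the bracket vanishes and $\sigma(\Pi_{\scrX_{\ast}}(X_{n}))^{2}\leq\hat{\sigma}^{2}$, so I would simply apply \eqref{eq:Fejeruniform} with $x^{\ast}=\Pi_{\scrX_{\ast}}(X_{n})$ together with the same projection inequality to recover the stated uniform bound with $\kappa_{n}=\alpha_{n}^{2}C_{2}^{2}(8+\rho_{n})$.
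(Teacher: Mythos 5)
Your proposal is correct and follows essentially the same route as the paper's proof: bound $\dist(X_{n+1},\scrX_{\ast})^{2}$ by $\norm{X_{n+1}-\Pi_{\scrX_{\ast}}(X_{n})}^{2}$, apply \cref{prop:Fejer} with the $\scrF_{n}$-measurable reference point $\pi_{n}=\Pi_{\scrX_{\ast}}(X_{n})$, and collapse the variance bracket via $\hat{\sigma}(\pi_{n})=\max\{\sigma(\pi_{n}),\sigma_{0}\}$. If anything, you are more explicit than the paper about justifying the substitution of a random reference point (the paper only remarks that $(\pi_{n})_{n\geq 0}$ is adapted), so your treatment is a slightly more careful rendering of the same argument.
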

\begin{proof}
Let be $\pi_{n}(\omega)=\Pi_{\scrX_{\ast}}(X_{n}(\omega))$ for all $n\geq 0$ and all $\omega\in\Omega$. Since the projection operator onto the closed and convex set $\scrX_{\ast}$ is nonexpansive, we have $(\pi_{n})_{n \geq 0}\in\ell^{0}(\F)$. For all $n \geq 0$ we have
\begin{align*}
\dist(X_{n+1},\scrX_{\ast})^{2}&\leq \norm{X_{n+1}-\pi_{n}}^{2}\\
&\leq \norm{X_{n}-\pi_{n}}^{2}-\frac{\rho_{n}}{2}r_{\alpha_{n}}(X_{n})+\frac{\kappa_{n}}{m_{n+1}}[\sigma^{2}_{0}\norm{X_{n}-\pi_{n}}^{2}+\sigma(\pi_{n})^{2}]\\
&=\dist(X_{n},\scrX_{\ast})^{2}-\frac{\rho_{n}}{2}r_{\alpha_{n}}(X_{n})+\frac{\kappa_{n}\hat{\sigma}^{2}(\pi_{n})}{m_{n+1}}[\dist(X_{n},\scrX_{\ast})^{2}+1],
\end{align*}
where the second inequality uses \cref{prop:Fejer}.
\end{proof}
\section{Complexity analysis and rates}
\label{sec:complexity}
The next two propositions provide explicit norm bounds on the iterates $(X_{n})_{n\geq 0}$. These bounds are going to be crucial to assess the convergence rate and the per-iteration complexity of the proposed algorithm. To be sure, the formal appearance of the complexity estimates derived in this section is naturally similar to the corresponding bounds derived in \cite{IusJofOliTho17}. However, the key observation we would like to emphasize here is that an explicit comparison between the constants involved in the upper bounds obtained for Algorithm \ac{SFBF} with those appearing in \ac{SEG} shows that the constants are consistently smaller. This indicates that \ac{SFBF} should empirically outperform \ac{SEG}. This fact is consistently observed in all our numerical experiments, and, as we show in \cref{sec:numerics}, actually this promised gain can be quite significant. 

\begin{proposition}\label{prop:psi-bound}
Suppose that Assumptions \ref{ass:Consistent}-\ref{ass:variance} hold. For all $x^{\ast}\in\scrX_{\ast}$ let
\begin{align}\label{eq:sigma}
\hat{\sigma}(x^{\ast})&\eqdef \max\{\sigma(x^{\ast}),\sigma_{0}\},\\
\ta(x^{\ast})&\eqdef \hat{\sigma}^{2}(x^{\ast})\bar{\alpha}^{2}C_{2}^{2}\const_{1}.
\label{eq:ax}
\end{align}
Choose $n_{0}\in\N$ and $\gamma>0$ such that 
\begin{equation}\label{eq:n0}
\sum_{n\geq n_{0}}\frac{1}{m_{n+1}}\leq\gamma
\end{equation}
and 
\begin{equation}\label{eq:beta}
\beta(x^{\ast})\eqdef \gamma \ta(x^{\ast})+ \gamma^2\ta(x^{\ast})^{2} \in(0,1). 
\end{equation}
Then 
\begin{equation}\label{eq:psi-bound}
\sup_{n\geq n_{0}+1}\Ex[\norm{X_{n}-x^{\ast}}^{2}]\leq \frac{\Ex[\norm{X_{n_{0}}-x^{\ast}}^{2}]+1}{1-\beta(x^{\ast})}.
\end{equation}
\end{proposition}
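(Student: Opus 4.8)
The plan is to reduce the assertion to a deterministic scalar recursion for $z_n \eqdef \Ex[\norm{X_n - x^{\ast}}^2]$ and then run a discrete Gr\"onwall/telescoping argument that converts the summability of the perturbation coefficients (guaranteed by \cref{ass:batch}) into a uniform bound. First I would take the full expectation in the stochastic quasi-Fej\'er inequality \eqref{eq:Fejer} of \cref{prop:Fejer} via the tower property. Since $\rho_n>0$ under \cref{ass:step}, the residual term $-\tfrac{\rho_n}{2}r_{\alpha_n}(X_n)^2$ is nonpositive and may be discarded, so that, writing $\theta_n\eqdef\frac{\kappa_n\sigma_0^2}{m_{n+1}}$ and $\beta_n\eqdef\frac{\kappa_n\sigma(x^{\ast})^2}{m_{n+1}}$,
\[
z_{n+1}\leq(1+\theta_n)z_n+\beta_n\qquad\forall n\geq 0.
\]
A preliminary induction (base case $z_0<\infty$ from the $L^2$-initialization, step using finiteness of $\theta_n,\beta_n$) shows every $z_n$ is finite, which legitimizes integrating the conditional inequality.

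Next I would quantify the accumulated perturbations. Using $\sigma_0\leq\hat{\sigma}(x^{\ast})$ and $\sigma(x^{\ast})\leq\hat{\sigma}(x^{\ast})$ together with the bound $\kappa_n\leq\bar{\alpha}^2C_2^2\const_1\bigl(1+\frac{\bar{\alpha}^2\sigma_0^2C_2^2}{m_{n+1}}\bigr)$ from \cref{rem:kappa}, and the observation that $\bar{\alpha}^2\sigma_0^2C_2^2\leq\ta(x^{\ast})$ (because $\const_1>1$ and $\sigma_0\leq\hat{\sigma}(x^{\ast})$), both coefficients obey
\[
\theta_n,\ \beta_n\ \leq\ \frac{\ta(x^{\ast})}{m_{n+1}}\Bigl(1+\frac{\ta(x^{\ast})}{m_{n+1}}\Bigr)=\frac{\ta(x^{\ast})}{m_{n+1}}+\frac{\ta(x^{\ast})^2}{m_{n+1}^2}.
\]
Summing over $n\geq n_0$ and invoking \eqref{eq:n0} together with the elementary inequality $\sum_{n\geq n_0}m_{n+1}^{-2}\leq\bigl(\sum_{n\geq n_0}m_{n+1}^{-1}\bigr)^2\leq\gamma^2$ (valid since the terms are nonnegative), I obtain $\sum_{n\geq n_0}\theta_n\leq\gamma\,\ta(x^{\ast})+\gamma^2\ta(x^{\ast})^2=\beta(x^{\ast})$ and likewise $\sum_{n\geq n_0}\beta_n\leq\beta(x^{\ast})$.

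Finally I would close the estimate by a running-maximum device. Telescoping the recursion from $n_0$ to $m-1$ gives $z_m\leq z_{n_0}+\sum_{k=n_0}^{m-1}\theta_k z_k+\sum_{k=n_0}^{m-1}\beta_k$. Setting $S_N\eqdef\max_{n_0\leq k\leq N}z_k$ (finite by the induction above) and bounding $z_k\leq S_N$ in the first sum, every $m\in\{n_0,\dots,N\}$ satisfies $z_m\leq z_{n_0}+\beta(x^{\ast})S_N+\beta(x^{\ast})$; taking the maximum over $m$ yields the self-referential inequality $S_N\leq z_{n_0}+\beta(x^{\ast})S_N+\beta(x^{\ast})$. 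Since $\beta(x^{\ast})\in(0,1)$ by \eqref{eq:beta} and $S_N<\infty$, I may rearrange to $S_N\leq\frac{z_{n_0}+\beta(x^{\ast})}{1-\beta(x^{\ast})}$ uniformly in $N$, and then use $\beta(x^{\ast})\leq 1$ to reach the stated bound (in fact for the supremum over all $n\geq n_0$, which is slightly stronger than over $n\geq n_0+1$).

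The main obstacle is precisely this last step: the term $\theta_n z_n$ multiplies the unknown $z_n$, so one cannot sum the recursion directly. The running-maximum trick is what converts the accumulated coefficient bound $\sum_{n\geq n_0}\theta_n\leq\beta(x^{\ast})<1$ into a genuine uniform estimate, and the strict inequality $\beta(x^{\ast})<1$ secured by the choice of $n_0,\gamma$ in \eqref{eq:beta} is exactly what produces the resolvent factor $(1-\beta(x^{\ast}))^{-1}$. A secondary point requiring mild care is verifying that $S_N$ is finite before rearranging, which is why I would record the preliminary finiteness induction explicitly.
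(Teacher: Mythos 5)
Your proof is correct, and its skeleton coincides with the paper's: both take expectations in \eqref{eq:Fejer}, drop the nonpositive residual term, bound the perturbation coefficients via \cref{rem:kappa} by $\frac{\ta(x^{\ast})}{m_{n+1}}+\frac{\ta(x^{\ast})^{2}}{m_{n+1}^{2}}$, telescope, use \eqref{eq:n0} to accumulate these into $\beta(x^{\ast})$, and close with a self-referential inequality absorbed by the factor $(1-\beta(x^{\ast}))^{-1}$. Where you genuinely diverge is in how the finiteness needed to justify that last absorption step is secured. The paper argues by contradiction with the stopping times $\tau_{p}(x^{\ast})\eqdef\inf\{n\geq n_{0}+1 : \psi_{n}(x^{\ast})\geq p\}$: assuming every $\tau_{p}$ is finite, the telescoped bound forces $p$ below a fixed constant, and letting $p\to\infty$ yields a contradiction, so $\sup_{n\geq n_{0}+1}\psi_{n}(x^{\ast})<\infty$; only then does it rerun the telescoping with this supremum. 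You instead prove each $z_{n}$ finite by a one-step induction (which, as a side benefit, cleanly legitimizes integrating the conditional inequality) and work with the finite-horizon maximum $S_{N}=\max_{n_{0}\leq k\leq N}z_{k}$, which is automatically finite, so the rearrangement $S_{N}(1-\beta(x^{\ast}))\leq z_{n_{0}}+\beta(x^{\ast})$ needs no contradiction argument and the bound follows uniformly in $N$. Your route is more elementary and arguably tidier on the measure-theoretic side; the paper's stopping-time device is the one it reuses verbatim in \cref{prop:delta-bound} and the complexity section, which is presumably why it is phrased that way. Both arguments rely on the same two facts — $\sum_{n\geq n_{0}}m_{n+1}^{-1}\leq\gamma$, $\sum_{n\geq n_{0}}m_{n+1}^{-2}\leq\gamma^{2}$ — and on $\beta(x^{\ast})<1$ from \eqref{eq:beta}, and both deliver \eqref{eq:psi-bound} (yours even for $n=n_{0}$).
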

\begin{proof}
We first remark that, thanks to \cref{ass:batch}, for every $\gamma>0$ we can find an index $n_{0}\in\N$ such that \eqref{eq:n0} holds. Consequently, we fix $n_{0}\in\N$ to be the smallest positive integer so that \eqref{eq:n0} holds. For all $n \geq 0$ we denote $\psi_{n}(x^{\ast})\eqdef \Ex[\norm{X_{n}-x^{\ast}}^{2}]$.  From \cref{prop:Fejer}, we obtain 
\begin{align*}
\psi_{n+1}(x^{\ast})\leq \psi_{n}(x^{\ast})-\frac{\rho_{n}}{2}\Ex[r_{\alpha_{n}}(X_{n})^{2}]+\frac{\kappa_{n}}{m_{n+1}}[\sigma_{0}^{2}\psi_{n}(x^{\ast})+\sigma(x^{\ast})^{2}] \ \forall n \geq 0.
\end{align*}
Recall from Remark \ref{rem:kappa} that 
\begin{align*}
\kappa_{n}\leq \bar{\alpha}^{2}C_{2}^{2}\const_{1}\left(1+\frac{\bar{\alpha}^{2}\sigma_{0}^{2}C_{2}^{2}}{m_{n+1}}\right)\leq\bar{\alpha}^{2}C_{2}^{2}\const_{1}\left (1+\frac{\ta(x^{\ast})}{\const_{1}m_{n+1}}\right).
\end{align*}
Using this bound, for all $n \geq n_0+1$ the previous display telescopes to 
\begin{align*}
\psi_{n}(x^{\ast})\leq\psi_{n_{0}}(x^{\ast})+\sum_{k=n_{0}}^{n-1}(1+\psi_{k}(x^{\ast}))\frac{\ta(x^{\ast})}{m_{k+1}}+\sum_{k=n_{0}}^{n-1}(1+\psi_{k}(x^{\ast}))\frac{\ta(x^{\ast})^2}{\const_{1}m^2_{k+1}}.
\end{align*}
For $p>\psi_{n_{0}}(x^{\ast})$, define $\tau_{p}(x^{\ast})\eqdef\inf\{n\geq n_{0}+1\vert \psi_{n}(x^{\ast})\geq p\} \in \mathbb{N} \cup \{+\infty\}$. We claim that there exists $\hat{p}>\psi_{n_{0}}(x^{\ast})$ such that $\tau_{\hat{p}}(x^{\ast})=\infty$. Assuming that this is not the case, then we must have that $\tau_{p}(x^{\ast})<\infty$ for all $p>\psi_{n_{0}}(x^{\ast})$. Therefore, by definition of $\tau_{p}(x^{\ast})$ and \eqref{eq:n0}, we get 
\begin{align*}
p\leq  \psi_{\tau_{p}(x^{\ast})}(x^{\ast})\leq & \ \psi_{n_{0}}(x^{\ast})+\sum_{k=n_{0}}^{\tau_{p}(x^{\ast})-1}(1+\psi_{k}(x^{\ast}))\frac{\ta(x^{\ast})}{m_{k+1}}\\
& +\sum_{k=n_{0}}^{\tau_{p}(x^{\ast})-1}(1+\psi_{k}(x^{\ast}))\frac{1}{\const_{1}}\left(\frac{\ta(x^{\ast})}{m_{k+1}}\right)^{2}\\
\leq & \ \psi_{n_{0}}(x^{\ast})+(1+p)\gamma \ta(x^{\ast})+(1+p)\frac{\gamma^2\ta(x^{\ast})^{2}}{\const_{1}}.
\end{align*}
Rearranging, and using $\const_{1}>1$ as well as \eqref{eq:beta}, gives 
\begin{align*}
p\leq \frac{\psi_{n_{0}}(x^{\ast})+1}{1-\gamma\ta(x^{\ast})-\frac{\gamma^{2}}{\const_{1}}\ta(x^{\ast})^{2}}\leq \frac{\psi_{n_{0}}(x^{\ast})+1}{1-\gamma \ta(x^{\ast}) - \gamma^2 \ta(x^{\ast})^{2}}.
\end{align*}
Since $p>\psi_{n_{0}}(x^{\ast})$ has been chosen arbitrarily, we can let $p\rightarrow\infty$ and obtain a contradiction. Therefore, there exists $\hat{p}>\psi_{n_{0}}(x^{\ast})$ such that $\bar{p}\eqdef\sup_{n\geq n_{0}+1}\psi_{n}(x^{\ast})\leq\hat{p}<\infty$. From here we get for all $n\geq n_{0}+1$
\begin{align*}
\psi_{n}(x^{\ast})&\leq\psi_{n_{0}}(x^{\ast})+\sum_{k=n_{0}}^{n-1}(1+\psi_{k}(x^{\ast}))\frac{\ta(x^{\ast})}{m_{k+1}}+\sum_{k=n_{0}}^{n-1}(1+\psi_{k}(x^{\ast}))\frac{1}{\const_{1}}\left(\frac{\ta(x^{\ast})}{m_{k+1}}\right)^{2}\\
&\leq \psi_{n_{0}}(x^{\ast})+(1+\bar{p})\gamma \ta(x^{\ast})+(1+\bar{p})\frac{\gamma^2 \ta(x^{\ast})^{2}}{\const_{1}}.
\end{align*}
Taking the supremum over $n\geq n_{0}+1$, and shifting back to the original expressions of the involved data, we get 
\[
\bar{p}=\sup_{n\geq n_{0}+1}\Ex[\norm{X_{n}-x^{\ast}}^{2}]\leq \frac{\Ex[\norm{X_{n_{0}}-x^{\ast}}^{2}]+1}{1-\beta(x^{\ast})},
\]
which further leads to \eqref{eq:psi-bound}.
\end{proof}
In case where the local variance of the \ac{SO} is uniformly bounded over the solution set $\scrX_{\ast}$, we obtain much sharper results, allowing us to bound the distance of the iterates away from the solution set. 
\begin{proposition}\label{prop:delta-bound}
Suppose that Assumptions \ref{ass:Consistent}-\ref{ass:variance} hold. Suppose the variance over the solution set $\scrX_{\ast}$ is bounded: $\hat{\sigma}(x^{\ast})\eqdef\max\{\sigma(x^{\ast}),\sigma_{0}\}\leq\hat{\sigma}$ for all $x^\ast \in\scrX_{\ast}$. Define
\begin{equation}\label{eq:a}
\ta\eqdef\bar\alpha^{2}\hat{\sigma}^{2}C_{2}^{2}\const_{1}.
\end{equation}
 Let $\phi\in(0,\frac{\sqrt{5}-1}{2})$ and choose $n_{0}\geq 1$ such that $\sum_{i\geq n_{0}}\frac{1}{m_{i+1}}\leq \frac{\phi}{\ta}$. Then 
\begin{equation}\label{eq:delta-bound}
\sup_{n\geq n_{0}+1}\Ex[\dist(X_{n},\scrX_{\ast})^{2}]\leq \frac{1+\Ex[\dist(X_{n_{0}},\scrX_{\ast})^{2}]}{1-\phi-\phi^{2}}. 
\end{equation}
\end{proposition}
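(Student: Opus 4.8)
The plan is to follow the same telescoping/stopping-time strategy used in the proof of \cref{prop:psi-bound}, but now working with the \emph{distance-based} quasi-Fej\'{e}r inequality from \cref{prop:uniformFejer} rather than the anchor-point inequality from \cref{prop:Fejer}. The key advantage exploited here is that the bounded-variance hypothesis $\hat{\sigma}(x^{\ast})\leq\hat{\sigma}$ for all $x^{\ast}\in\scrX_{\ast}$ lets me replace the position-dependent factor $\hat{\sigma}(\Pi_{\scrX_{\ast}}(X_{n}))^{2}$ appearing in \cref{prop:uniformFejer} by the uniform constant $\hat{\sigma}^{2}$. Concretely, I first take total expectations in the distance recursion of \cref{prop:uniformFejer}; writing $\delta_{n}\eqdef\Ex[\dist(X_{n},\scrX_{\ast})^{2}]$ and dropping the nonpositive residual term $-\tfrac{\rho_{n}}{2}\Ex[r_{\alpha_{n}}(X_{n})^{2}]$, this yields
\begin{equation*}
\delta_{n+1}\leq \delta_{n}+\frac{\kappa_{n}\hat{\sigma}^{2}}{m_{n+1}}\bigl[1+\delta_{n}\bigr]\qquad\forall n\geq 0.
\end{equation*}

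Next I would insert the non-asymptotic bound $\kappa_{n}\leq\bar{\alpha}^{2}C_{2}^{2}\const_{1}\bigl(1+\tfrac{\ta}{\const_{1}m_{n+1}}\bigr)$ coming from \cref{rem:kappa} (with $\sigma_{0}^{2}\bar{\alpha}^{2}C_{2}^{2}\leq\ta$ via the definition \eqref{eq:a}), so that $\kappa_{n}\hat{\sigma}^{2}\leq\ta+\tfrac{\ta^{2}}{\const_{1}m_{n+1}}$. Telescoping from $n_{0}$ gives, for all $n\geq n_{0}+1$,
\begin{equation*}
\delta_{n}\leq\delta_{n_{0}}+\sum_{k=n_{0}}^{n-1}(1+\delta_{k})\frac{\ta}{m_{k+1}}+\sum_{k=n_{0}}^{n-1}(1+\delta_{k})\frac{1}{\const_{1}}\Bigl(\frac{\ta}{m_{k+1}}\Bigr)^{2}.
\end{equation*}
I then introduce the stopping time $\tau_{p}\eqdef\inf\{n\geq n_{0}+1\mid \delta_{n}\geq p\}$ for $p>\delta_{n_{0}}$ and argue by contradiction exactly as before: if $\tau_{p}<\infty$ for every $p$, then bounding each $\delta_{k}$ for $k<\tau_{p}$ by $p$ and using $\sum_{k\geq n_{0}}\tfrac{1}{m_{k+1}}\leq\tfrac{\phi}{\ta}$ (together with $\const_{1}>1$, so $\tfrac{1}{m_{k+1}^{2}}$ is controlled by $\tfrac{1}{m_{k+1}}$ summed) produces
\begin{equation*}
p\leq\delta_{n_{0}}+(1+p)\phi+(1+p)\phi^{2},
\end{equation*}
which rearranges to $p\leq\tfrac{\delta_{n_{0}}+\phi+\phi^{2}}{1-\phi-\phi^{2}}$, a finite bound contradicting $p\to\infty$. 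The constraint $\phi\in(0,\tfrac{\sqrt{5}-1}{2})$ is precisely what guarantees $\phi+\phi^{2}<1$, i.e. the denominator $1-\phi-\phi^{2}$ is strictly positive. Hence $\bar{p}\eqdef\sup_{n\geq n_{0}+1}\delta_{n}$ is finite; feeding this uniform bound back into the telescoped inequality and taking the supremum yields \eqref{eq:delta-bound}.

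The main subtlety, rather than obstacle, is the bookkeeping needed to collapse the two sums into the clean factor $\phi+\phi^{2}$: the first sum contributes $(1+p)\phi$ directly from $\sum\tfrac{1}{m_{k+1}}\leq\tfrac{\phi}{\ta}$, while the second must be bounded by $(1+p)\phi^{2}$, which requires $\tfrac{\ta^{2}}{\const_{1}}\sum\tfrac{1}{m_{k+1}^{2}}\leq\tfrac{\ta^{2}}{\const_{1}}\bigl(\sum\tfrac{1}{m_{k+1}}\bigr)^{2}\leq\tfrac{\phi^{2}}{\const_{1}}\leq\phi^{2}$, using $\const_{1}>1$ and the elementary estimate $\sum a_{k}^{2}\leq(\sum a_{k})^{2}$ for nonnegative terms. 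I would double-check that the definition \eqref{eq:a} of $\ta$ with $\hat{\sigma}$ replacing $\hat{\sigma}(x^{\ast})$ is the correct scaling to absorb $\kappa_{n}\hat{\sigma}^{2}$, but this is exactly parallel to the role of $\ta(x^{\ast})$ in \cref{prop:psi-bound}, so no genuinely new difficulty arises.
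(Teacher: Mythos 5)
Your proposal is correct and follows essentially the same route as the paper: the paper likewise takes expectations in the distance-based quasi-Fej\'{e}r inequality (derived via $\pi_{n}=\Pi_{\scrX_{\ast}}(X_{n})$, i.e.\ the content of \cref{prop:uniformFejer}), bounds $\hat{\sigma}^{2}\kappa_{n}\leq \ta\bigl(1+\tfrac{\ta}{\const_{1}m_{n+1}}\bigr)$ via \cref{rem:kappa}, telescopes from $n_{0}$, and then invokes, mutatis mutandis, the stopping-time/contradiction argument of \cref{prop:psi-bound} — which is exactly what you spell out, including the implicit use of $\sum_{k}a_{k}^{2}\leq\bigl(\sum_{k}a_{k}\bigr)^{2}$ and the fact that $\phi<\tfrac{\sqrt{5}-1}{2}$ makes $1-\phi-\phi^{2}>0$.
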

\begin{proof}
We denote by $d(x)\eqdef \dist(x,\scrX_{\ast}):\mathbb{R}^{d}\to\R_{+}$  the distance function of the solution set $\scrX_{\ast}$. Since $\scrX_{\ast}$ is a nonempty, closed and convex subset of $\mathbb{R}^{d}$, the function $d(X_{n}):\Omega\to\R_{+}$ given by $\omega\mapsto d(X_{n})(\omega)\eqdef\dist(X_{n}(\omega),\scrX_{\ast})$ is $\scrF_{n}$-measurable for all $n \geq 0$. Indeed, letting $\pi_{n}(\omega)\eqdef \Pi_{\scrX_{\ast}}(X_{n}(\omega))$ for all $n \geq 0$, then first, $(\pi_{n})_{n\geq 0}\in\ell^{0}_{+}(\F)$, and second $d(X_{n})(\omega)=\norm{X_{n}(\omega)-\pi_{n}(\omega)}$ is a well-defined random process in $\ell^{0}_{+}(\F)$, being a composition of continuous and measurable functions. Therefore, for all $n \geq 0$,
\begin{align*}
\Ex[d(X_{n+1})^{2}\vert\scrF_{n}]&\leq \Ex[\norm{X_{n+1}-\pi_{n}}^{2}\vert\scrF_{n}]\\
&\leq\norm{X_{n}-\pi_{n}}^{2}-\frac{\rho_{n}}{2}\Ex[r_{\alpha_{n}}(X_{n})^{2}]+\frac{\kappa_{n}}{m_{n+1}}\left(\sigma^{2}_{0}d(X_{n})^{2}+\sigma(\pi_{n})^{2}\right).
\end{align*}
Call $\psi_{n}\eqdef \sqrt{\Ex[d(X_{n})^{2}]}$ for all $n \geq 0$. Taking expectations in the previous display, and using the assumed uniform bound of the variance, we arrive at 
\begin{align*}
\psi^{2}_{n+1}\leq \psi^{2}_{n}-\frac{\rho_{n}}{2}\Ex[r_{\alpha_{n}}(X_{n})^{2}]+\frac{\hat{\sigma}^{2}\kappa_{n}}{m_{n+1}}\left(1+\psi^{2}_{n}\right) \ \forall n \geq 0.
\end{align*}
From Remark \ref{rem:kappa}, we know that 
\begin{align*}
\kappa_{n}\leq \bar{\alpha}^{2}C_{2}^{2}\const_{1}\left(1+\frac{\bar{\alpha}^{2}\hat{\sigma}^{2}C_{2}^{2}}{m_{n+1}}\right),
\end{align*}
so that $\hat{\sigma}^{2}\kappa_{n}\leq \ta(1+\frac{\ta}{m_{n+1}\const_{1}})$ for all $n \geq 0$. Hence, for all $n \geq n_0 +1$
\begin{align*}
\psi^{2}_{n}\leq \psi^{2}_{n_{0}}+\sum_{k=n_{0}}^{n-1}(1+\psi_{k}^{2})\frac{\ta}{m_{k+1}}+\sum_{k=n_{0}}^{n-1}(1+\psi^{2}_{k})\frac{\ta^{2}}{\const_{1}m^{2}_{k+1}}.
\end{align*}
From here proceed, mutatis mutandis, as in the proof of \cref{prop:psi-bound}.
\end{proof}

We next give explicit estimates of the rate of convergence and the oracle complexity of \ac{SFBF}. The reported results are very similar to the extragradient method, with the important remark that all numerical constants can be improved under our forward-backward-forward scheme. For that purpose, it is sufficient to consider Algorithm \ac{SFBF} with a constant step size $\alpha_{n} = \alpha\in \left(0,\frac{1}{\sqrt{2}L} \right )$ for all $n \geq 0$.\footnote{The reason for this is that $\{r_{a}(x);a>0\}$ is a family of equivalent merit functions of $\VI(T,\scrX)$ (see Proposition 10.3.6 in \cite{FacPan03}, and the opening to \cref{sec:converge}). Hence, as long as the step size policy $(\alpha_{n})_{n\geq0}$ obeys \cref{ass:step}, we obtain the same rate estimates.} As in \cite{IusJofOliTho17}, we can provide non-asymptotic convergence rates for the sequence $(\Ex[r_{\alpha}(X_{n})^{2}])_{n \geq 0}$.

For all $n\geq 0, x^{\ast}\in\scrX_{\ast}$ and $\phi \in \left(0,\frac{\sqrt{5}-1}{2}\right)$, define 
\begin{align*}
&\Gamma_{n}\eqdef\sum_{i=0}^{n}\frac{1}{m_{i+1}},\; \Gamma^{2}_{n}\eqdef\sum_{i=0}^{n}\frac{1}{m_{i+1}^{2}},\\
&\rho=1-2{\alpha}^{2}L^{2},\; \delta_{n}(x^{\ast})\eqdef\norm{X_{n}-x^{\ast}}^{2}, \\
&\text{ and } H(x^{\ast},n,\phi)\eqdef \frac{1+\max_{0\leq i\leq n}\Ex[\delta_{i}(x^{\ast})]}{1-\phi-\phi^{2}}.
\end{align*} 
\begin{theorem}\label{th:rate}
Suppose that Assumptions \ref{ass:Consistent}-\ref{ass:variance} hold. Let $x^{\ast}\in\scrX_{\ast}$ be arbitrarily chosen, and consider Algorithm \ac{SFBF} with constant step size $\alpha\in \left(0,\frac{1}{\sqrt{2}L} \right )$. Choose $\phi\in \left (0,\frac{\sqrt{5}-1}{2}\right)$ and $n_{0}\eqdef n_{0}(x^{\ast})$ to be the first integer such that 
\begin{equation}\label{eq:iterate_rate}
\sum_{i\geq n_{0}}\frac{1}{m_{i+1}}\leq \frac{\phi}{\ta(x^{\ast})},
\end{equation}
where $\ta(x^{\ast})$ is defined in \eqref{eq:ax}. Let
\begin{align*}
\Lambda_{n}(x^{\ast},\phi)&\eqdef \frac{2}{\rho}\left\{\Ex[\delta_{0}(x^{\ast})]+\left(1+H(x^{\ast},n_{0},\phi)\right)\left(\ta(x^{\ast})\Gamma_{n}+\ta(x^{\ast})^{2}\Gamma_{n}^{2}\right)\right\},\\
\Lambda_{\infty}(x^{\ast},\phi)&\eqdef \sup_{n\geq 0}\Lambda_{n}(x^{\ast},\phi).
\end{align*}
For all $\eps>0$ define the stopping time 
\begin{equation}\label{eq:Neps}
N_{\eps}\eqdef \inf\{n\geq 0\vert \Ex[r_{\alpha}(X_{n})^{2}]\leq \eps\}. 
\end{equation}
Then, either $N_{\eps}=0$, or 
\begin{equation}
\label{eq:repsilon}
\Ex[r_{\alpha}(X_{N_{\eps}})^{2}]\leq\eps < \frac{\Lambda_{\infty}(x^{\ast},\phi)}{N_{\eps}}.
\end{equation}
\end{theorem}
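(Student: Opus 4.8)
The goal is a convergence-rate statement for the expected residual $\Ex[r_\alpha(X_n)^2]$. The natural starting point is the deterministic-expectation version of the quasi-Fej\'er recursion from \cref{prop:Fejer} with constant step size $\alpha$. Writing $\psi_n \eqdef \Ex[\delta_n(x^\ast)]$ and taking full expectations gives, for all $n\geq 0$,
\begin{equation*}
\psi_{n+1}\leq \psi_n-\frac{\rho}{2}\Ex[r_\alpha(X_n)^2]+\frac{\kappa_n}{m_{n+1}}\bigl[\sigma_0^2\psi_n+\sigma(x^\ast)^2\bigr].
\end{equation*}
First I would isolate the residual term and telescope this inequality from $0$ to $N-1$, so that the accumulated residuals are controlled by $\psi_0=\Ex[\delta_0(x^\ast)]$ plus the summed noise contributions. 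Using the bound on $\kappa_n$ from \cref{rem:kappa}, namely $\kappa_n\leq \bar\alpha^2 C_2^2\const_1(1+\ta(x^\ast)/(\const_1 m_{n+1}))$, and recognizing the definitions $\Gamma_n=\sum_{i=0}^n m_{i+1}^{-1}$ and $\Gamma_n^2=\sum_{i=0}^n m_{i+1}^{-2}$, the noise terms should collapse into expressions of the form $\ta(x^\ast)\Gamma_n+\ta(x^\ast)^2\Gamma_n^2$.

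The key intermediate ingredient is a uniform bound on $\psi_n=\Ex[\delta_n(x^\ast)]$ so that the factor $\sigma_0^2\psi_n+\sigma(x^\ast)^2$ in the noise term does not blow up. This is exactly what \cref{prop:delta-bound} (or the argument behind \cref{prop:psi-bound}) supplies once $n_0$ is chosen so that \eqref{eq:iterate_rate} holds; the quantity $H(x^\ast,n_0,\phi)$ is precisely the uniform bound $(1+\max_{0\leq i\leq n_0}\Ex[\delta_i])/(1-\phi-\phi^2)$ on the tail iterates. Substituting $\max_i\psi_i\leq H(x^\ast,n_0,\phi)$ into the telescoped inequality, the summed noise term is majorized by $(1+H)(\ta\Gamma_n+\ta^2\Gamma_n^2)$, and multiplying through by $2/\rho$ produces exactly $\Lambda_n(x^\ast,\phi)$ as the bound on $\sum_{i=0}^{n-1}\Ex[r_\alpha(X_i)^2]$.

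The final step converts the summed bound into a pointwise rate via a stopping-time argument at $N_\eps$. By definition of $N_\eps$ in \eqref{eq:Neps}, for every index $i<N_\eps$ we have $\Ex[r_\alpha(X_i)^2]>\eps$; summing these $N_\eps$ strict inequalities and comparing with the telescoped bound $\Lambda_{N_\eps}(x^\ast,\phi)\leq\Lambda_\infty(x^\ast,\phi)$ yields $N_\eps\,\eps<\Lambda_\infty(x^\ast,\phi)$, which rearranges to \eqref{eq:repsilon}; the case $N_\eps=0$ is trivial. The main obstacle—and the reason the careful choice of $n_0$ via \eqref{eq:iterate_rate} matters—is ensuring that the feedback loop between $\psi_n$ appearing on both sides of the recursion is closed \emph{before} telescoping: one cannot bound the accumulated noise until the iterate moments are known to be uniformly bounded, yet that bound itself comes from the same recursion. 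Resolving this chicken-and-egg dependence is precisely the role of the earlier propositions, so the substantive content here is the bookkeeping that assembles $\Lambda_n$ correctly and the clean stopping-time inequality at the end.
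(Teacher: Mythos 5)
Your proposal matches the paper's proof essentially step for step: taking full expectations in the quasi-Fej\'er recursion of \cref{prop:Fejer} with constant step size, telescoping with the $\kappa_{n}$ bound from \cref{rem:kappa} to obtain $\sum_{i=0}^{n}\Ex[r_{\alpha}(X_{i})^{2}]\leq\Lambda_{n}(x^{\ast},\phi)$, closing the moment-feedback loop via the uniform bound $\sup_{n\geq 0}\Ex[\delta_{n}(x^{\ast})]\leq H(x^{\ast},n_{0},\phi)$, and finishing with the same stopping-time summation at $N_{\eps}$. One small correction: the uniform moment bound must come from \cref{prop:psi-bound} (fixed $x^{\ast}$, local variance), not \cref{prop:delta-bound} as you cite first, since the latter requires the variance to be uniformly bounded over $\scrX_{\ast}$, an assumption not made in \cref{th:rate}; your parenthetical fallback to \cref{prop:psi-bound} is the correct route.
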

\begin{proof}
Let $\gamma=\frac{\phi}{\ta(x^{\ast})}$, with the constant $\ta(x^{\ast})$ defined in \eqref{eq:ax}, and $n_{0}=n_{0}(x^{\ast})$ as required in the statement of the theorem. From \cref{prop:psi-bound}, we deduce the bound
\begin{align*}
\sup_{n\geq n_{0}+1}\Ex[\delta_{n}(x^{\ast})]\leq \frac{1+\Ex[\delta_{n_{0}}(x^{\ast})]}{1-\phi-\phi^{2}}\leq H(x^{\ast},n_{0},\phi). 
\end{align*}
Since $1-\phi-\phi^{2}\in(0,1)$,
$\sup_{0\leq i\leq n_{0}}\Ex[\delta_{i}(x^{\ast})]\leq H(x^{\ast},n_{0},\phi)$. Therefore, 
\begin{equation}\label{eq:boundH}
\sup_{n \geq 0} \Ex[\delta_{n}(x^{\ast})]\leq H(x^{\ast},n_{0},\phi).
\end{equation}
Taking expectations in equation \eqref{eq:Fejer}, we get 
\begin{align*}
\frac{\rho}{2}\Ex[r_{\alpha}(X_{n})^{2}]\leq \Ex[\delta_{n}(x^{\ast})]-\Ex[\delta_{n+1}(x^{\ast})]+\frac{\kappa_{n}}{m_{n+1}}\left(\sigma(x^{\ast})^{2}+\sigma_{0}^{2}\Ex[\delta_{n}(x^{\ast})]\right) \ \forall n \geq 0.
\end{align*}
Therefore, for all $n \geq 0$,
\begin{align*}
\frac{\rho}{2}\sum_{i=0}^{n}\Ex[r_{\alpha}(X_{i})^{2}]\leq\Ex[\delta_{0}(x^{\ast})]+\sum_{i=0}^{n}\frac{\kappa_{i}}{m_{i+1}}\left(\sigma(x^{\ast})^{2}+\sigma^{2}_{0}\Ex[\delta_{i}(x^{\ast})]\right). 
\end{align*}
Using the variance bound $\hat{\sigma}(x^{\ast})=\max\{\sigma(x^{\ast}),\sigma_{0}\}$, which is well defined given the local boundedness of the variance, we get first from Remark  \ref{rem:kappa} the bound 
\begin{align*}
\kappa_{i}\leq \alpha^{2}C_{2}^{2}\const_{1}\left(1+\frac{\alpha^{2}C_{2}^{2}\hat{\sigma}(x^{\ast})^{2}}{m_{i+1}}\right) \ \forall i \geq 0.
\end{align*}
Second, recalling that $\ta(x^{\ast})=\alpha^{2}\hat{\sigma}(x^{\ast})^{2}C_{2}^{2}\const_{1},$ it yields for all $n\geq0$
\begin{align*}
\frac{\rho}{2}\sum_{i=0}^{n}\Ex[r_{\alpha}(X_{i})^{2}]\leq & \ \Ex[\delta_{0}(x^{\ast})]+\sum_{i=0}^{n}\frac{\ta(x^{\ast})}{m_{i+1}}(1+\Ex[\delta_{i}(x^{\ast})])\\
&+\sum_{i=0}^{n}\frac{1}{\const_{1}}\left(\frac{\ta(x^{\ast})}{m_{i+1}}\right)^{2}(1+\Ex[\delta_{i}(x^{\ast})])\\
\leq & \ \Ex[\delta_{0}(x^{\ast})] + \left(1+\max_{0\leq i\leq n}\Ex[\delta_{i}(x^{\ast})]\right)\left(\ta(x^{\ast})\Gamma_{n}+\ta(x^{\ast})^{2}\Gamma_{n}^{2}\right).
\end{align*}
From \eqref{eq:boundH}, we conclude
\begin{align*}
\frac{\rho}{2}\sum_{i=0}^{n}\Ex[r_{\alpha}(X_{i})^{2}] \leq & \ \Ex[\delta_{0}(x^{\ast})]+(1+H(x^{\ast},n_{0},\phi))\left(\ta(x^{\ast})\Gamma_{n}+\ta(x^{\ast})^{2}\Gamma_{n}^{2}\right)\\
= & \
\frac{\rho}{2}\Lambda_{n}(x^{\ast},\phi) \ \forall n \geq 0.
\end{align*}
In conclusion,
\begin{align*}
\sum_{i=0}^{n}\Ex[r_{\alpha}(X_{i})^{2}]\leq\Lambda_{n}(x^{\ast},\phi)\qquad\forall n\geq 0.
\end{align*}
From \cref{th:converge}, we know that for all $\eps>0$ there exists $M_{\eps}\in\N$ such that $\Ex[r_{\alpha}(X_{n})^{2}]\leq \eps $ for all $n\geq M_{\eps}$. Hence, the (deterministic) stopping time $N_{\eps}$ defined in \eqref{eq:Neps} is either $0$, or an integer bounded from above. Focussing on the latter case $N_{\eps}\geq 1$, then for every $0\leq k\leq N_{\eps}-1$, we have 
\begin{align*}
\eps < \Ex[r_{\alpha}(X_{i})^{2}].
\end{align*}
From here, it follows 
\begin{align*}
\eps N_{\eps} < \sum_{i=0}^{N_{\eps}-1}\Ex[r_{\alpha}(X_{i})^{2}]\leq \Lambda_{N_{\eps-1}}(x^{\ast},\phi).
\end{align*}
Hence,
\begin{align*}
\Ex[r_{\alpha}(X_{N_{\eps}})^{2}]\leq \eps < \frac{\Lambda_{\infty}(x^{\ast},\phi)}{N_{\eps}}. 
\end{align*}
The two cases above can be compactly summarized to statement \eqref{eq:repsilon}.
\end{proof}

We next turn to the case where the local variance is uniformly bounded over the solution set. In the previous theorem, given $x^{\ast}\in\scrX_{\ast}$, the constant $\Lambda_{\infty}(x^{\ast},n_{0}(x^{\ast}),\phi)$ in the convergence rate depends on the variance and on the distance of the $n_{0}(x^{\ast})$ initial iterates to $x^{\ast}$, where $n_{0}(x^{\ast})$ and $\phi$ are chosen such that \eqref{eq:iterate_rate} holds. Assuming a uniformly bound on the variance of \ac{SO} over the solution set $\scrX_{\ast}$, we can obtain much stronger convergence rate estimates, holding uniformly over the solution set. 
\begin{proposition}\label{prop:Rate_UBV}
Assume that $\sup_{x^{\ast}\in\scrX_{\ast}}\hat{\sigma}(x^{\ast})\leq\hat{\sigma}$, where the function $\hat{\sigma}(\cdot)$ is defined in \eqref{eq:sigma}. Let $x^{\ast}\in\scrX_{\ast}$ be arbitrarily chosen, and consider Algorithm \ac{SFBF} with constant step size $\alpha\in \left(0,\frac{1}{\sqrt{2}L} \right )$. Choose $\phi\in \left (0,\frac{\sqrt{5}-1}{2}\right)$ and $n_{0}\eqdef n_{0}(\hat{\sigma})$ to be the first integer such that 
\begin{equation}\label{eq:iterationuniform}
\sum_{i\geq n_{0}}\frac{1}{m_{i+1}}\leq \frac{\phi}{\ta},
\end{equation}
where $\ta=\hat{\sigma}^{2}\alpha^{2}C^{2}_{2}\const_{1}$. Let 
\begin{align*}
&\bar{\Lambda}_{n}(\hat{\sigma}, \phi)\eqdef \frac{2}{\rho}\left\{\Ex[\dist(X_{0},\scrX_{\ast})^2]+(1+\bar H(\hat{\sigma}, n_0, \phi))(\ta\Gamma_{n}+\ta^{2}\Gamma_{n}^{2})\right\},\\
&\bar{\Lambda}_{\infty}(\hat{\sigma}, \phi)=\sup_{n\geq 0}\bar{\Lambda}_{n}(\phi,\hat{\sigma}), \text{and }\\
&\bar H(\hat{\sigma}, n_0, \phi))\eqdef\frac{1+\max_{0\leq i\leq n_{0}(\hat{\sigma})}\Ex[\dist(X_{i},\scrX_{\ast})]}{1-\phi-\phi^{2}}.
\end{align*}
For all $\eps>0$ consider the stopping time defined in \eqref{eq:Neps}. Then, either $N_{\eps}=0$, or 
\begin{equation}\label{eq:rateuniform}
\Ex[r_{\alpha}(X_{N_{\eps}})^{2}]\leq \eps < \frac{\bar{\Lambda}_{\infty}(\hat{\sigma}, \phi)}{N_{\eps}},
\end{equation}
\end{proposition}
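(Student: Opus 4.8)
The plan is to mirror the structure of \cref{th:rate}, replacing the pointwise quantities $\Ex[\delta_n(x^\ast)]$ by their distance-to-solution-set analogues $\Ex[\dist(X_n,\scrX_\ast)^2]$, and invoking the uniform-variance versions of the earlier results throughout. Concretely, I would set $\gamma = \phi/\ta$ with $\ta = \hat\sigma^2\alpha^2 C_2^2 \const_1$, and take $n_0$ as the first integer satisfying \eqref{eq:iterationuniform}. The first step is to apply \cref{prop:delta-bound}, whose hypothesis $\sum_{i\geq n_0}\frac{1}{m_{i+1}}\leq\phi/\ta$ is exactly \eqref{eq:iterationuniform}; this yields
\begin{align*}
\sup_{n\geq n_0+1}\Ex[\dist(X_n,\scrX_\ast)^2]\leq \frac{1+\Ex[\dist(X_{n_0},\scrX_\ast)^2]}{1-\phi-\phi^2}\leq \bar H(\hat\sigma,n_0,\phi),
\end{align*}
and since $1-\phi-\phi^2\in(0,1)$ the bound extends to the initial indices $0\leq i\leq n_0$ as well, so that $\sup_{n\geq 0}\Ex[\dist(X_n,\scrX_\ast)^2]\leq \bar H(\hat\sigma,n_0,\phi)$.

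The second step is to telescope the quasi-Fej\'er inequality in its distance form. I would take conditional expectations in the uniform bound of \cref{prop:uniformFejer} (equivalently, take total expectations in the first display of \cref{prop:delta-bound}'s proof), obtaining for all $n\geq 0$
\begin{align*}
\frac{\rho}{2}\Ex[r_\alpha(X_n)^2]\leq \Ex[\dist(X_n,\scrX_\ast)^2]-\Ex[\dist(X_{n+1},\scrX_\ast)^2]+\frac{\kappa_n\hat\sigma^2}{m_{n+1}}\left(1+\Ex[\dist(X_n,\scrX_\ast)^2]\right).
\end{align*}
Summing over $i=0,\dots,n$, the distance terms telescope to $\Ex[\dist(X_0,\scrX_\ast)^2]$, and using the Remark \ref{rem:kappa} bound $\hat\sigma^2\kappa_i\leq \ta(1+\ta/(\const_1 m_{i+1}))$ together with the uniform bound from the first step gives
\begin{align*}
\frac{\rho}{2}\sum_{i=0}^{n}\Ex[r_\alpha(X_i)^2]\leq \Ex[\dist(X_0,\scrX_\ast)^2]+(1+\bar H(\hat\sigma,n_0,\phi))\left(\ta\Gamma_n+\ta^2\Gamma_n^2\right)=\frac{\rho}{2}\bar\Lambda_n(\hat\sigma,\phi),
\end{align*}
so $\sum_{i=0}^n\Ex[r_\alpha(X_i)^2]\leq \bar\Lambda_n(\hat\sigma,\phi)\leq\bar\Lambda_\infty(\hat\sigma,\phi)$ for all $n$.

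The final step is the stopping-time argument, identical to that in \cref{th:rate}. By \cref{th:converge}, $\Ex[r_\alpha(X_n)^2]\to 0$, so $N_\eps$ is either $0$ or a finite integer; in the latter case $\eps<\Ex[r_\alpha(X_i)^2]$ for every $0\leq i\leq N_\eps-1$, whence $\eps N_\eps<\sum_{i=0}^{N_\eps-1}\Ex[r_\alpha(X_i)^2]\leq\bar\Lambda_\infty(\hat\sigma,\phi)$, giving \eqref{eq:rateuniform}. I expect no serious obstacle here: the only point requiring care is keeping the convergence rate uniform over $\scrX_\ast$, which is precisely what the uniform variance hypothesis $\sup_{x^\ast\in\scrX_\ast}\hat\sigma(x^\ast)\leq\hat\sigma$ buys us, since it makes $\ta$, $n_0$, and hence $\bar\Lambda_\infty$ independent of the particular reference solution. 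The main subtlety is ensuring that the finiteness of $\Gamma_\infty$ and $\Gamma_\infty^2$ (guaranteed by \cref{ass:batch}) is what makes $\bar\Lambda_\infty$ finite, so that the $O(1/N_\eps)$ rate is genuinely meaningful.
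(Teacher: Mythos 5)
Your proposal is correct and follows essentially the same route as the paper's own proof: apply \cref{prop:delta-bound} under hypothesis \eqref{eq:iterationuniform} to get the uniform bound $\sup_{n\geq 0}\Ex[\dist(X_n,\scrX_\ast)^2]\leq \bar H(\hat\sigma,n_0,\phi)$, telescope the distance-form quasi-Fej\'er inequality from \cref{prop:uniformFejer} with the Remark~\ref{rem:kappa} bound on $\kappa_n$, and finish with the stopping-time argument of \cref{th:rate}. Your parenthetical care in invoking the first display of \cref{prop:uniformFejer} (with $\hat\sigma(\Pi_{\scrX_\ast}(X_n))\leq\hat\sigma$) rather than the global-\eqref{eq:UBV} version is exactly right, since the hypothesis here only bounds the variance over $\scrX_\ast$.
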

\begin{proof}
The proof is almost identical to the proof of \cref{th:rate}, but now we will use the estimates from \cref{prop:uniformFejer} and \cref{prop:delta-bound} . We first remark that the upper variance bound $\hat{\sigma}$ is the only parameter in this statement; hence, the threshold index $n_{0}=n_{0}(\hat{\sigma})$ depends on this parameter only. Once we made this choice, we can repeat all the steps involved in the proof of \cref{th:rate} verbatim, but by using \cref{prop:uniformFejer} instead of \cref{prop:Fejer}, to conclude that
\begin{align*}
\sum_{i=0}^{n}\Ex[r_{\alpha}(X_{i})^{2}]\leq & \ \Ex[\dist(X_{0},\scrX_{\ast})^{2}]+\ta\sum_{i=0}^{n}\frac{1+\Ex[\dist(X_{i},\scrX_{\ast})^{2}]}{m_{i+1}}\\
&+\ta^{2}\sum_{i=0}^{n}\frac{1+\Ex[\dist(X_{i},\scrX_{\ast})^{2}]}{m_{i+1}^{2}} \ \forall n \geq 0.
\end{align*}
\cref{prop:delta-bound} gives us 
\begin{align*}
\sup_{n\geq n_{0}+1}\Ex[\dist(X_{n},\scrX_{\ast})^{2}]\leq \frac{1+\Ex[\delta(X_{n_{0}},\scrX_{\ast})^{2}]}{1-\phi-\phi^{2}}\leq \bar H(\hat{\sigma}, n_0, \phi),
\end{align*}
from which it follows 
\begin{align*}
\sup_{n\geq 0}\Ex[\dist(X_{n},\scrX_{\ast})^{2}]\leq \bar H(\hat{\sigma}, n_0, \phi). 
\end{align*}
From here, we conclude just as in the proof of \cref{th:rate} that 
\begin{align*}
\sum_{i=0}^{n}\Ex[r_{\alpha}(X_{i})^{2}]&\leq\bar{\Lambda}_{n}(\hat{\sigma}, \phi)\leq\bar{\Lambda}_{\infty}(\hat{\sigma}, \phi) \ \forall n \geq 0.  
\end{align*}
Choose $\eps>0$ arbitrary, and consider the stopping time \eqref{eq:Neps}. Then, either $N_{\eps}=0$, or else $N_{\eps}\geq 1$. Focussing on the latter case, we argue just as in the proof of \cref{th:rate}, that 
\begin{align*}
\eps N_{\eps} < \sum_{i=0}^{N_{\eps}-1}\Ex[r_{\alpha}(X_{i})^{2}]\leq\Lambda_{N_{\eps}-1}(x^{\ast},\phi,\hat{\sigma}).
\end{align*}
Hence, if $N_\eps$ not zero, we must have
\begin{align*}
\Ex[r_{\alpha}(X_{N_{\eps}})^{2}]\leq\eps < \frac{\Lambda_{\infty}(\hat{\sigma}, \phi)}{N_{\eps}}.
\end{align*}

\end{proof}
We now turn to the estimate of the oracle complexity. By this we mean the overall size of the data set needed to be processed in order to make the natural residual function smaller than a given tolerance level $\eps >0$, in mean square. Hence, using the stopping time \eqref{eq:Neps}, we would like to estimate the number $\sum_{i=0}^{N_{\eps}}2m_{i+1}$. 

For simplicity, we will assume that the local variance function $\sigma(x^{\ast})$ is uniformly bounded over the solution set $\scrX_{\ast}$. That is, we assume that there exists $\hat{\sigma}\in(0,\infty)$ such that $\sup_{x\in\scrX_{\ast}}\hat{\sigma}(x)\leq\hat{\sigma}$. A more complete argument, without making this strong assumption can be given similar to Proposition 3.23 in \cite{IusJofOliTho17}.  We refrain doing so, since our main aim in this paper is to illustrate the improvement in the convergence rate when using Algorithm \ac{SFBF} instead of \ac{SEG}, and the simplest setting is enough for this purpose. We organize the derivation of an oracle complexity estimate in two parts. First, we will show that a specific (though admissible) choice of the sample rate, allows us to give an explicit bound on the number of preliminary iterates $n_{0} \eqdef n_0(\hat \sigma)$ needed to apply the general bounds reported in \cref{prop:Rate_UBV}. Building on this insight, we directly estimate the oracle complexity. 

As announced, we first establish a bound on the number of iterations we need to meet condition \eqref{eq:iterationuniform}.
\begin{lemma}
Let $\ta$ be the constant defined in \eqref{eq:a}, and $\phi\in(0,\frac{\sqrt{5}-1}{2})$. We choose the sample rate 
\begin{equation}\label{eq:batchuniform}
m_{i}=\lceil \theta(\mu-1+i)\ln(\mu+i-1)^{1+b}\rceil,
\end{equation}
for $i\geq 1,\theta>0,\mu>1$ and $b>0$. Then, if $n_{0}$ is an integer satisfying 
\begin{align*}
n_{0}\geq 1-\mu+e^{\left(\frac{\ta}{\phi\theta b}\right)^{1/b}},
\end{align*}
we have $\sum_{i\geq n_{0}}\frac{1}{m_{i+1}}\leq \frac{\phi}{\ta}$.
\end{lemma}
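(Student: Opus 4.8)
The plan is to bound the tail sum by a convergent improper integral with a clean closed form, and then invert the resulting inequality to recover the stated threshold on $n_{0}$.

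First I would discard the ceiling: since $\lceil t\rceil\geq t$, the prescribed sample rate obeys $m_{i+1}\geq\theta(\mu+i)\ln(\mu+i)^{1+b}$ for every $i\geq 1$, whence
\[
\sum_{i\geq n_{0}}\frac{1}{m_{i+1}}\leq\frac{1}{\theta}\sum_{i\geq n_{0}}f(i),\qquad f(x)\eqdef\frac{1}{(\mu+x)\ln(\mu+x)^{1+b}}.
\]
Next I would record that $f$ is positive and strictly decreasing on $[n_{0}-1,\infty)$: because $\mu>1$ and $n_{0}\geq 1$ we have $\mu+x>1$ there, so $\ln(\mu+x)>0$, and differentiating the denominator gives the positive quantity $\ln(\mu+x)^{b}\left[\ln(\mu+x)+1+b\right]$. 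Monotonicity yields the elementary comparison $f(i)\leq\int_{i-1}^{i}f(x)\,\dif x$, and summing over $i\geq n_{0}$ gives $\sum_{i\geq n_{0}}f(i)\leq\int_{n_{0}-1}^{\infty}f(x)\,\dif x$.

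The core computation is the substitution $u=\ln(\mu+x)$, $\dif u=\dif x/(\mu+x)$, under which
\[
\int_{n_{0}-1}^{\infty}f(x)\,\dif x=\int_{\ln(\mu+n_{0}-1)}^{\infty}u^{-(1+b)}\,\dif u=\frac{1}{b\,\ln(\mu+n_{0}-1)^{b}}.
\]
Here both the convergence of the improper integral and this closed form rely essentially on $b>0$. Combining the three displays gives $\sum_{i\geq n_{0}}\frac{1}{m_{i+1}}\leq\left(\theta b\,\ln(\mu+n_{0}-1)^{b}\right)^{-1}$.

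Finally I would impose the target bound: requiring $\left(\theta b\,\ln(\mu+n_{0}-1)^{b}\right)^{-1}\leq\phi/\ta$ is, since $\ln(\mu+n_{0}-1)>0$ and $b>0$, equivalent to $\ln(\mu+n_{0}-1)\geq(\ta/(\phi\theta b))^{1/b}$, i.e.\ to $\mu+n_{0}-1\geq\exp\!\big((\ta/(\phi\theta b))^{1/b}\big)$, which rearranges to precisely the hypothesis $n_{0}\geq 1-\mu+e^{(\ta/(\phi\theta b))^{1/b}}$. The computation is routine; the only points demanding care are the monotonicity of $f$ that licenses the integral comparison (needing $\mu>1$ and $n_{0}\geq 1$ to keep the logarithm positive across the whole tail) and the role of $b>0$, without which the tail sum would diverge and no finite $n_{0}$ could work.
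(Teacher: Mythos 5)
Your proof is correct and takes essentially the same approach as the paper's: discard the ceiling, bound the tail sum by the integral $\frac{1}{\theta}\int_{n_{0}-1}^{\infty}\frac{\dif t}{(t+\mu)\ln(t+\mu)^{1+b}}=\frac{1}{\theta b\ln(n_{0}-1+\mu)^{b}}$, and solve the resulting inequality for $n_{0}$. You merely supply details the paper leaves implicit, namely the monotonicity of the integrand that licenses the sum--integral comparison and the explicit algebraic inversion producing the stated threshold.
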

\begin{proof}
For $n_{0}\geq 1$, we compute 
\begin{align*}
\sum_{i\geq n_{0}}\frac{1}{m_{i+1}}&\leq \frac{1}{\theta}\sum_{i\geq n_{0}}\frac{1}{(i+\mu)\ln(i+\mu)^{1+b}}\\
&\leq \frac{1}{\theta}\int_{n_{0}-1}^{\infty}\frac{1}{(t+\mu)\ln(t+\mu)^{1+b}}\dif t\\
&=\frac{1}{\theta b\ln(n_{0}-1+\mu)^{b}}. 
\end{align*}
Therefore, if $\frac{1}{\theta b\ln(n_{0}-1+\mu)^{b}}\leq\frac{\phi}{\ta}$, we obtain the desired bound. Solving the latter inequality for $n_{0}$ gives the claimed result. 
\end{proof}

Using the sample rate \eqref{eq:batchuniform}, we will now bound the constant $\bar{\Lambda}(\hat{\sigma}, \phi)$, and the stopping time $N_{\eps}$. Define the constants 
\begin{align*}
\scrA_{\mu,b}\eqdef \frac{\alpha^{2}C^{2}_{2}\const_{1}}{b\ln(\mu-1)^{b}},\; \scrB_{\mu,b}\eqdef \frac{\alpha^{4}C_{2}^{4}\const_{1}^{2}}{(1+2b)(\mu-1)\ln(\mu-1)^{1+2b}}. 
\end{align*}
Since, 
\[
\Gamma_{\infty}\leq \frac{1}{\theta b}\frac{1}{\ln(\mu-1)^{b}},\text{ and }\Gamma^{2}_{\infty}\leq\frac{1}{\theta^{2}}\frac{1}{(2b+1)(\mu-1)\ln(\mu-1)^{1+2b}},
\]
 we conclude 
\begin{align*}
\ta\Gamma_{\infty}+\ta^{2}\Gamma^{2}_{\infty}&\leq\max\{1,\theta^{-2}\}(\scrA_{\mu,b}\hat{\sigma}^{2}+\scrB_{\mu,b}\hat{\sigma}^{4}).
\end{align*}
Therefore, 
\begin{align*}
\bar{\Lambda}(\hat{\sigma},\phi)&\leq \max\{1,\theta^{-2}\}\left\{ \frac{2}{\rho}\Ex[\dist(X_{0},\scrX_{\ast})^{2}]+\frac{2}{\rho}(1+\bar H(\hat{\sigma}, n_0, \phi))\left[\scrA_{\mu,b}\hat{\sigma}^{2}+\scrB_{\mu,b}\hat{\sigma}^{4}\right]\right\}\\
&\eqdef \max\{1,\theta^{-2}\}\scrQ(\phi,\hat{\sigma}).
\end{align*}
This yields the following refined uniform bound on the squared residual function.
\begin{corollary} 
For all $\eps>0$, the stopping time $N_{\eps}$ defined in \eqref{eq:Neps} is either zero, or
\begin{align*}
\Ex[r_{\alpha}(X_{N_{\eps}})^{2}]\leq\eps < \frac{\max\{1,\theta^{-2}\}\scrQ(\phi,\hat{\sigma})}{N_{\eps}}.
\end{align*}
\end{corollary}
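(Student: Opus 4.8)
The plan is to obtain this corollary as an immediate specialization of \cref{prop:Rate_UBV}, once the abstract constant $\bar{\Lambda}_{\infty}(\hat{\sigma},\phi)$ is replaced by the explicit quantity built from the sample rate \eqref{eq:batchuniform}. First I would invoke \cref{prop:Rate_UBV}, whose hypotheses are exactly those in force here (the uniform variance bound $\sup_{x^{\ast}\in\scrX_{\ast}}\hat{\sigma}(x^{\ast})\leq\hat{\sigma}$, a constant step size $\alpha\in(0,1/(\sqrt{2}L))$, and $\phi\in(0,(\sqrt{5}-1)/2)$ together with $n_{0}$ chosen so that \eqref{eq:iterationuniform} holds). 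That proposition already delivers the dichotomy \eqref{eq:rateuniform}: either $N_{\eps}=0$, or $\Ex[r_{\alpha}(X_{N_{\eps}})^{2}]\leq\eps<\bar{\Lambda}_{\infty}(\hat{\sigma},\phi)/N_{\eps}$.

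The only remaining task is then to bound $\bar{\Lambda}_{\infty}(\hat{\sigma},\phi)$ from above by $\max\{1,\theta^{-2}\}\scrQ(\phi,\hat{\sigma})$. Since each $\Gamma_{n}$ and $\Gamma^{2}_{n}$ is a partial sum of a nonnegative series, both are nondecreasing in $n$, so $\bar{\Lambda}_{n}(\hat{\sigma},\phi)$ is nondecreasing and $\bar{\Lambda}_{\infty}(\hat{\sigma},\phi)$ is recovered by letting $\Gamma_{n}\uparrow\Gamma_{\infty}$ and $\Gamma^{2}_{n}\uparrow\Gamma^{2}_{\infty}$ in the defining expression of $\bar{\Lambda}_{n}$. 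For the rate \eqref{eq:batchuniform} the integral comparison carried out just above the corollary yields $\Gamma_{\infty}\leq(\theta b)^{-1}\ln(\mu-1)^{-b}$ and $\Gamma^{2}_{\infty}\leq\theta^{-2}((2b+1)(\mu-1))^{-1}\ln(\mu-1)^{-(1+2b)}$; substituting $\ta=\hat{\sigma}^{2}\alpha^{2}C_{2}^{2}\const_{1}$ and recalling the definitions of $\scrA_{\mu,b}$ and $\scrB_{\mu,b}$ gives $\ta\Gamma_{\infty}+\ta^{2}\Gamma^{2}_{\infty}\leq\max\{1,\theta^{-2}\}(\scrA_{\mu,b}\hat{\sigma}^{2}+\scrB_{\mu,b}\hat{\sigma}^{4})$, whence $\bar{\Lambda}_{\infty}(\hat{\sigma},\phi)\leq\max\{1,\theta^{-2}\}\scrQ(\phi,\hat{\sigma})$ by the definition of $\scrQ$.

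Combining the two facts completes the argument: in the nontrivial case $N_{\eps}\geq1$, \eqref{eq:rateuniform} together with the monotone bound on $\bar{\Lambda}_{\infty}$ gives $\Ex[r_{\alpha}(X_{N_{\eps}})^{2}]\leq\eps<\max\{1,\theta^{-2}\}\scrQ(\phi,\hat{\sigma})/N_{\eps}$, while the case $N_{\eps}=0$ is recorded separately, matching the two alternatives in the statement. I do not anticipate any genuine obstacle here, since the content is purely a substitution; the only place requiring care is keeping the factors $\scrA_{\mu,b},\scrB_{\mu,b}$ and the prefactor $\max\{1,\theta^{-2}\}$ consistent with the integral estimates for $\Gamma_{\infty}$ and $\Gamma^{2}_{\infty}$, which are already established in the displayed computation immediately preceding the corollary.
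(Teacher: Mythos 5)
Your proposal is correct and follows essentially the same route as the paper: the corollary there is obtained precisely by combining the dichotomy of \cref{prop:Rate_UBV} with the displayed bound $\bar{\Lambda}(\hat{\sigma},\phi)\leq\max\{1,\theta^{-2}\}\scrQ(\phi,\hat{\sigma})$ derived from the integral estimates for $\Gamma_{\infty}$ and $\Gamma^{2}_{\infty}$ under the sample rate \eqref{eq:batchuniform}. Your explicit remark that $\Gamma_{n}$ and $\Gamma^{2}_{n}$ increase to their limits, so that the bound indeed applies to $\bar{\Lambda}_{\infty}(\hat{\sigma},\phi)$, is a small but welcome clarification of a step the paper leaves implicit.
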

We now turn to the estimation of the oracle complexity.  To this end, we have to bound the total number of data points involved in the $N_{\eps}$ batches needed to execute Algorithm \ac{SFBF}, i.e. we want to upper bound the sum $2\sum_{i=0}^{N_{\eps}}m_{i}$. Given the definition of the sample rate in \eqref{eq:batchuniform}, we can perform the following computation: 
\begin{align*}
\sum_{i=1}^{N_{\eps}+1}m_{i}&\leq \max\{1,\theta\}\left[\ln(N_{\eps}+\mu+1)^{1+b}\sum_{i=1}^{N_{\eps}+1}(i-1+\mu)+(N_{\eps}+1)\right] \\
&\leq \max\{1,\theta\}\left[\ln(N_{\eps}+1+\mu)^{1+b}\frac{(N_{\eps}+1)}{2}(N_{\eps}+2\mu)+(N_{\eps}+1)\right].
\end{align*}
Hence, 
\begin{equation}\label{eq:mIntermediate}
2\sum_{i=1}^{N_{\eps}}m_{i}\leq\max\{1,\theta\}(N_{\eps}+1)(N_{\eps}+2\mu)\left[\ln(N_{\eps}+1+\mu)^{1+b}+\frac{2}{N_{\eps}+2\mu}\right].
\end{equation}
\begin{proposition}
Let $\eps\in(0,1)$ be arbitrarily chosen, and $\mu\in(1,1/\eps)$. Define 
\begin{align*}
\scrI(\hat{\sigma}, \phi)\eqdef & 3\left(\frac{2}{\rho}\Ex[\dist(X_{0},\scrX_{\ast})^{2}]+2\right)^2\\
&+\frac{12}{\rho^{2}}(1+\bar H(\hat{\sigma}, n_0, \phi))^{2}\scrA_{\mu,b}^{2}\hat{\sigma}^{4}+\frac{12}{\rho^{2}}(1+\bar H(\hat{\sigma}, n_0, \phi))^{2}\scrB_{\mu,b}^{2}\hat{\sigma}^{8},\\
\scrJ(\hat{\sigma}, \phi)\eqdef & \bar{\Lambda}_{\infty}(\hat{\sigma}, n_0, \phi)+2.
\end{align*}
If the sample rate $(m_{i})_{i\geq 1}$ is given by \eqref{eq:batchuniform}, then we can bound the oracle complexity by 
\[
2\sum_{i=1}^{N_{\eps}+1}m_{i}\leq \frac{2\max\{1,\theta\}\max\{1,\theta^{-4}\}\scrI(\hat{\sigma}, \phi)\left(\ln(\scrJ(\hat{\sigma}, \phi)/\eps)^{1+b}+\mu^{-1}\right)}{\eps^{2}}.
\]
\end{proposition}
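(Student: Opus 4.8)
The plan is to start from the intermediate estimate \eqref{eq:mIntermediate} and to eliminate the a priori unknown stopping time $N_{\eps}$ by feeding in the rate bound of \cref{prop:Rate_UBV}. I focus on $N_{\eps}\geq 1$, the case $N_{\eps}=0$ being trivial. Then \eqref{eq:rateuniform} gives $\eps<\bar{\Lambda}_{\infty}(\hat{\sigma},\phi)/N_{\eps}$, whence
\[
N_{\eps}<\frac{\bar{\Lambda}_{\infty}(\hat{\sigma},\phi)}{\eps}=\frac{\scrJ(\hat{\sigma},\phi)-2}{\eps},
\]
using $\scrJ(\hat{\sigma},\phi)=\bar{\Lambda}_{\infty}(\hat{\sigma},\phi)+2$. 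This is the only place the convergence analysis enters: it converts the residual threshold into an explicit $O(1/\eps)$ bound on the number of iterations.

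Next I would control each of the three factors in \eqref{eq:mIntermediate} by the common quantity $\scrJ(\hat{\sigma},\phi)/\eps$, exploiting the standing hypothesis $\mu\in(1,1/\eps)$ together with $\eps<1$. Since $N_{\eps}<(\scrJ-2)/\eps$ and $1<1/\eps$, one gets $N_{\eps}+1<(\scrJ-1)/\eps<\scrJ/\eps$; since moreover $2\mu<2/\eps$, one gets $N_{\eps}+2\mu<\scrJ/\eps$; and because $\mu>1$ forces $1+\mu\leq 2\mu$, also $N_{\eps}+1+\mu\leq N_{\eps}+2\mu<\scrJ/\eps$. Finally $N_{\eps}+2\mu\geq 2\mu$ yields $2/(N_{\eps}+2\mu)\leq\mu^{-1}$. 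Substituting these four bounds into \eqref{eq:mIntermediate} and using monotonicity of $t\mapsto(\ln t)^{1+b}$ gives
\[
2\sum_{i=1}^{N_{\eps}+1}m_{i}<\max\{1,\theta\}\,\frac{\scrJ(\hat{\sigma},\phi)^{2}}{\eps^{2}}\left[\ln(\scrJ(\hat{\sigma},\phi)/\eps)^{1+b}+\mu^{-1}\right].
\]

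It remains to repackage $\scrJ^{2}$ into the constant $\scrI(\hat{\sigma},\phi)$. Recalling $\bar{\Lambda}_{\infty}(\hat{\sigma},\phi)\leq\max\{1,\theta^{-2}\}\scrQ(\phi,\hat{\sigma})$, I would split into the regimes $\theta\geq 1$ and $\theta<1$. In the first, $\scrJ\leq\scrQ(\phi,\hat{\sigma})+2$, which is a sum of the three nonnegative terms $\tfrac{2}{\rho}\Ex[\dist(X_{0},\scrX_{\ast})^{2}]+2$, $\tfrac{2}{\rho}(1+\bar{H}(\hat{\sigma},n_{0},\phi))\scrA_{\mu,b}\hat{\sigma}^{2}$ and $\tfrac{2}{\rho}(1+\bar{H}(\hat{\sigma},n_{0},\phi))\scrB_{\mu,b}\hat{\sigma}^{4}$; the elementary inequality $(x+y+z)^{2}\leq 3(x^{2}+y^{2}+z^{2})$ then reproduces exactly $\scrJ^{2}\leq\scrI(\hat{\sigma},\phi)$. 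In the second regime $\theta^{-2}\geq 1$, so $\scrJ\leq\theta^{-2}(\scrQ(\phi,\hat{\sigma})+2)$ and hence $\scrJ^{2}\leq\theta^{-4}\scrI(\hat{\sigma},\phi)$; combining the two cases gives $\scrJ^{2}\leq\max\{1,\theta^{-4}\}\scrI(\hat{\sigma},\phi)$. Inserting this into the display above—the residual factor $2$ in the statement absorbing the slack between the strict and non-strict inequalities—yields the asserted bound. The argument is essentially bookkeeping; the only genuinely delicate points are verifying that $\mu<1/\eps$ lets one bound the distinct factors $N_{\eps}+1$, $N_{\eps}+2\mu$ and $N_{\eps}+1+\mu$ by the \emph{same} quantity $\scrJ/\eps$, and checking that the threefold grouping of $\scrQ(\phi,\hat{\sigma})+2$ reproduces the defining constants of $\scrI(\hat{\sigma},\phi)$ verbatim.
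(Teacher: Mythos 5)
Your proposal is correct and follows essentially the same route as the paper's own proof: both start from \eqref{eq:mIntermediate}, use $N_{\eps}<\bar{\Lambda}_{\infty}(\hat{\sigma},\phi)/\eps$ together with $\eps<1$ and $\mu<1/\eps$ to bound every factor by $\scrJ(\hat{\sigma},\phi)/\eps$ (the paper passes through $(\bar{\Lambda}_{\infty}+1)(\bar{\Lambda}_{\infty}+2)\leq\scrJ^{2}$, which is the same step), and then apply $(x+y+z)^{2}\leq 3(x^{2}+y^{2}+z^{2})$ to obtain $\scrJ^{2}\leq\max\{1,\theta^{-4}\}\scrI(\hat{\sigma},\phi)$. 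Your observation that the derived bound is actually a factor of $2$ stronger than the stated one is also consistent with the paper, whose proof likewise ends with this slack.
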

\begin{proof}
The proof is patterned after \cite{IusJofOliTho17}. Using $N_{\eps} < \bar{\Lambda}_{\infty}(\phi,\hat{\sigma})/\eps$, we continue from \eqref{eq:mIntermediate}  to obtain the bound 
\begin{align*}
2\sum_{i=1}^{N_{\eps}+1}m_{i}&\leq\max\{1,\theta\}\frac{(\bar{\Lambda}_{\infty}(\hat{\sigma, \phi})+1)(\bar{\Lambda}_{\infty}(\hat{\sigma}, \phi)+2)}{\eps^{2}}\left[\ln\left(\frac{\bar{\Lambda}_{\infty}(\hat{\sigma}, \phi)+2}{\eps}\right)^{1+b}+\mu^{-1}\right]\\
&\leq \max\{1,\theta\}\frac{(\bar{\Lambda}_{\infty}(\hat{\sigma}, \phi)+2)^{2}}{\eps^{2}}\left[\ln(\eps^{-1}\scrJ(\hat{\sigma}, \phi))^{1+b}+\mu^{-1}\right].
\end{align*}
Since, 
\begin{align*}
& (\Lambda_{\infty}(\hat{\sigma}, \phi)+2)^{2}\\
\leq & \max\{1,\theta^{-4}\}\left\{ \frac{2}{\rho}\Ex[\dist(X_{0},\scrX_{\ast})^{2}]+\frac{2}{\rho}(1+\bar H(\hat{\sigma}, n_0, \phi))\left[\scrA_{\mu,b}\hat{\sigma}^{2}+\scrB_{\mu,b}\hat{\sigma}^{4}\right]+2\right\}^{2}\\
\leq & \max\{1,\theta^{-4}\}3\left(\frac{2}{\rho}\Ex[\dist(X_{0},\scrX_{\ast})^{2}]+2\right)^{2}+\frac{12}{\rho^{2}}(1+\bar H(\hat{\sigma}, n_0, \phi))^{2}\scrA_{\mu,b}^{2}\hat{\sigma}^{4}\\
&+\frac{12}{\rho^{2}}\max\{1,\theta^{-4}\}(1+\bar H(\hat{\sigma}, n_0, \phi))^{2}\scrB_{\mu,b}^{2}\hat{\sigma}^{8}\\
=& \max\{1,\theta^{-4}\}\scrI(\hat{\sigma}, \phi),
\end{align*}
the result follows.
\end{proof}

\section{Computational Experiments}
\label{sec:numerics}
We provide four examples to verify our theoretical results and compare our methods with the \ac{SEG} proposed in \cite{IusJofOliTho17}. All experiments, beside \cref{exp:EE}, were generated with Matlab R2017a on a Linux OS with a 2.39 Ghz processor and 16 GB of memory. \cref{exp:EE} was generated with Mathematica 11 on a MacBook Pro with a 2.9 Ghz processor and 16 GB memory.

\subsection{Fractional programming and applications to communication networks}
\label{sec:fracprog}

Due to its widespread use and applications, fractional programming is instrumental to operations research and engineering, ranging from network science to signal processing, wireless communications and many other related fields \cite{SheWei2018a}. The standard form of a stochastic fractional program is as follows:
\begin{equation}
\label{eq:frac}
\begin{aligned}
\textrm{minimize}
	&\quad
	f(x)
		= \Ex\bigg[ \frac{G(x;\xi)}{h(x;\xi)} \bigg],
	\\
\textrm{subject to}
	&\quad
	x\in\scrX
\end{aligned}
\end{equation}
where $G$ and $h$ are positive and convex in $x$ for all $\xi$. It is well known that such problems are pseudo-convex \cite{BV04}, so they fall within the general framework of this paper. In particular, one of the cases most commonly encountered in practice is when $h$ is linear in $x$ and deterministic, i.e.,
\[
h(x;\xi)
	\eqdef h(x)
	= a^{\top} x + b
\]
for vectors $a$ and $b$ of suitable dimension. Solving this problem directly involves the pseudo-monotone operator $T(x)=\nabla f(x)$. Indeed, $x^{\ast}\in\scrX$ solves problem \eqref{eq:frac} if and only if $x^{\ast}$ solves $\VI(T,\scrX)$. 
\begin{experiment}[Quadratic fractional programming]
\label{exp:quadratic}

In our first experiment, we consider functions $G$ of the form
\[
G(x,\xi)
	= \frac{1}{2}x^{\top}Q(\xi)x
	+ c(\xi)^{\top}x
	+ q(\xi), 
\]
where $Q(\xi)\in\R^{d\times d}$, $c(\xi)\in\R^d$ and $q(\xi)\in\R$ are randomly generated, and $Q$ is further assumed to be positive semi-definite.
More specifically, the problem data for $Q$ is randomly generated as follows:
\[
Q= M^{\top}M + \Id,
\]
where $M$ is a random matrix of size $d\times d$ and $\Id$ is the $d\times d$ identity matrix. Finally, the vectors $a$ and $c$ are drawn uniformly at random from $(0,2)^{d}$, $q$ is a random number in $(1,2)$, and $b=1+4d$.

At each sample of the methods, we generate a sample matrix as
\[
Q(\xi)
	= Q + \frac{1}{2}\left( V(\xi)+V(\xi)^{T} \right),
\]
where $V(\xi)$ is a $d\times d$ random matrix with iid entries drawn from a normal distribution with zero mean and standard derivation $\sigma=0.1$. Similarly,
\begin{equation}
c(\xi):=c + c_1(\xi), \quad q(\xi) = q+q_1(\xi),
\end{equation}
where $c_1(\xi)$ and $q(\xi)$ are a random vector and a random number with zero mean and normal distribution with derivation $\sigma=0.1$, respectively. Also, for the problem's feasible region, we consider box constraints of the form
\begin{equation}
\scrX	= \{x \in \mathbb{R}^d: a_{i} \leq x_i \leq b_{i} \quad i=1, ..., d\},
\end{equation}
where the lower bound $a_{i}$ is a random vector in $(0,1)^d$ and the upper bound $b_{i}=a_{i}+10$. We have implemented \ac{SEG} and \ac{SFBF} for this problem, using the random operator $F(x,\xi)=\nabla_{x}\left(\frac{G(x,\xi)}{h(x)}\right)$. The starting point $x_0$ is randomly chosen in $(1,10)^d$. Both algorithms are run with a constant step-size policy. We fix the stepsize of \ac{SFBF} and \ac{SEG} as $\alpha_{FBF} = 10/d$ and $\alpha_{EG} = \alpha_{FBF}/\sqrt{3}$. The step-size $\alpha_{EG}$ is the largest one compatible with the theory developed in \cite{IusJofOliTho17}. We choose the batch size sequence $m_{n+1}=\left[ \frac{(n+1)^{1.5}}{d}\right]$, so that Assumption 6 is satisfied. We stop the algorithms when the residual is below a given tolerance $\eps$. Specifically, our stopping criterion is 
\[
r_n\eqdef\norm{ x_n-\Pi_\scrX(x_n - T(x_n))}\leq \eps =10^{-3}.
\]
Our numerical experiments involve dimension $d\in\{200,500,1000,2000\}$, and for each value of $d$ we perform $10$ runs and compare the average number of iterations and CPU time. The results are displayed in \cref{TableFraProg}, \cref{FracProgBox1,FracProgBox2}. It can be seen that \ac{SFBF} is constantly about $1.5$ faster than \ac{SEG} in both computational time and number of iterations. An interesting observation is that the number of iterations seems not to depend on the problem dimension.

\begin{table}
	\caption{Averaged over 100 runs for fractional problems of different size }\label{TableFraProg}
	\bigskip
	\centering
	\renewcommand{\arraystretch}{1.25}
	\begin{tabular}{|c | c c | c c| }
		\hline
		~&\ac{SFBF}&&\ac{SEG}&~\\
		d &number of iterations&time(sec.)&number of iterations&time(sec.)\\
		\hline
	200 &29.88&0.0473&43.96&0.0835\\
		\hline
	500 &29.84&0.2647&44.49&0.3793\\
		\hline
	1000 &30.14&1.1650&44.99&1.7017\\
		\hline
  2000 & 30.54 &8.0487&45.68&11.4803\\
	  \hline	
	\end{tabular}
\end{table}

 \begin{figure}[ht!]
 	\centering
 	\includegraphics[width=0.45\textwidth]{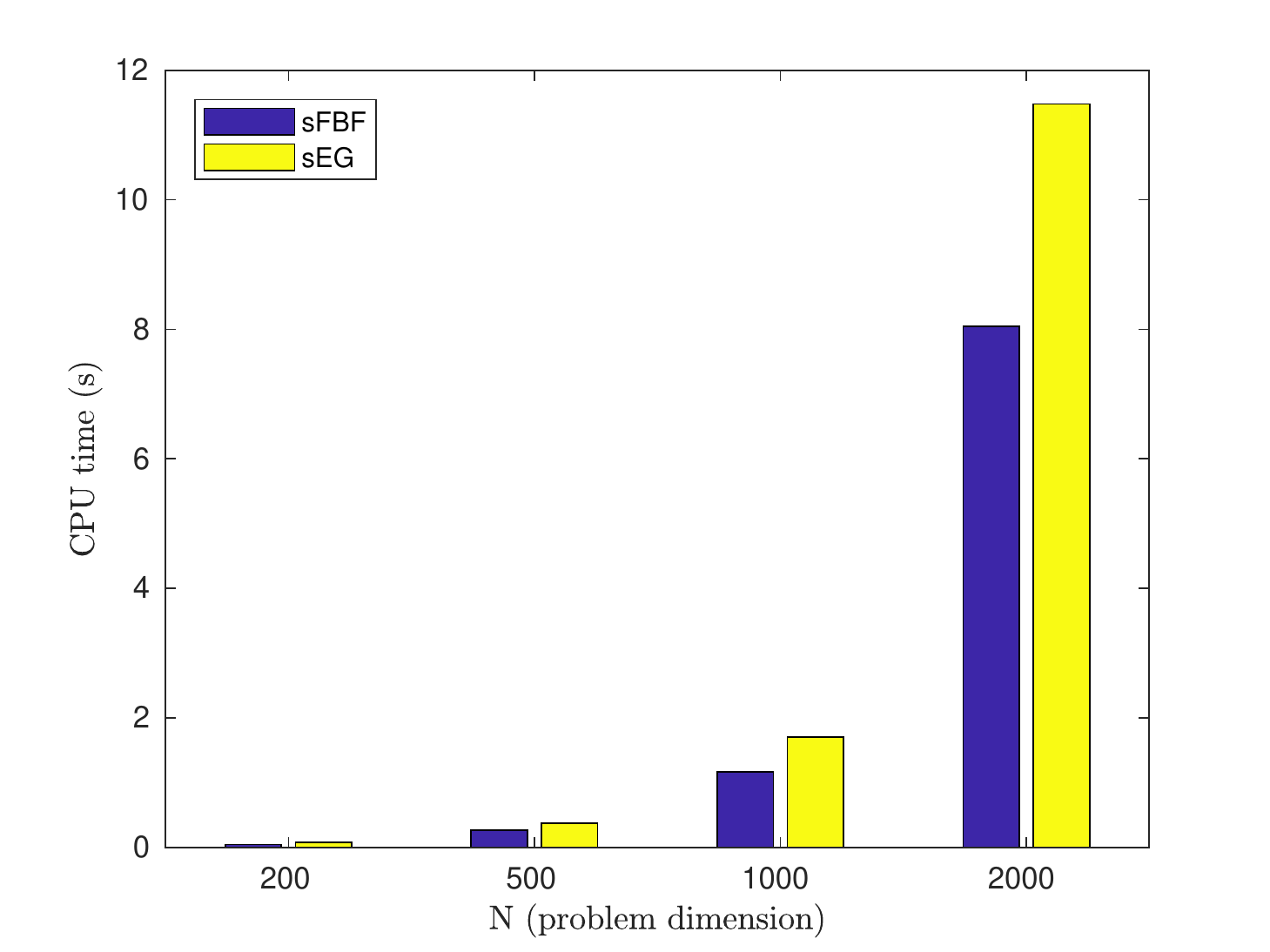}
 	\includegraphics[width=0.45\textwidth]{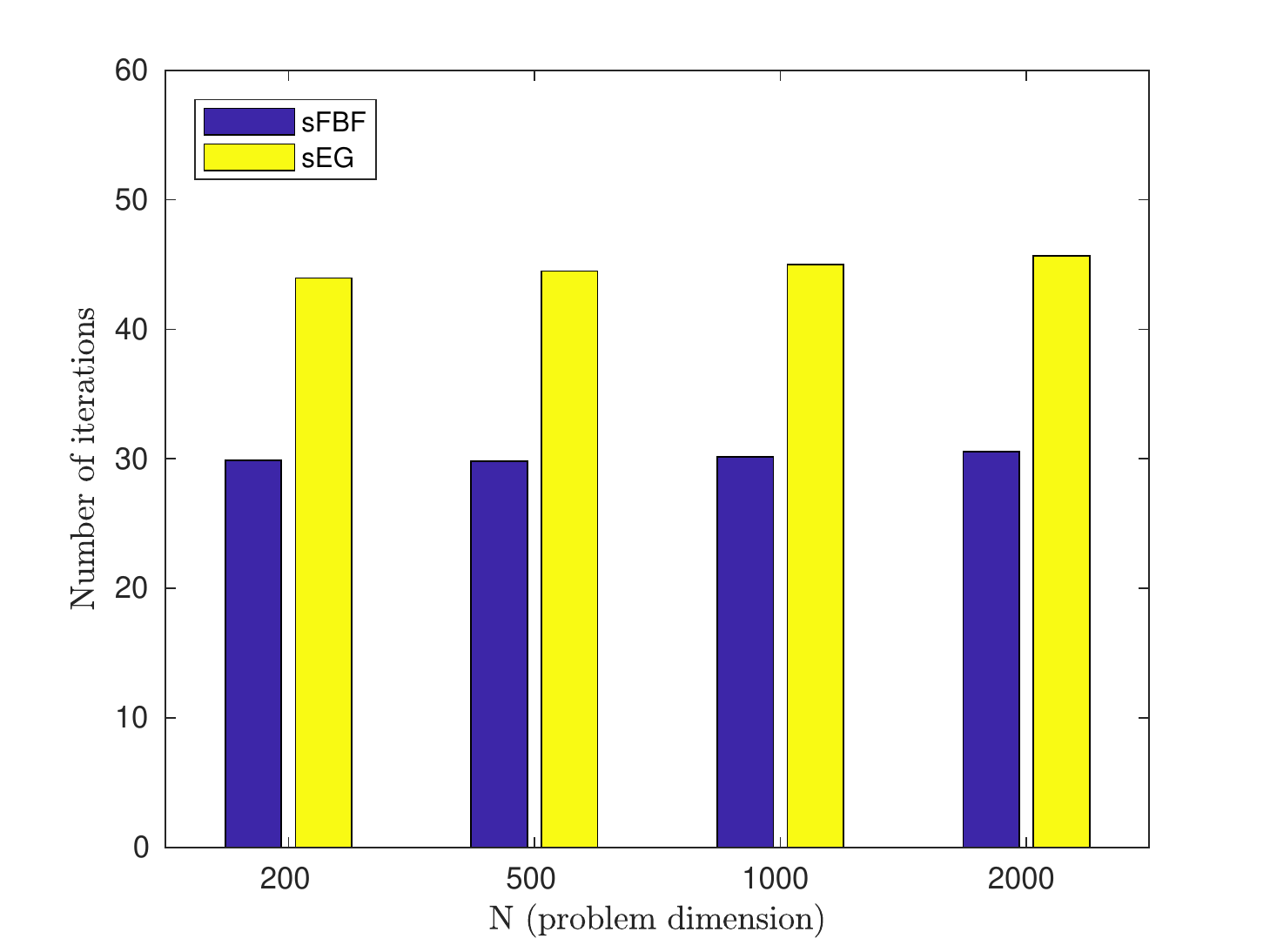}
 	\caption{Comparison between \ac{SFBF} and \ac{SEG} for solving the fractional programming.
 		We represent the averaged CPU time (left) and averaged number of iterations (right) for 100 random instances.}\label{FracProgBox1}
 \end{figure} 
 
 \begin{figure}[ht!]
 	\centering
  \includegraphics [width=0.45\textwidth]{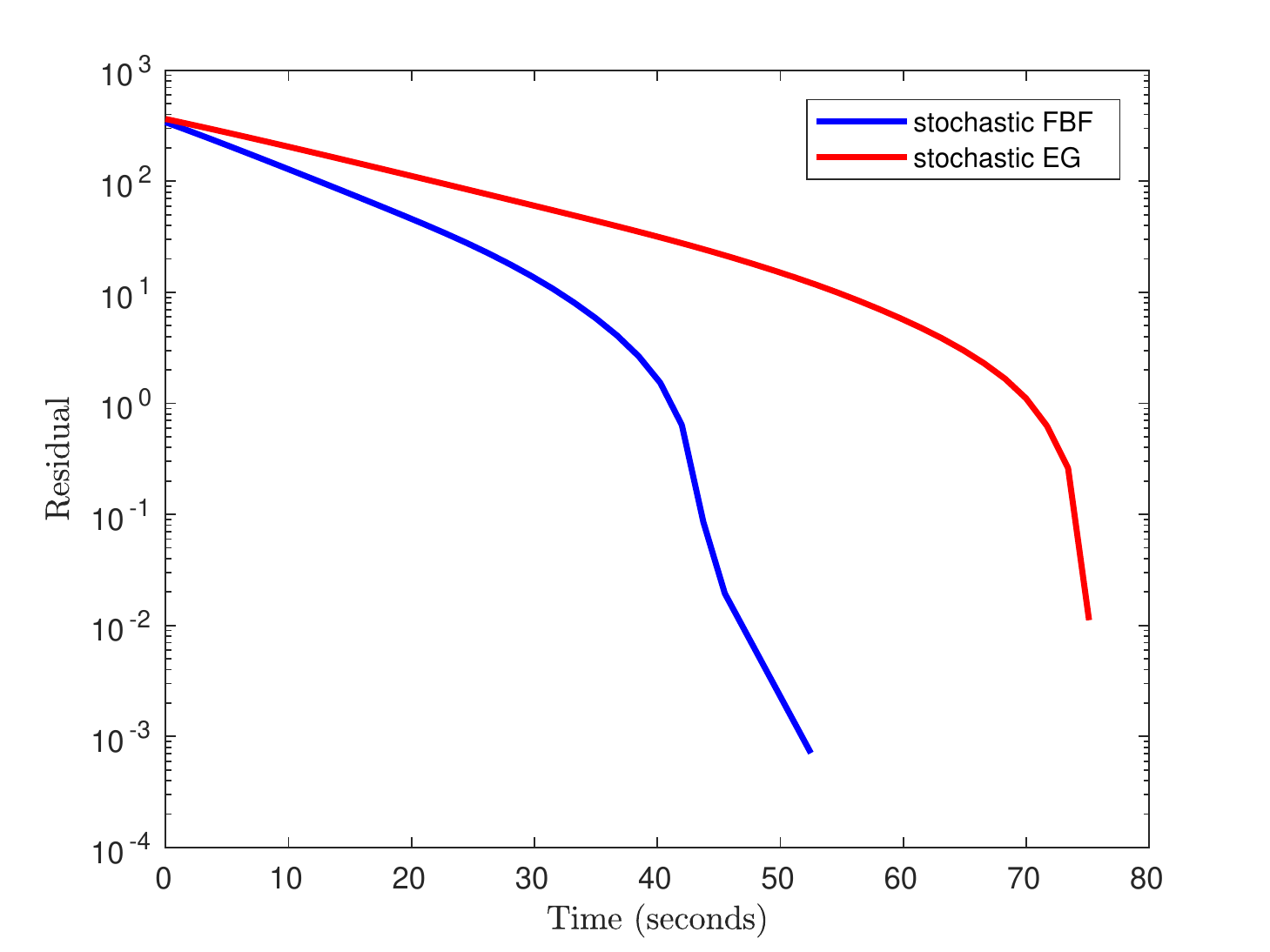} 
 	\includegraphics [width=0.45\textwidth]{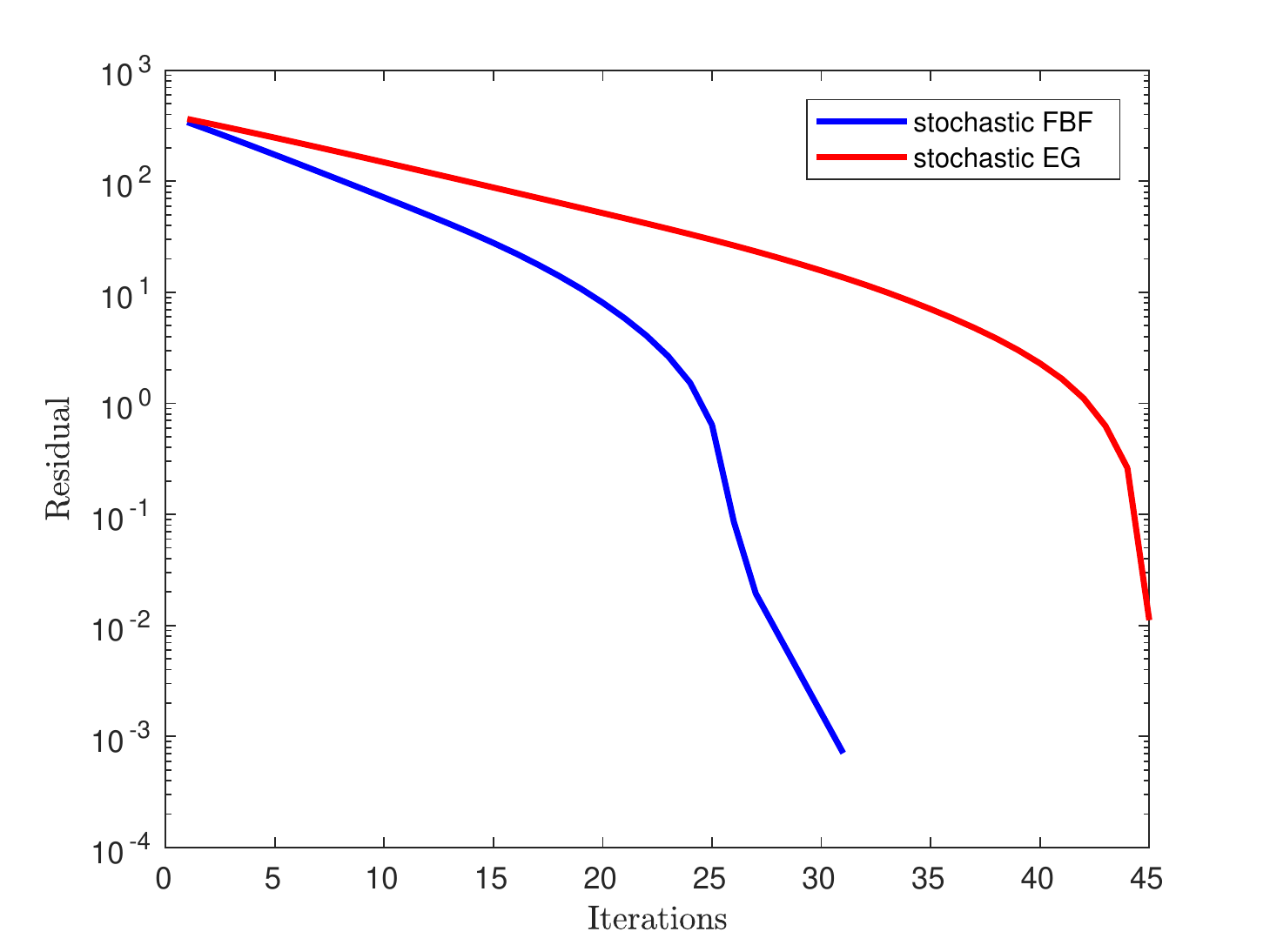}   		
 	\caption{Comparison between \ac{SFBF} and \ac{SEG} for solving the fractional programming.
 		We represent the residual vs.
running time (left) and number of iterations (right) for one random example $n=5000$.}\label{FracProgBox2}
\end{figure} 

\end{experiment}

\newcommand{\EE}{\operatorname{EE}}
\newcommand{\rate}{R}

\newcommand{\tx}{M}
\newcommand{\rx}{N}

\newcommand{\bH}{H}
\newcommand{\bI}{I}
\newcommand{\bX}{X}
\newcommand{\bz}{z}

\newcommand{\effH}{\tilde H}
\newcommand{\pc}{P^{c}}
\newcommand{\pt}{P^{t}}
\newcommand{\pmax}{P_{\max}}

\begin{experiment}[Energy efficiency in multi-antenna communications]
\label{exp:EE}
Energy efficiency is one of the most important requirements for mobile systems, and it plays a crucial role in preserving battery life and reducing the carbon footprint of multi-antenna devices (i.e., wireless devices equipped with several antennas to multiplex and demultiplex received or transmitted signals).

Following \cite{ICJF12,FJLC+13,MB16}, the problem can be formulated as follows:
consider $K$ wireless devices (e.g., mobile phones), each equipped with $\tx$ transmit antennas and seeking to connect to a common base-station with $\rx$ receiver antennas.
In this case, the users' achievable throughput (received bits/sec) is given by the familiar Shannon\textendash Telatar capacity formula \cite{Tel99}:
\begin{equation}
\label{eq:rate}
\textstyle
\rate(X;H)
	= \log\det\left( \Id + \sum_{k=1}^{K} H_{k} X_{k} H_{k}^{\dag} \right),
\end{equation}
where:
\begin{enumerate}
\addtolength{\itemsep}{\smallskipamount}
\item
$X_{k}$ 
is the $\tx\times\tx$ Hermitian \emph{input signal covariance matrix} of user $k$ and $X = (X_{1},\dotsc,X_{K})$ denotes their aggregate covariance profile.
As a covariance matrix, each $X_{k}$ is Hermitian positive semi-definite.
\item
$H_{k}$ 
is the $\rx\times\tx$ \emph{channel matrix} of user $k$, representing the quality of the wireless medium between user $k$ and the receiver.
\item
$\Id$ is the $\rx\times\rx$ identity matrix.
\end{enumerate}
\medskip

In practice, because of fading and other signal attenuation factors, the channel matrices $H_{k}$ are random variables, so the users' achievable throughput is given by
\begin{equation}
\label{eq:rate-ex}
\rate(X)
	= \Ex_{H}[\rate(X;H)],
\end{equation}
where the expectation is taken over the (often unknown) law of $H$.
The system's \acdef{EE} is then defined as the ratio of the users' achievable throughput per the unit of power consumed to achieved, i.e.,
\begin{equation}
\label{eq:EE}
\EE(X)
	= \frac{\rate(X)}{\sum_{k=1}^{K} [\pc_{k} + \pt_{k}]},
\end{equation}
where
\begin{enumerate}
\addtolength{\itemsep}{\smallskipamount}
\item
$\pt_{k}$ is the transmit power of the $k$-th device;
by elementary signal processing considerations, it is given by $\pt_{k} = \tr(X_{k})$.
\item
$\pc_{k} > 0$ is a constant representing the total power dissipated in all circuit components of the $k$-th device (mixer, frequency synthesizer, digital-to-analog converter, etc.), \emph{except} for transmission.
For concision, we will also write $\pc = \sum_{k} \pc_{k}$ for the total circuit power dissipitated by the system.
\end{enumerate}
\smallskip
The users' transmit power is further constrained by the maximum output of the transmitting device, corresponding to a trace constraint of the form
\begin{equation}
\tr(X_{k})
	\leq \pmax
	\quad
	\forall k=1,\dotsc, K.
\end{equation}
Hence, putting all this together, we obtain the stochastic fractional problem:
\smallskip
\begin{equation}
\label{eq:EE-opt}
\begin{aligned}
\textrm{maximize}
	&\quad
	\EE(X)
		= \frac{\Ex_{H}[\rate(X;H)]}{\pc + \sum_{k=1}^{K} \tr(X_{k})}
	\\[.5ex]
\textrm{subject to}
	&\quad
	X_{k} \mgeq 0,
	\\
	&\quad
	\tr(X_{k}) \leq \pmax\qquad \forall k=1,2,\ldots,K.
\end{aligned}
\end{equation}
\smallskip
Note that the overall problem dimension is $d=KM^{2}$. The \acl{EE} objective of this problem (which, formally, has units of bits/Joule) has been widely studied in the literature \cite{CGB04,ICJF12} and it captures the fundamental trade-off between higher spectral efficiency and increased battery life.
Importantly, switching from maximization to minimization, we also see that \eqref{eq:EE-opt} is of the general form \eqref{eq:frac}, so it can be solved by applying the \ac{SFBF} algorithm:
in fact, given the costly projection step to the problem's feasible region, \ac{SFBF} seems ideally suited to the task.

We do so in a series of numerical experiments reported in \cref{fig:EE}.
Specifically, we consider a network consisting of $K=16$ users, each with $\tx = 4$ transmit antennas, and a common receiver with $\rx = 128$ receive antennas.
To simulate realistic network conditions, the users' channel matrices are drawn at each update cycle from a COST Hata radio propagation model with Rayleigh fading \cite{Hat80};
to establish a baseline, we also ran an experiment with static, deterministic channels.
For comparison purposes, we ran both \ac{SFBF} and \ac{SEG} with the same variance reduction schedule, the same number of iterations, and step-sizes chosen as in \cref{exp:quadratic}; also, to reduce statistical error, we performed $S=100$ sample runs for each algorithm.
As in the case of \cref{exp:quadratic}, the \ac{SFBF} algorithm performs consistently better than \ac{SEG}, converging to a given target value between $1.5$ and $3$ times faster.


\begin{figure}[tbp]
\centering
\footnotesize
\includegraphics[width=.49\textwidth]{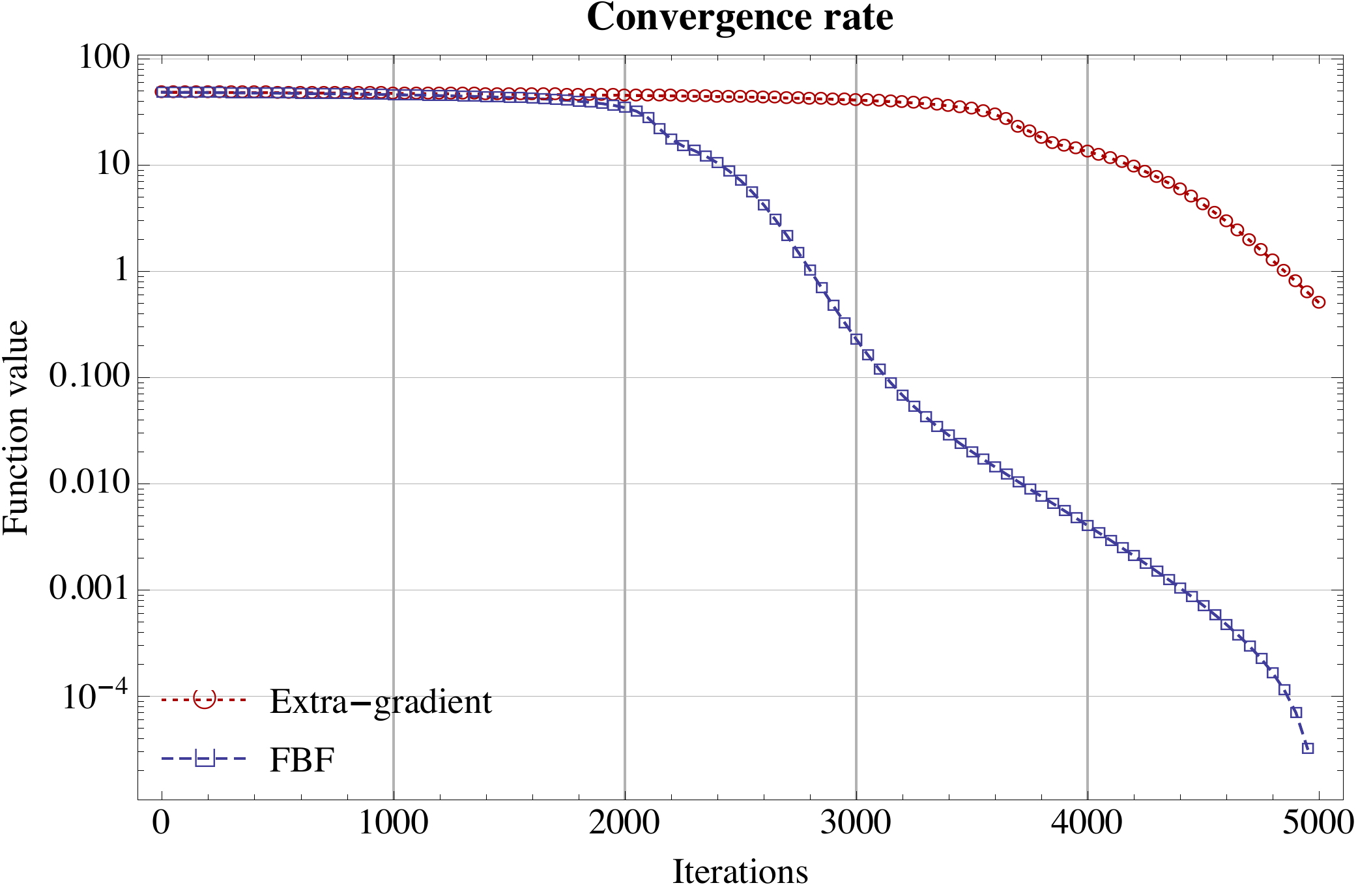}
\hfill
\includegraphics[width=.475\textwidth]{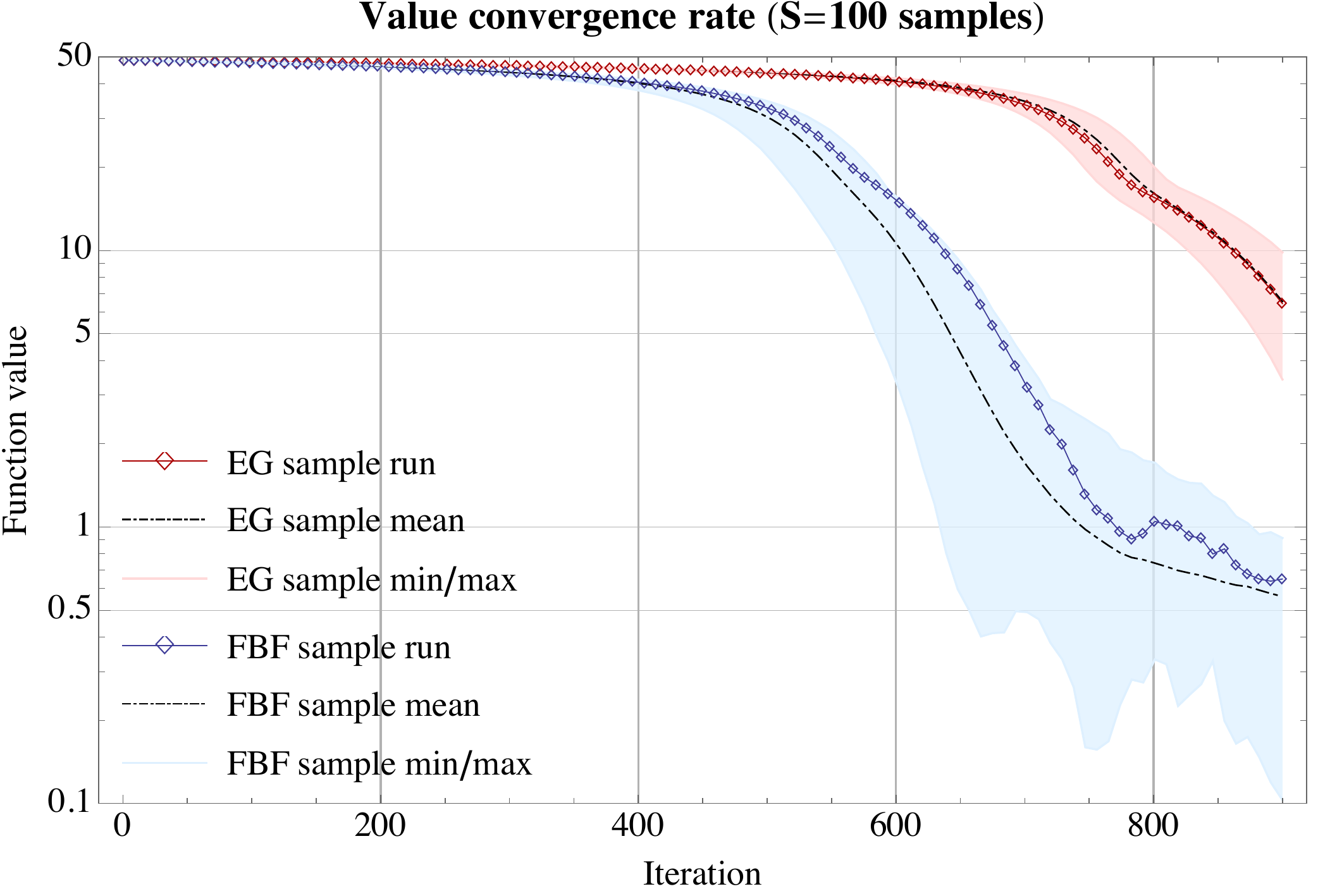}
\caption{Comparison of the extra-gradient and \ac{FBF} methods in the energy efficiency maximization problem \eqref{eq:EE-opt}.
On the left, we considered static channels, and we ran \ac{SFBF} and \ac{SEG} with the same initialization.
On the right, we considered ergodic channels following a Rayleigh fading model and we performed $S = 100$ sample runs for each algorithm;
we then plotted a sample run, the sample mean, and the best and worst values at each iteration for each algorithm.
In all cases, \ac{SFBF} exhibits significant performance gains over \ac{SEG}.}
\label{fig:EE}
\end{figure}


\end{experiment}

\subsection{Matrix Games}
As numerical illustration we investigate the performance of the algorithm to compute Nash equilibria in random matrix games. To be specific, we revisit in this experiment the problem of computing one Nash equilibrium in random two-player bimatrix games. A bimatrix game presented in its mixed extension consists of a tuple $\scrG=\left(\{I,II\},(u_{I},u_{II}),(S_{I},S_{II})\right)$, defined by
\begin{itemize}
	\item the set of players $\{I,II\}$;
	\item strategy sets $S_{I}\eqdef\{p\in\R^{n_{I}}_{+}\vert \sum_{i=1}^{n_{I}}p_{i}=1\},S_{II}\eqdef\{q\in\R^{n_{II}}_{+}\vert \sum_{i=1}^{n_{II}}q_{i}=1\}$;
	\item real valued utility functions $u_{I}(p,y)\eqdef p^{\top}U_{I}q,u_{II}(p,q)\eqdef q^{\top}U_{II}^{\top}y$, defined by
	the matrices $(U_{I},U_{II})$, both of which are real matrices of dimension $n_{I}\times n_{II}$.
\end{itemize}
Recall that a pair of mixed actions $(p^{\ast},q^{\ast})$ is called a Nash equilibrium of the bimatrix game $(U_{I},U_{II})$, if
\begin{align*}
&p_{i}^{\ast}>0\Rightarrow (U_{I}q)_{i}=\max_{1\leq j\leq n_{I}}(U_{I}q)_{j}\text{ and }\\
&q_{i}^{\ast}>0\Rightarrow (U_{II}^{\top}p)_{i}=\max_{1\leq j\leq n_{II}}(U_{II}^{\top}p)_{j}.
\end{align*}
The bimatrix game $\scrG$ is symmetric if $n_{I}=n_{II}$ and $U_{I}=U_{II}$. In symmetric games, it is natural to focus on symmetric Nash equilibria, which is a Nash equilibrium $(p^{\ast},q^{\ast})$ with $p^{\ast}=q^{\ast}$.

Let $d\eqdef n_{I}+n_{II}$, and note that $\R^{d}\cong \R^{n_{I}}\times \R^{n_{II}}$, via the usual embedding of a pair $(p,q)$ to a stacked vector in $\R^{d}$. Define the $d\times d$ matrix
\begin{equation}\label{eq:M}
M\eqdef\left[\begin{array}{cc} 0 & -U_{I}\\ -U_{II}^{\top} & 0\end{array}\right],
\end{equation}
and consider the set
\begin{equation}
\scrX\eqdef \{(x_{1},x_{2})\in\R^{n_{I}}_{+}\times\R^{n_{II}}_{+}\vert U_{I}x_{2}\leq\1_{n_{I}}\text{ and }U_{II}^{\top}x_{1}\leq\1_{n_{II}}\}.
\end{equation}
It is a classical fact that a Nash equilibrium $(p^{\ast},q^{\ast})$ can be computed by finding a pair $(x_{1},x_{2})\neq(\0_{n_{I}},\0_{n_{II}})\in\scrX$ such that
\begin{align*}
x_{1}^{\top}(\1_{n_{I}}-U_{I}x_{2})=0,\text{ and }x_{2}^{\top}(\1_{n_{II}}-U_{II}^{\top}x_{1})=0.
\end{align*}
The payoffs of the players in equilibrium can be recovered by looking at $v=\frac{1}{\sum_{j=1}^{n_{I}} x_{1,j}},u=\frac{1}{\sum_{i=1}^{n_{II}}x_{2,i}}$, and the mixed actions defining equilibrium play are recovered by $p=x_{1}\cdot v,q=x_{2}\cdot u$. It is clear that $(\0_{n_{I}},\0_{n_{II}})$ is always a solution to the \emph{linear complementarity problem}
\begin{equation}\label{eq:LCP}
\left\{\begin{array}{l}
x_{1}^{\top}(\1_{n_{I}}-U_{I}x_{2})=0, \1_{n_{I}}-U_{I}x_{2}\geq \0_{n_{I}},\\
x_{2}^{\top}(\1_{n_{II}}-U_{II}^{\top}x_{1})=0,\1_{n_{II}}-U_{II}^{\top}x_{1}\geq \0_{n_{II}}.
\end{array}\right.
\end{equation}
This the so-called \emph{artificial equilibrium} of the game, and serves as the initial point in the most used algorithm for computing Nash equilibria in bimatrix games, the Lemke-Howson algorithm, as masterly surveyed in \cite{VonStengel02}. Defining the mapping $T:\R^{d}\cong \R^{n_{I}}\times \R^{n_{II}} \to \R^{d}\cong \R^{n_{I}}\times \R^{n_{II}}$, by 
\begin{equation} \label{eq:Tpart}
T(x)\eqdef \left[\begin{array}{c} \1_{n_{I}}\\ \1_{n_{II}}\end{array}\right]+Mx
\end{equation}
we can reformulate the conditions \eqref{eq:LCP} compactly as
\begin{equation}
x^{\ast}\geq\0_{n}\text{ and }T(x^{\ast})\geq\0_{n},\inner{x^{\ast},T(x^{\ast})}=0.
\end{equation}

To turn this into a stochastic complementarity problem, we consider a stochastic Nash game \cite{KanShan12,DuvMerStaVer18}, where the player set and the set of mixed actions if fixed, but the payoff functions are realizations of random matrices 
\begin{align*}
U_{I}^{n}=U_{I}(\xi_{n}),U_{II}^{n}=U_{II}(\xi_{n})
\end{align*}
and $(\xi_{n})$ is a random process in some set $\Xi$, defined on a probability space $(\Omega,\scrF,\Pr)$. For each $n\geq 1$, we look at that random operator 
\begin{equation}
F(x,\xi_{n})\eqdef \left[\begin{array}{c} \1_{n_{I}}\\ \1_{n_{II}}\end{array}\right]+M(\xi_{n})x,
\end{equation}
and run Algorithm \ac{SFBF}. 


In our experiments, $M$ is defined as in \eqref{eq:M} and $d=n_{I}+n_{II}$.  Each element of the matrices $U_{I}, U_{II}$ is generated randomly with uniform distribution in $(0,1)$. To setup the experiments, we generate random matrices 
$M(\xi) := M+ V(\xi)$, where $V(\xi) $ is a $d\times d$ random matrix with zero mean and normal distribution with derivation $\sigma=0.1$. Since the operator $T$ is Lipschitz continuous with modulus $L=\|M\|$, we run \ac{SEG} and \ac{SFBF} with constant stepsizes $\alpha_{FBF}= \frac{0.99}{\sqrt{2}L}$, and $\alpha_{EG}= \frac{0.99}{\sqrt{6}L}$, respectively. We choose the batch size sequence $m_{n+1}=\left[ \frac{(n+1)^{1.5}}{d}\right] $ so that \cref{ass:batch} is satisfied. The same stopping criterion as in the previous experiments of \cref{sec:fracprog} is used. 

From the numerical experiments, we observe that the \ac{SFBF} outperforms the \ac{SEG}, being on average 1.7 times faster in computational time and 1.5 times faster in number of iterations. The difference becomes larger as the problem dimension increases. There are two reasons for results: firstly, \ac{SEG} requires two projections per iteration while \ac{SFBF} only requires one and more importantly, the stepsize of \ac{SFBF} is $\sqrt{3}$ times larger than that of \ac{SEG}.  

\begin{experiment}[Zero-Sum games]
\label{ex:zerosum}
We compare the performance  \ac{SFBF} and \ac{SEG} for zero sum game, i.e.,  $U_{I}=-U_{II}^{T}$. The results are displayed in \cref{ZeroSum} and \cref{Barchart1} showing the advantage of \ac{SFBF} over \ac{SEG}. On average, \ac{SFBF} is 1.7 times faster in computational time and 3.4 times faster in number of iterations than \ac{SEG}. 

\begin{table}
	\caption{Averaged over 100 runs for zero sum game of different size }\label{ZeroSum}
	\bigskip
	\centering
	\renewcommand{\arraystretch}{1.25}
	\begin{tabular}{|c | c c | c c| }
		\hline
		Dimension &\ac{SFBF} &&\ac{SEG}&~\\
		$d=n_I+n_{II}$ &Iterations&time(sec.)&Iterations&time(sec.)\\
		\hline
		$n_I=n_{II}=100$ &84.38&0.4421&172.42&1.4768\\
		\hline
		$n_I=n_{II}=250$ &214.09&9.2088&372.80&32.4321\\
		\hline
		$n_I=n_{II}=500$ &430.18&73.9068&749.65&270.5911\\
		\hline
		$n_I=n_{II}=1000$ &865.67  & 672.0806& 1508.50&2535.50\\
		\hline	
	\end{tabular}
\end{table}

\end{experiment}

\begin{experiment}[Symmetric game]
\label{exp:sym}
 We compare the performance \ac{SFBF} and \ac{SEG} for symmetric game, i.e., $U_{I},U_{II}$ are symmetric and  $U_{I}=U_{II}^{T}$. We choose $n_I=n_{II}\in\left\lbrace 50, 100, 150,\ldots,500 \right\rbrace $ and $d=n_{I}+n_{II}$. The results are displayed in \cref{SymmetricGame} and \cref{Barchart1} showing the advantage of \ac{SFBF} over \ac{SEG}. 

\begin{table}
	\caption{Averaged over 100 runs for symmetric game of different size }\label{SymmetricGame}
	\bigskip
	\centering
	\renewcommand{\arraystretch}{1.25}
	\begin{tabular}{|c | c c | c c| }
		\hline
		Dimension &\ac{SFBF} && \ac{SEG} &~\\
		$d=n_I+n_{II}$ &Iterations&time(sec.)&Iterations&time(sec.)\\
		\hline
		$n_I=n_{II}=100$ &52.00&0.3882&68.68&0.6293\\
		\hline
		$n_I=n_{II}=250$ &97.96&2.589&142.55&5.1276\\
		\hline
		$n_I=n_{II}=500$ &173.30&10.5297&247.30&21.0797\\
		\hline
		$n_I=n_{II}=1000$ & 319.92 &92.0417&455.48&191.6854\\
		\hline	
	\end{tabular}
\end{table}

 \begin{figure}[ht!]
 	\centering
 	\includegraphics[width=0.45\textwidth]{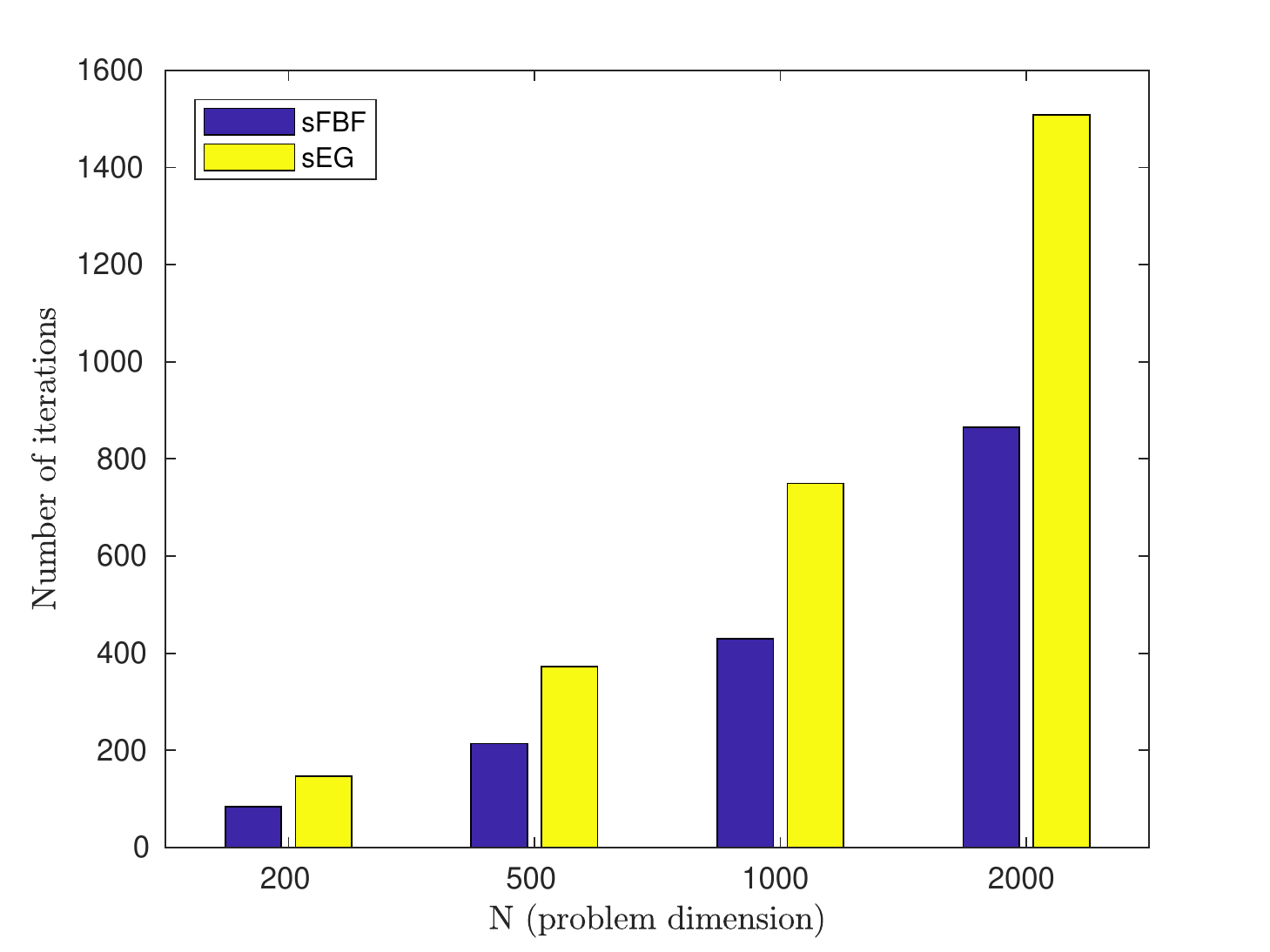}
 	\includegraphics[width=0.45\textwidth]{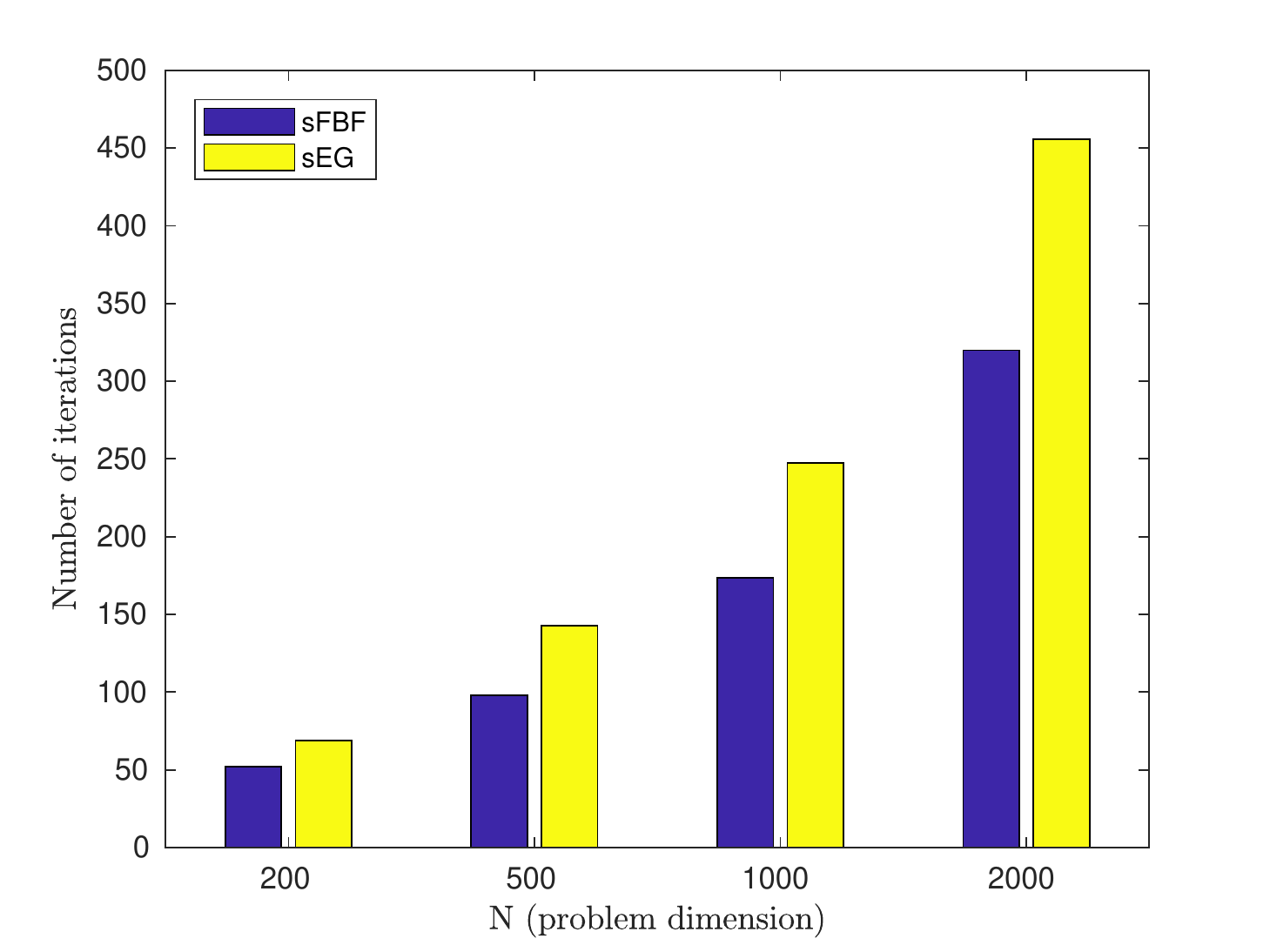}
 	\caption{Comparison on number of iterations  between \ac{SFBF} and \ac{SEG} for solving zeros sum game (left) and symmetric game (right).
 		}\label{Barchart1}
 \end{figure} 
 
 \end{experiment}
 
\begin{experiment}[Bimatrix Games]
\label{exp:bimatrix}
We compare the performance  SFBF and SEG  for asymmetric game. We choose $n_I\in\left\lbrace 100, 200,\ldots,1000 \right\rbrace $ and $n_{II}=2 n_I$. The results are displayed in \cref{AsymmetricGame} and \cref{Asymmetric_Fig1}  and \cref{Asymmetric_Fig2} showing the advantage of SFBF over SEG. 

\begin{table}
	\caption{Averaged over 100 runs for asymmetric game of different size }\label{AsymmetricGame}
	\bigskip
	\centering
	\renewcommand{\arraystretch}{1.25}
	\begin{tabular}{|c | c c | c c| }
		\hline
		~&\ac{SFBF} && \ac{SEG} &~\\
		$d=n_I+n_{II}$ &Iterations&time(sec.)&Iterations&time(sec.)\\
		\hline
		$n_I=100, n_{II}=200$ &100.28&1.9553&155.28&4.8202\\
		\hline
		$n_I=300, n_{II}=600$ &293.36&32.3010&466.01&90.2339\\
		\hline
		$n_I=500, n_{II}=1000$ &492.21&136.7019&779.86&394.7606\\
		\hline
		$n_I=1000, n_{II}=2000$ &992.64&1597.7266&1564.12 &46559.2133\\
		\hline	
	\end{tabular}
\end{table}

  \begin{figure}[ht!]
  	\centering
  	\includegraphics [width=0.45\textwidth]{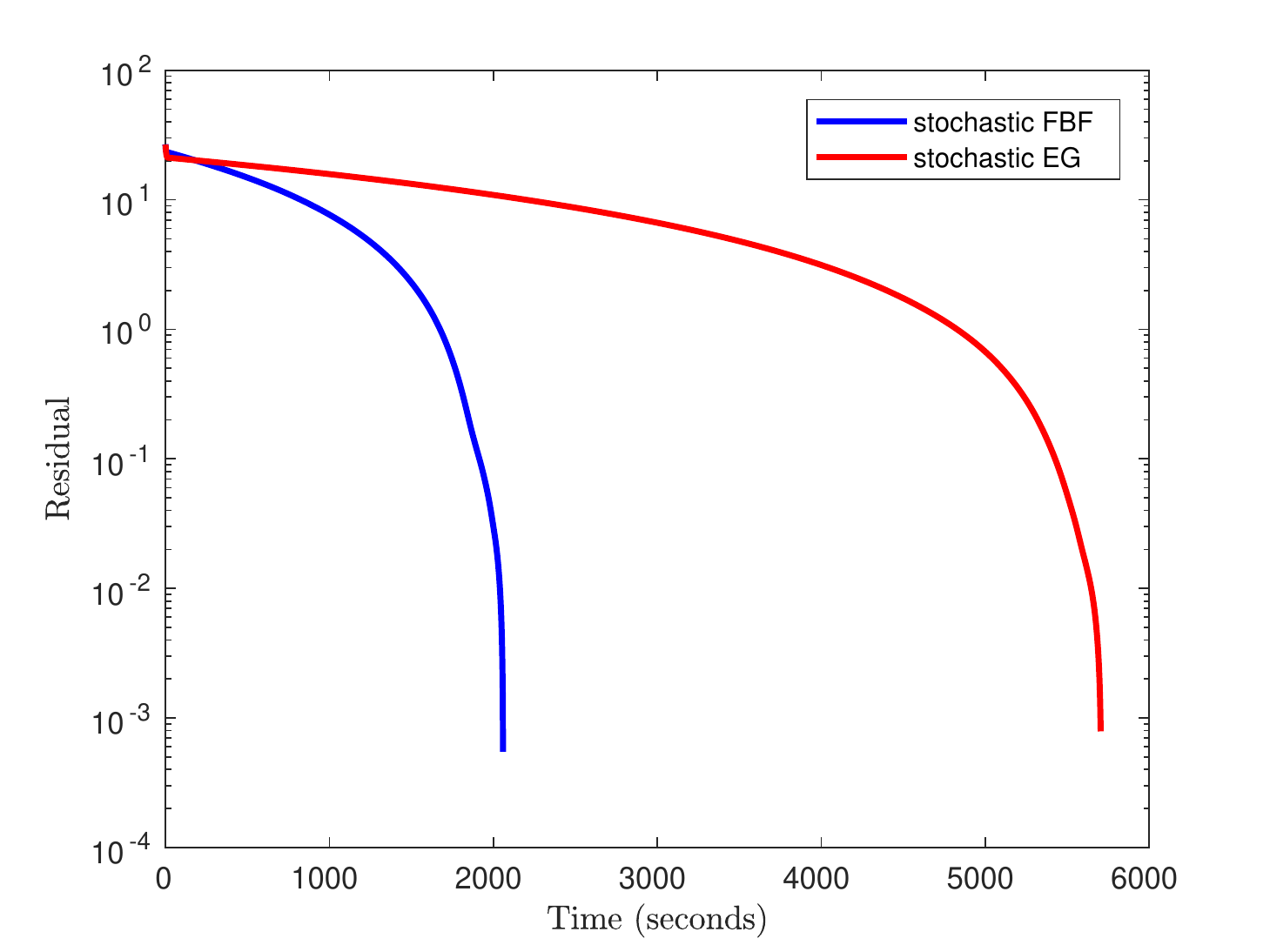} 
  	\includegraphics [width=0.45\textwidth]{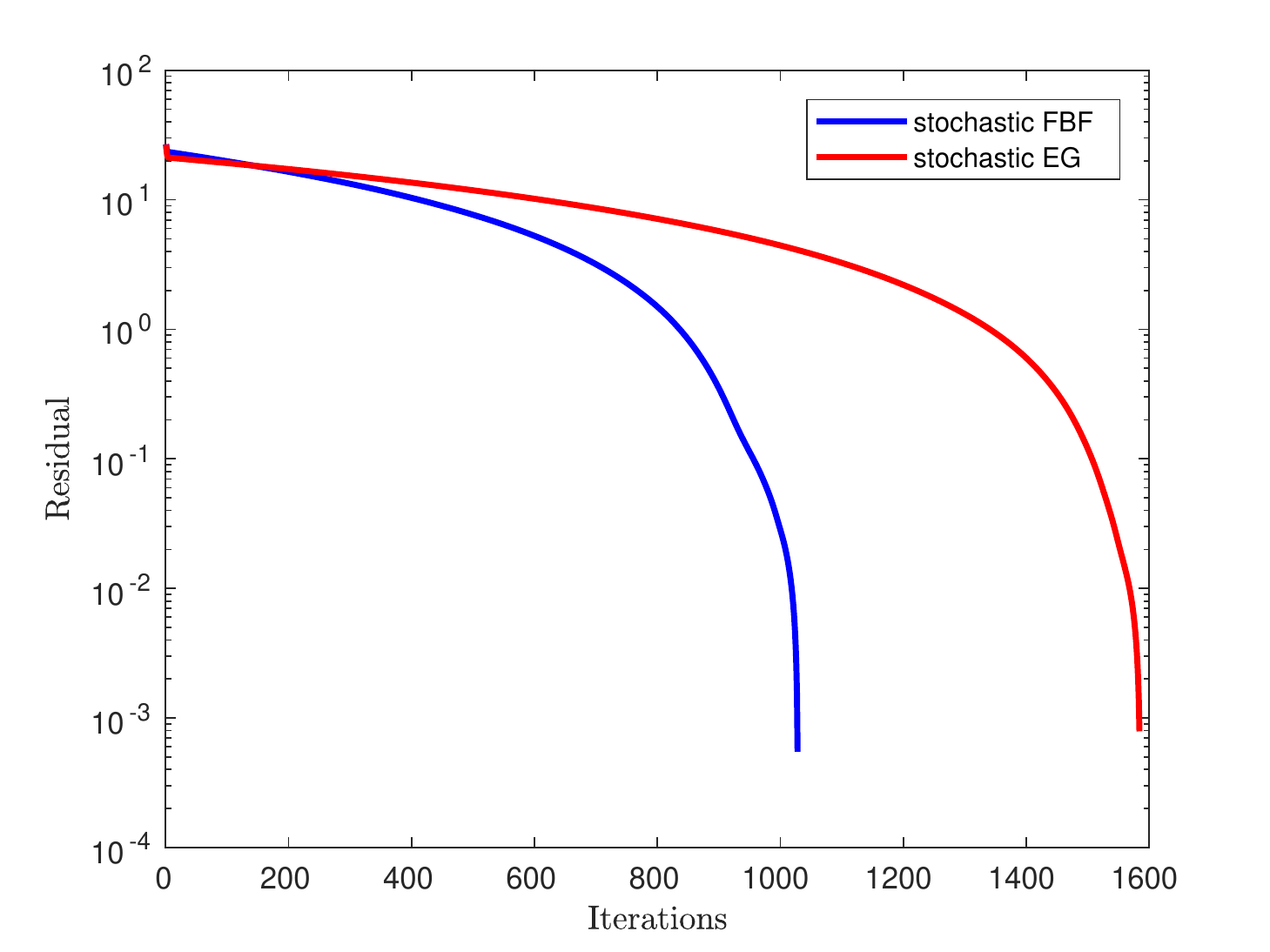}     		
  	\caption{Comparison between \ac{SFBF} and \ac{SEG} for solving the asymmetric game.
  		We represent the Residual vs. running time (left) and number of iterations (right) for one random example  $n_I=1000, n_{II}=2000$.}\label{Asymmetric_Fig1}
  \end{figure}

  \begin{figure}[ht!]
  	\centering
  	\includegraphics [width=0.75\textwidth]{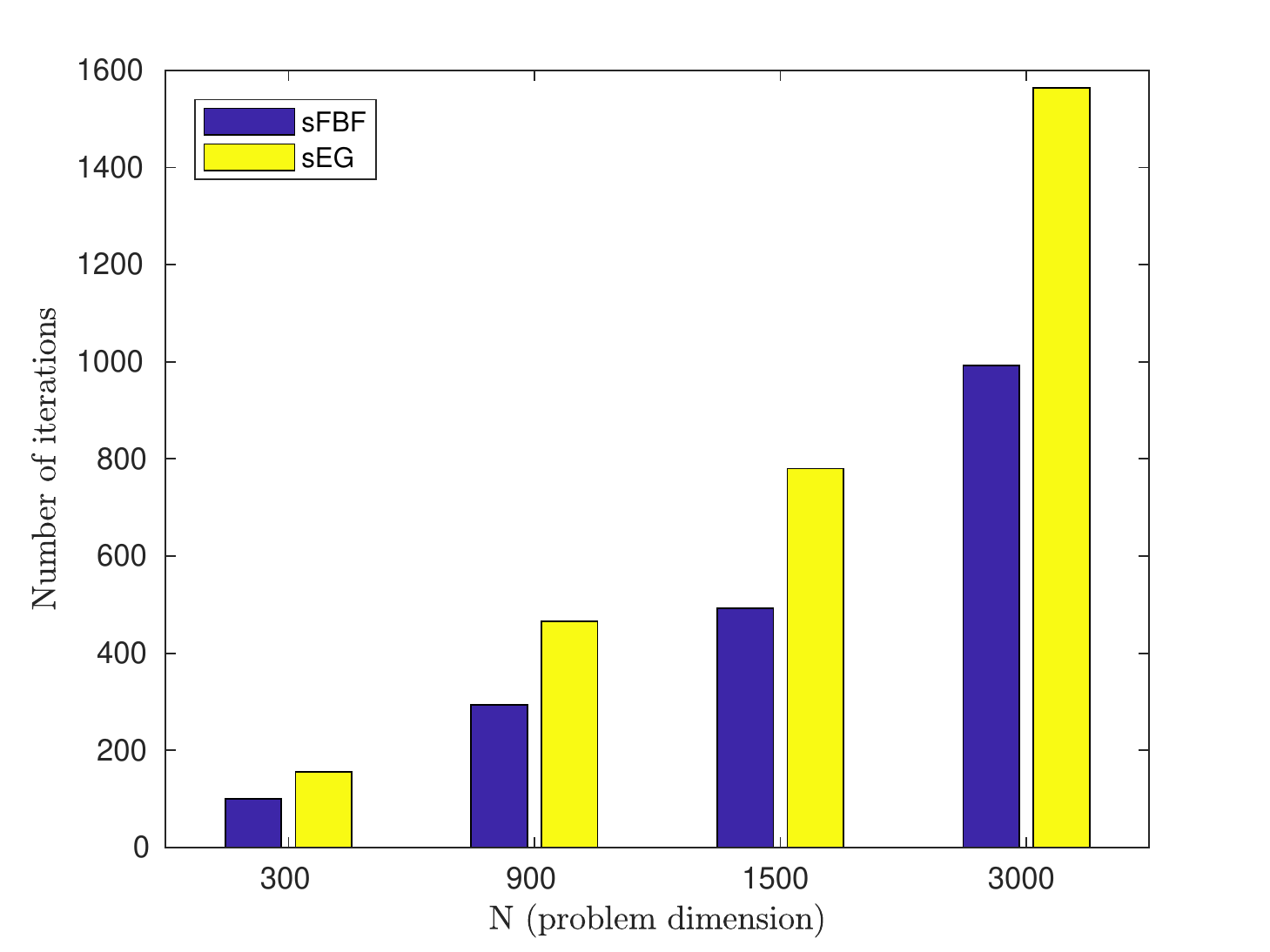}   		
  	\caption{Comparison on number of iterations between \ac{SFBF} and \ac{SEG} for solving the asymmetric game.
  		}\label{Asymmetric_Fig2}
  \end{figure} 
  
  \end{experiment}

\section{Conclusion}
\label{sec:conclusion}
In this paper we have developed a stochastic version of Tseng's forward-backward-forward algorithm for solving stochastic variational inequality problems over nonempty closed and convex sets. As in \cite{IusJofOliTho17}, the current analysis can be generalized to Cartesian $\VI$ problems, though have not done this explicitly. We show that the known theoretical convergence guarantees of  \ac{SEG} carry over to this setting, but our method consistently outperforms \ac{SEG} in terms of convergence rate and complexity. We therefore believe that \ac{SFBF} is a serious competitor to \ac{SEG} in typical primal-dual settings, where feasibility is a minor issue. Interesting directions for the future are to test the performance of the method in other instances where variance reduction is of importance, such as in composite optimization involving a large but finite sum of functions. Another possible extenstion would be to develop an infinite-dimensional Hilbert space version of the algorithm, and modify the basic \ac{SFBF} scheme to induce strong convergence of the iterates. We will investige these, and other issues, in the future.

\appendix
\section{Auxiliary Results}
\subsection{Proof of \cref{lem:Approx}}\label{app:A}
We start with a general result. Let $N\in\N$ and $\xi^{(1)},\ldots,\xi^{(N)}$ be an i.i.d sample from the measure $\measP$. Define the process $\left(M_{i}^{N}(x)\right)_{i=0}^{N}$ by $M_{0}(x)\eqdef 0$, and
for $1 \leq i \leq N$, by
\begin{equation}
M_{i}^{N}(x)\eqdef \frac{1}{N}\sum_{n=1}^{i}\left(F(x,\xi^{(n)})-T(x)\right)\qquad \forall x\in\mathbb{R}^{d}.
\end{equation}
Setting $\scrG_{i}\eqdef\sigma(\xi^{(1)},\ldots,\xi^{(i)}),1\leq i\leq N$, we see that the process $\{(M^{N}_{i}(x),\scrG_{i}),1\leq i\leq N\}$ is a martingale starting at zero.
\begin{lemma}\label{lem:M}
Let $p\geq 2$ be as specified in \cref{ass:variance}. For all $1\leq q \leq p,N\in\N$ and $x\in\mathbb{R}^{d}$, we have 
\begin{equation}
\Ex\left[\norm{M_{N}^{N}(x)}^{q}\right]^{\frac{1}{q}}\leq \frac{C_{q}}{\sqrt{N}}(\sigma(x^{\ast})+\sigma_{0}\norm{x-x^{\ast}}). 
\end{equation}
\end{lemma}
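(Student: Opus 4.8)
The plan is to apply the combined Burkholder--Davis--Gundy inequality (the display immediately following \cref{lem:BDG}) to the zero-mean martingale $(M_{i}^{N}(x))_{i=0}^{N}$, and then convert the resulting increment estimate into the desired bound via \cref{ass:variance}. First I would fix $x\in\mathbb{R}^{d}$ and $x^{\ast}\in\scrX_{\ast}$, and record that the martingale increments are $M_{i}^{N}(x)-M_{i-1}^{N}(x)=\tfrac{1}{N}\bigl(F(x,\xi^{(i)})-T(x)\bigr)$ for $1\leq i\leq N$. Since the sample $\xi^{(1)},\ldots,\xi^{(N)}$ is i.i.d., every increment carries the same $L^{q}$-moment, namely $\Ex[\norm{M_{i}^{N}(x)-M_{i-1}^{N}(x)}^{q}]^{1/q}=\tfrac{1}{N}s_{q}(x)$, where $s_{q}(x)=\Ex[\norm{F(x,\xi)-T(x)}^{q}]^{1/q}$.

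For the principal range $q\in[2,p]$ I would invoke the combined BDG bound with exponent $q$. Using $\norm{M_{N}^{N}(x)}\leq\sup_{0\leq i\leq N}\norm{M_{i}^{N}(x)}$, this yields
\[
\Ex[\norm{M_{N}^{N}(x)}^{q}]^{1/q}
  \leq C_{q}\sqrt{\sum_{i=1}^{N}\Ex\bigl(\norm{M_{i}^{N}(x)-M_{i-1}^{N}(x)}^{q}\bigr)^{2/q}}
  = C_{q}\sqrt{N\cdot\tfrac{s_{q}(x)^{2}}{N^{2}}}
  = \frac{C_{q}}{\sqrt{N}}\,s_{q}(x),
\]
the middle equality exploiting that the $N$ summands are identical. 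By the monotonicity of the $L^{q}(\Pr)$ norms we have $s_{q}(x)\leq s_{p}(x)$ whenever $q\leq p$, and \cref{ass:variance} then gives $s_{q}(x)\leq\sigma(x^{\ast})+\sigma_{0}\norm{x-x^{\ast}}$, which closes this case.

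The remaining range $q\in[1,2)$ is not reachable directly, because the combined BDG inequality requires an exponent of at least $2$; here I would simply dominate the lower moment by the second moment. Monotonicity of the $L^{q}(\Pr)$ norms gives $\Ex[\norm{M_{N}^{N}(x)}^{q}]^{1/q}\leq\Ex[\norm{M_{N}^{N}(x)}^{2}]^{1/2}$, and the already established $q=2$ estimate bounds the right-hand side by $\tfrac{C_{2}}{\sqrt{N}}\bigl(\sigma(x^{\ast})+\sigma_{0}\norm{x-x^{\ast}}\bigr)$; setting $C_{q}\eqdef C_{2}$ for $q\in[1,2)$ delivers the claim in this range as well.

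The argument is a careful bookkeeping exercise rather than a deep one. The only point genuinely demanding attention is the split at $q=2$ that is forced by the hypothesis of the BDG inequality, together with the two uses of norm monotonicity: one to pass from $s_{q}$ to $s_{p}$ in the upper range so that \cref{ass:variance} applies, and one to pass from the $q$-th moment to the second moment in the lower range. I expect no essential obstacle beyond keeping the constants and the i.i.d.\ summation straight.
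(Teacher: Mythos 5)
Your proof is correct and follows essentially the same route as the paper: bound the i.i.d.\ martingale increments, apply the combined Burkholder--Davis--Gundy/Minkowski inequality, and invoke \cref{ass:variance} via monotonicity of $L^{q}(\Pr)$ norms. In fact, your explicit split at $q=2$ is slightly more careful than the paper's own proof, which applies the combined BDG bound (stated only for exponents at least $2$) uniformly for all $q\in[1,p]$ without addressing the range $q\in[1,2)$.
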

\begin{proof}
For $i\in\{1,2,\ldots,N\}$, the monotonicity of $L^{p}(\Pr)$ norms implies that 
\begin{align*}
\Ex\left[\norm{\Delta M_{i-1}^{N}(x)}^{q}\right]^{\frac{1}{q}}&=\frac{1}{N}\Ex\left[\norm{F(x,\xi^{(i)})-T(x)}^{q}\right]^{\frac{1}{q}}\\
&\leq \frac{1}{N}\Ex\left[\norm{F(x,\xi^{(i)})-T(x)}^{p}\right]^{\frac{1}{p}}\\
&\leq\frac{\sigma(x^{\ast})+\sigma_{0}\norm{x-x^{\ast}}}{N}.
\end{align*}
Using this, together with \cref{lem:BDG}, we get 
\begin{align*}
\Ex\left[\norm{M_{N}^{N}(x)}^{q}\right]^{1/q}&\leq C_{q}\sqrt{\sum_{k=1}^{N}\Ex\left(\left \| \frac{F(x,\xi^{(k)})-T(x)}{N} \right \|^q \right)^{2/q}}\\
&\leq C_{q} \sqrt{N^{-2}\sum_{k=1}^{N}\Ex\left(\norm{F(x,\xi^{(k)})-T(x)}^{q}\right)^{2/q}}\\
&\leq \frac{C_{q}(\sigma(x^{\ast})+\sigma_{0}\norm{x-x^{\ast}})}{\sqrt{N}}.
\end{align*}
\end{proof}

\begin{proof}[Proof of \cref{lem:Approx}]
Observe that $M^{m_{n+1}}_{m_{n+1}}(X_{n})=W_{n+1}$ and $M^{m_{n+1}}_{m_{n+1}}(Y_{n})=Z_{n+1}$. Hence, we immediately obtain from \cref{lem:M} that 
\begin{equation}
\Ex\left[\norm{W_{n+1}}^{p'}\vert\scrF_{n}\right]^{1/p'}\leq  \frac{C_{p'}(\sigma(x^{\ast})+\sigma_{0}\norm{X_{n}-x^{\ast}})}{\sqrt{m_{n+1}}}.
\end{equation}
To prove \eqref{eq:boundZ}, we notice that \cref{lem:M} implies that 
\begin{equation}\label{eq:Z1}
\Ex\left[\norm{Z_{n+1}}^{p'}\vert\hat{\scrF}_{n}\right]^{1/p'}\leq   \frac{C_{p'}(\sigma(x^{\ast})+\sigma_{0}\norm{Y_{n}-x^{\ast}})}{\sqrt{m_{n+1}}}. 
\end{equation}
The tower property of conditional expectations (recall that $\scrF_{n}\subseteq\hat{\scrF}_{n}$) gives 
\begin{align*}
\Ex\left[\norm{Z_{n+1}}^{p'}\vert\scrF_{n}\right]&=\Ex\left\{\Ex[\norm{Z_{n+1}}^{p'}\vert\hat{\scrF}_{n}]\vert\scrF_{n}\right\}\\
&\leq\left(\frac{C_{p'}}{\sqrt{m_{n+1}}}\right)^{p'} \Ex\left[\left(\sigma(x^{\ast})+\sigma_{0}\norm{Y_{n}-x^{\ast}}\right)^{p'}\vert\scrF_{n}\right]. 
\end{align*}
Finally, by the Minkowski inequality, we get  
\[
\Ex\left[\norm{Z_{n+1}}^{p'}\vert\scrF_{n}\right]^{1/p'}\leq \frac{C_{p'}}{\sqrt{m_{n+1}}} \left(\sigma(x^{\ast})+\sigma_{0}\Ex\left[\norm{Y_{n}-x^{\ast}}^{p'}\vert\scrF_{n}\right]^{1/p'}\right),
\]
and our proof is complete.
\end{proof}

\bibliographystyle{siam}
\bibliography{../bibtex/IEEEabrv,../bibtex/mybib,../bibtex/Bibliography}

\end{document}